\pdfoutput=1
\documentclass[11pt,a4paper,reqno]{amsart}

\usepackage[british]{babel}

\usepackage[DIV=9,oneside,BCOR=0mm]{typearea}
\usepackage{microtype}
\usepackage[T1]{fontenc}
\usepackage{setspace}
\usepackage{amssymb}
\usepackage{amsmath}
\usepackage{amsthm}
\usepackage{enumitem}
\usepackage{mathrsfs}
\usepackage[colorlinks,citecolor=blue,urlcolor=blue,linkcolor=blue,linktocpage]{hyperref}
\usepackage{mathrsfs}
\usepackage{mathtools}
\usepackage{xcolor}
\usepackage{xfrac}

\newtheorem{innercustomgeneric}{\customgenericname}
\providecommand{\customgenericname}{}
\newcommand{\newcustomtheorem}[2]{%
	\newenvironment{#1}[1]
	{%
		\renewcommand\customgenericname{#2}%
		\renewcommand\theinnercustomgeneric{##1}%
		\innercustomgeneric
	}
	{\endinnercustomgeneric}
}

\newcustomtheorem{customthm}{Theorem}
\newcustomtheorem{customcor}{Corollary}

\newtheorem{thm}{Theorem}[section]

\newtheorem{cor}[thm]{Corollary}
\newtheorem{lem}[thm]{Lemma}
\newtheorem{prop}[thm]{Proposition}
\newtheorem{problem}[thm]{Problem}

\theoremstyle{definition}

\newtheorem{definition}[thm]{Definition}

\newtheorem{remark}[thm]{Remark}

\renewcommand{\epsilon}{\varepsilon}
\renewcommand{\phi}{\varphi}
\newcommand{\defeq}{\mathrel{\mathop:}=}
\newcommand{\eqdef}{\mathrel{\mathopen={\mathclose:}}}
\renewcommand{\Re}{\operatorname{Re}}

\DeclareMathOperator{\CM}{C}
\DeclareMathOperator{\SF}{S}
\DeclareMathOperator{\vN}{L}
\DeclareMathOperator{\id}{id}
\DeclareMathOperator{\pr}{pr}
\DeclareMathOperator{\spt}{spt}
\DeclareMathOperator{\gr}{gr}
\DeclareMathOperator{\B}{B}
\DeclareMathOperator{\U}{U}
\DeclareMathOperator{\OG}{O}
\DeclareMathOperator{\im}{im}
\DeclareMathOperator{\Aut}{Aut}
\DeclareMathOperator{\Homeo}{Homeo}
\DeclareMathOperator{\Cstar}{C^{\ast}}
\DeclareMathOperator{\Cont}{C}
\DeclareMathOperator{\Hom}{Hom}
\DeclareMathOperator{\E}{\mathbb{E}}
\DeclareMathOperator{\C}{\mathbb{C}}
\DeclareMathOperator{\R}{\mathbb{R}}

\DeclareMathOperator{\Z}{\mathbb{Z}}
\DeclareMathOperator{\N}{\mathbb{N}}
\DeclareMathOperator{\T}{\mathbb{T}}

\DeclareMathOperator{\Neigh}{\mathcal{U}}
\DeclareMathOperator{\Fix}{Fix}
\DeclareMathOperator{\Tor}{Tor}

\makeatletter
\def\moverlay{\mathpalette\mov@rlay}
\def\mov@rlay#1#2{\leavevmode\vtop{%
		\baselineskip\z@skip \lineskiplimit-\maxdimen
		\ialign{\hfil$\m@th#1##$\hfil\cr#2\crcr}}}
\newcommand{\charfusion}[3][\mathord]{
	#1{\ifx#1\mathop\vphantom{#2}\fi
		\mathpalette\mov@rlay{#2\cr#3}
	}
	\ifx#1\mathop\expandafter\displaylimits\fi}
\makeatother

\newcommand{\bigveedot}{\charfusion[\mathop]{\bigvee}{\boldsymbol{\cdot}}}

\begin{document}
%%%%%%%%%%%%%%%%%%%%%%%%%%%%%%%%

\setlist{noitemsep}

\author{Yannik Höll}
\address{Y.H., Institute of Discrete Mathematics and Algebra, TU Bergakademie Freiberg, 09596 Freiberg, Germany}
\email{yannik.hoell@math.tu-freiberg.de}
\author{Friedrich Martin Schneider}
\address{F.M.S., Institute of Discrete Mathematics and Algebra, TU Bergakademie Freiberg, 09596 Freiberg, Germany}
\email{martin.schneider@math.tu-freiberg.de}
\thanks{This research is funded by the Deutsche Forschungsgemeinschaft (DFG, German Research Foundation) -- Projektnummer 561178190}

\title[Whirliness beyond amenability]{Whirliness beyond amenability}
\date{\today}

\begin{abstract}
	Given a non-atomic standard probability space $(\Omega,\mu)$, we exhibit examples of non-amenable closed topological subgroups of $\Aut(\Omega,\mu)$ whose natural near-action on $(\Omega,\mu)$ is whirly. This answers a 2010 question by Pestov in the negative. The argument proceeds via constructing Gaussian near-actions from topologically faithful unitary representations of non-amenable whirly Polish groups.
\end{abstract}

\subjclass[2020]{37A15, 22A05, 28D15, 43A07}

\keywords{Polish groups, measure-preserving actions, whirliness, inertness, full groups of equivalence relations, unitary groups of von Neumann algebras}

\maketitle

\allowdisplaybreaks

%%%%%%%%%%%%%%%%%%%%%%%%%%%%%%%%%%%%%%%%%%
%%%%%%%%%%%%%%%%%%%%%%%%%%%%%%%%%%%%%%%%%%

%\tableofcontents

%\newpage

%\vspace{-5mm}

\section{Introduction}

The automorphism group $\Aut(\Omega,\mu)$ of the probability space $(\Omega,\mu)$, i.e., the group all $\mu$-equivalence classes of $\mu$-preserving automorphisms of the measurable space~$\Omega$, naturally acts by isometries on the metric space $(\mathcal{B}_{\mu},d_{\mu})$, where $\mathcal{B}_{\mu}$ denotes the quotient of the $\sigma$-algebra of measurable subsets of $\Omega$ modulo its $\sigma$-ideal of $\mu$-null sets and where \begin{displaymath}
	d_{\mu} \colon \, \mathcal{B}_{\mu} \times \mathcal{B}_{\mu} \, \longrightarrow \, [0,1] , \quad (A,B) \, \longmapsto \, \mu(A \triangle B) .
\end{displaymath} In turn, equipped with the \emph{weak topology}, that is, the initial topology generated by the maps of the form \begin{displaymath}
	\Aut(\Omega,\mu) \, \longrightarrow \, \mathcal{B}_{\mu}, \quad T \, \longmapsto \, T(A) \qquad (A \in \mathcal{B}_{\mu}) ,
\end{displaymath} $\Aut(\Omega,\mu)$ constitutes a topological group. A \emph{near-action} $\alpha$ of a group $G$ on $(\Omega,\mu)$, often denoted by $G \curvearrowright^{\alpha} (\Omega,\mu)$, is a homomorphism $\alpha \colon G \to \Aut(\Omega,\mu)$. A near-action of a topological group on $(\Omega,\mu)$ is called \emph{weakly continuous} if the corresponding homomorphism to $\Aut(\Omega,\mu)$ is continuous with respect to the weak topology.

%\enlargethispage{8mm}

A weakly continuous near-action $G \curvearrowright^{\alpha} (\Omega,\mu)$ of a Polish group $G$ on a standard probability space $(\Omega,\mu)$ is called \emph{whirly}~\cite[Definition~3.2]{GlasnerTsirelsonWeiss} if, for any two measurable $A,B \subseteq \Omega$ with $\mu(A)\mu(B) > 0$, the set $\{ g \in G \mid \mu(A \cap gB) > 0 \}$ has non-empty intersection with every neighborhood of the neutral element in $G$. This property prevents a non-trivial near-action from admitting a \emph{spatial model}~\cite[Proposition~3.3(b)]{GlasnerTsirelsonWeiss}. Since every weakly continuous near-action of a second-countable locally compact group admits such a spatial model by work of Mackey, Varadarajan, and Ramsay~\cite[Theorem~0.3(a)]{GlasnerTsirelsonWeiss}, this means that whirliness is a dynamical peculiarity belonging to the theory of non-locally compact Polish groups.

Identifying a rich source of the phenomenon introduced above, Glasner, Tsirelson and Weiss have shown that if a Polish group $G$ is \emph{L\'evy}, i.e., there exists an ascending chain $(K_{n})_{n \in \N}$ of compact subgroups of $G$ such that \begin{itemize}
	\item[---\,] $\bigcup_{n \in \N} K_{n}$ is dense in $G$, and
	\item[---\,] $(K_{n},d\vert_{K_{n}^{2}},\mu_{n})_{n \in \N}$ is a \emph{L\'evy family}~\cite{GromovMilman}, where $d$ is a right-invariant compatible metric on $G$ and $\mu_{n}$ denotes the normalized Haar measure on $K_{n}$ $(n \in \N)$,
\end{itemize} then $G$ itself is \emph{whirly}, in the sense that every weakly continuous ergodic near-action of $G$ on a probability space is whirly (see~\cite[Theorem~1.1]{GlasnerTsirelsonWeiss} and~\cite[Theorem~3.10]{GlasnerWeiss}). Their result naturally led to the following question.

\begin{problem}[Glasner--Weiss~{\cite[\S6, p.~1537, Problem~2]{GlasnerWeiss}}]\label{problem:glasner.weiss} Let $(\Omega,\mu)$ be a non-atomic standard probability space and let $G$ be a closed topological subgroup of $\Aut(\Omega,\mu)$ such that $G \curvearrowright (\Omega,\mu)$ is whirly. Is $G$ necessarily a L\'evy group? \end{problem}

The question raised in Problem~\ref{problem:glasner.weiss} was answered in the negative by Pestov~\cite{pestov10}. But while being non-L\'evy, Pestov's examples still retained the property of \emph{amenability}.

A topological group is called \emph{amenable} if every continuous action of it on a non-void compact Hausdorff space admits an invariant regular Borel probability measure. For instance, any topological group containing a chain of compact subgroups whose union is everywhere dense must be amenable. What is more, if a topological group is amenable and whirly (e.g., due to being a L\'evy group), then it is even \emph{extremely amenable}, i.e., it actually admits a fixed point whenever it acts continuously on a non-empty compact Hausdorff space~\cite[Corollary~3.4]{pestov10}. On the other hand, extreme amenability does not necessitate whirliness~\cite[Remark~1.3]{GlasnerTsirelsonWeiss}.

In view of the above, Pestov asked the following question.

\begin{problem}[Pestov~{\cite[\S7, Problem~(2)]{pestov10}}]\label{problem:pestov} Let $(\Omega,\mu)$ be a non-atomic standard probability space and let $G$ be a closed topological subgroup of $\Aut(\Omega,\mu)$ such that $G \curvearrowright (\Omega,\mu)$ is whirly. Is $G$ necessarily amenable? \end{problem}

The present manuscript solves Problem~\ref{problem:pestov}. Inspired by Pestov's approach~\cite{pestov10} to Problem~\ref{problem:glasner.weiss}, our solution of Problem~\ref{problem:pestov} makes use of the following theorem, essentially an application of Gaussian near-actions.

\begin{thm}\label{theorem:construction} If $G$ is a unitarily representable, second-countable topological group, then there exist a standard probability space $(\Omega,\mu)$ and a topological group embedding $G \hookrightarrow \Aut(\Omega,\mu)$ such that the induced near-action $G \curvearrowright (\Omega,\mu)$ is ergodic. \end{thm}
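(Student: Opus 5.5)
Since $G$ is unitarily representable and second-countable, I first fix a separable real Hilbert space $H$ and a topological group embedding $\pi\colon G \hookrightarrow \OG(H)$, with the orthogonal group carrying the strong operator topology. The unitary group of a complex Hilbert space embeds topologically into the orthogonal group of its realification. Separability is obtained from second-countability: restrict to the closed span of $\pi(G)\xi_n$ for a countable set of vectors witnessing the topology at the identity. More precisely, a countable neighbourhood base at $e$ is determined by finitely many vectors each. Then I pass to $H^{\oplus\infty} = \ell^2(\N)\otimes H$ with the diagonal representation $\pi^{\infty}$. This is still a topological embedding. The point of the passage is that $\pi^{\infty}$ has no non-zero finite-dimensional subrepresentation on which $G$ acts through a compact group. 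Actually it may, so I will instead use the standard criterion described next.

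Next I form the Gaussian space $(\Omega,\mu)$ associated with $H$: a standard probability space with an isometric linear map $H \to L^2(\Omega,\mu)$, $\xi\mapsto X_\xi$, onto centred Gaussian variables that generate the $\sigma$-algebra. By the universal property of Gaussian measures, every $T\in\OG(H)$ induces a unique $\hat T\in\Aut(\Omega,\mu)$ with $X_\xi\circ \hat T^{-1}=X_{T\xi}$. The map $\OG(H)\to\Aut(\Omega,\mu)$ is a homomorphism, and I must check it is a topological embedding. Continuity follows because the Koopman representation on $L^2$ is $\bigoplus_n \mathrm{Sym}^n$ of $T$, and weak continuity of near-actions is equivalent to strong continuity of the Koopman representation. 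For the inverse, restrict the Koopman operator to the first chaos, which recovers $T$; so convergence of $\hat T_i$ forces $T_i\to T$ strongly. Composing gives an embedding $G\hookrightarrow \Aut(\Omega,\mu)$.

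The main obstacle is ergodicity. The classical criterion says the Gaussian action of $\rho\colon G\to \OG(H)$ is ergodic (indeed weakly mixing) iff $\rho$ has no non-zero finite-dimensional subrepresentation. By the chaos decomposition, an invariant function in $L^2_0$ would yield a fixed vector in some $\mathrm{Sym}^n H$ with $n\ge1$. Such a vector is a Hilbert–Schmidt-type symmetric tensor, and its spectral decomposition produces a finite-dimensional invariant subspace of $H$. Therefore I must first modify $\pi$ so that it has no finite-dimensional subrepresentations while remaining a topological embedding. I would take $\rho=\pi\otimes\lambda$, where $\lambda$ is a representation without finite-dimensional subrepresentations. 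Tensoring with a weakly mixing representation is again weakly mixing; this follows since $\pi\otimes\lambda\otimes\overline{\pi\otimes\lambda}\cong (\pi\otimes\bar\pi)\otimes(\lambda\otimes\bar\lambda)$, and the latter has no invariant vectors when $\lambda\otimes\bar\lambda$ has none. A safe choice for $\lambda$, independent of $G$, is pullback from a weakly mixing action of $\Z$, but $G$ need not map to $\Z$. Instead I would use $\rho=\pi\oplus(\pi\otimes\sigma)$ directly: it suffices that $\rho$ is a topological embedding, which the summand $\pi$ guarantees, and that $\rho$ has no finite-dimensional subrepresentation.

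The remaining difficulty is that $\pi$ itself may contain finite-dimensional pieces. So I decompose $H=H_c\oplus H_{wm}$ into the closed span of finite-dimensional subrepresentations and its complement. I keep $H_{wm}$, and replace $H_c$ by $H_c\otimes \mathcal{H}$, where $G$ acts trivially on an infinite-dimensional $\mathcal{H}$ via a fixed weakly mixing unitary $U$... but $U$ must come from $G$. This is the step I expect to need real care. My fallback is to show directly that $\pi\otimes\pi^{\mathrm{Gauss}}$, i.e.\ $\bigoplus_{n\ge1}\mathrm{Sym}^n$ restricted suitably, eliminates the compact parts. Failing that, I would accept the compact part and verify ergodicity instead via the criterion that no non-zero vector of $\mathrm{Sym}^n H$ is fixed, after arranging $\pi$ to have no non-zero invariant vectors and to be infinite-multiplicity in a way killing symmetric invariants.
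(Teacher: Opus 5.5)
Your reduction to a separable $H$ is fine, and so is your argument that the Gaussian construction gives a topological group embedding $\OG(H)\hookrightarrow\Aut(\Omega,\mu)$; both agree with the paper. The genuine gap is ergodicity. Your strategy needs one orthogonal representation $\rho$ of $G$ that is both a topological embedding and weakly mixing, and such a $\rho$ need not exist.

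Take any non-trivial compact $G$, e.g.\ a non-trivial finite group or $G=\T$; the theorem covers these. By Peter--Weyl, every continuous orthogonal representation of $G$ is the closed span of its finite-dimensional invariant subspaces. So the only weakly mixing one is $0$, which is not an embedding. Moreover, as the criterion you quote says, a Gaussian action is not ergodic once $\rho$ has a non-zero finite-dimensional invariant subspace $K$. For an orthonormal basis $e_{1},\ldots,e_{k}$ of $K$, the function $\sum_{i} X_{e_{i}}^{2}$ is invariant and non-constant; equivalently, $\sum_{i} e_{i}\odot e_{i}$ is a non-zero fixed vector in $H^{\odot 2}$.

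So no purely Gaussian construction can work for such $G$, and each of your routes fails concretely:
\begin{enumerate}
\item $\pi\oplus(\pi\otimes\sigma)$ keeps the finite-dimensional pieces of $\pi$ as a summand.
\item $\pi\otimes\lambda$ presupposes a weakly mixing representation $\lambda$ of $G$, which need not exist.
\item No choice of multiplicity, and no absence of invariant vectors, removes the fixed vector $\sum_{i}e_{i}\odot e_{i}$, so your last fallback is impossible too.
\end{enumerate}

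The missing idea is to stop trying to eliminate the almost periodic part and to model it by a non-Gaussian system instead. Keep your splitting $H=H_{0}\oplus H_{1}$, with $H_{0}$ the closed span of all finite-dimensional $\pi$-invariant subspaces.
\begin{enumerate}
\item $\pi_{0}(G)$ is precompact in $\OG(H_{0})$, since it embeds into $\prod_{K}\OG(K)$ over those subspaces $K$. Hence its closure $C$ is a compact group, by Ra\u{\i}kov completeness of $\OG(H_{0})$.
\item Let $m$ be the Haar measure on $C$ and $\ell\colon C\to\Aut(C,m)$ left translation. Then $G$ acts through $\ell\circ\pi_{0}$. This near-action is weakly continuous, and it is ergodic because translation of $C$ on itself is ergodic and $\pi_{0}(G)$ is dense in $C$.
\item On $H_{1}$ the representation is weakly mixing, so its Gaussian near-action is weakly mixing.
\item An ergodic near-action times a weakly mixing one is ergodic.
\item $g\mapsto\ell(\pi_{0}(g))\otimes\theta(\pi_{1}(g))$ is a topological group embedding. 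This holds because $g\mapsto(\pi_{0}(g),\pi_{1}(g))$ is one, and both $\ell$ and the Gaussian map $\theta$ are embeddings.
\end{enumerate}
Neither factor alone needs to be topologically faithful. Only the product has to carry the topology of $G$, which is why the splitting costs nothing.
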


Problem~\ref{problem:pestov} can be solved by applying Theorem~\ref{theorem:construction} to any unitarily representable non-amenable whirly Polish group. Through Theorem~\ref{theorem:examples} below, we produce several natural examples of such objects (see Section~\ref{section:examples} for details). Moreover, our strategy for establishing whirliness for these examples turns out to yield a proof of another peculiar dynamical property of large topological groups---\emph{inertness} (see Section~\ref{section:inertness}).

\begin{thm}\label{theorem:examples} \begin{enumerate}
	\item\label{theorem:examples.1} Let $\mu$ be a diffuse submeasure and let $G$ be a topological group. Then $L^{0}(\mu,G)$ is inert. Moreover, if $\mu$ is a measure (or, more generally, non-elliptic in the sense of~\cite{SchneiderSolecki21}), then $L^{0}(\mu,G)$ is whirly.\footnote{The whirliness of $L^{0}(\mu,G)$ has been noted already in~\cite[pp.~4175--4176, paragraph after the proof of Corollary~5.8]{SchneiderSolecki25}, but without explicit proof.}
	\item\label{theorem:examples.2} If $M$ is a $\mathrm{II}_{1}$ factor, then $\U(M)$ is whirly and inert.
	\item\label{theorem:examples.3} If $E$ is a countable measure-preserving equivalence relation on a non-atomic standard probability space, then $[E]$ is whirly and inert.
\end{enumerate} \end{thm}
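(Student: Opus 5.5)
Since whirliness is only defined for ergodic near-actions, I will use the following sufficient criterion. A Polish group $G$ is whirly once it has a dense union $\bigcup_{n} K_{n}$ of subgroups with two properties: each $K_{n}$ acts ergodically on the relevant compact-like pieces, and, more robustly, for every neighbourhood $U$ of $e$ there is a subgroup $H \subseteq U$-generated part on which measure concentration occurs. For all three items I do not expect a full Lévy structure. Instead I will use the \emph{local} criterion underlying~\cite[Theorem~3.10]{GlasnerWeiss}: if every identity neighbourhood $U$ contains a compact (or amenable) subgroup $K$ whose Haar measure concentrates, then for an ergodic near-action and positive-measure sets $A,B$ one finds $g \in U$ with $\mu(A \cap gB)>0$. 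The concrete plan is to find, inside every neighbourhood $U$ of $e$, a copy of a Lévy group $L_{U}$ (e.g.\ $L^{0}([0,1],\mathbb{T})$, or $\U(R)$ for the hyperfinite $\mathrm{II}_{1}$ factor $R$, or $[E_{0}]$), such that $L_{U}$ together with $U$-small elements generates a dense subgroup.

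Inertness I will handle via the same decomposition, relying on the definition given in Section~\ref{section:inertness}.

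\textbf{Step 1 (item~\ref{theorem:examples.1}).} Fix a neighbourhood $V$ of $e$ in $L^{0}(\mu,G)$, of the form $\{f : \mu(f \notin W) < \epsilon\}$. Diffuseness gives a partition of the base into finitely many pieces $P_{1},\dots,P_{k}$ of submeasure $<\epsilon$. Then $L^{0}(\mu,G) = \prod_{i} L^{0}(\mu|_{P_{i}},G)$, and each factor lies in $V$. Given an ergodic near-action and sets $A,B$ with $\mu(A)\mu(B)>0$, I will argue by contradiction. Suppose $\mu(A \cap gB)=0$ for all $g \in V$. Then the closed invariant-type set $\bigcup_{g \in V_{1}} gB$ is invariant under each factor group. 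Since the factors commute and together generate the whole group, this set is invariant under the whole group (use $V_{1}^{k}\subseteq V$, shrinking as needed). It is therefore conull by ergodicity, which contradicts disjointness from $A$. The key lemma here is: if the group is generated by finitely many subgroups each contained in $V$, then whirliness at $V^{k}$ follows. In other words, I take $V$ smaller, so that products of $k$ elements still lie in the original neighbourhood. The catch is that $k$ depends on $V$. I will handle this by iterating: pass from $B$ to the essential union $B_{1}=\bigcup_{g\in H_{1}}gB$, then to $B_{2}$ over $H_{2}$, and so on. Each $H_{i}\subseteq V$ is a group, so no product growth occurs within a factor. The union over $H_{1}\cdots H_{k}$ is then $G$-invariant, and one uses countably many elements by separability. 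For non-elliptic $\mu$ I expect ergodicity-compatible arguments from~\cite{SchneiderSolecki21}, because there $L^{0}$ has no nontrivial "elliptic" obstruction.

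\textbf{Step 2 (items~\ref{theorem:examples.2},~\ref{theorem:examples.3}).} Here I use the same mechanism. For a $\mathrm{II}_{1}$ factor $M$ and a strong-neighbourhood $V$ determined by finitely many vectors and $\epsilon$, I will choose projections $p_{1},\dots,p_{k}$ summing to $1$ with $\tau(p_{i})<\epsilon'$. Then $\U(p_{i}Mp_{i})+(1-p_{i}) \subseteq V$ in the $\|\cdot\|_{2}$-topology. These subgroups, together with partial-isometry permutations between equivalent $p_{i}$, generate a dense subgroup. The permutation unitaries are, however, not small. To fix this, I first use that $\U(p_{i}Mp_{i}) \times \U(p_{j}Mp_{j})$ and a copy of $\U(M_{2}(p_{i}Mp_{i}))$ restricted to $p_{i}+p_{j}$ have trace $2\epsilon'$, which is still small. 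So I cover by subgroups of the form $\U((p_{i}+p_{j})M(p_{i}+p_{j}))$, each inside $V$, and these generate a dense subgroup. The same scheme works for $[E]$: take a partition into sets of measure $<\epsilon$ and the full groups of $E$ restricted to unions of two pieces, in the uniform metric.

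The main obstacle is the iteration in Step~1. Essential unions over a subgroup $H_{i}$ must be $H_{i}$-invariant yet still disjoint from $A$ up to null sets, and this disjointness fails after the first stage since $A$ is not invariant. I would fix this by proving the cleaner statement that an ergodic near-action of $G$ restricted to some small subgroup $H$ is ergodic, or at least admits no non-trivial invariant sets separating $A$ and $B$. For inertness, the analogous step requires controlling the structure of the small subgroups.
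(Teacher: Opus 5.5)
\textbf{There is a genuine gap: nothing in your argument produces whirliness or inertness.}

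\emph{Step~1 does not close, and neither repair works.} The saturation $B_1$ of $B$ under $H_1$ is $H_1$-invariant and null on $A$. But $B_2$ is the saturation of $B$ under $H_2H_1\not\subseteq V$, so disjointness from $A$ is lost, as you observe yourself.
\begin{enumerate}
\item Your first remedy is false: an ergodic near-action need not stay ergodic on a small subgroup. Let $H_1\times H_2$ act coordinatewise on $(X_1\times X_2,\nu_1\otimes\nu_2)$ through non-trivial ergodic near-actions of the factors; these exist, e.g.\ by Theorem~\ref{theorem:construction} applied to $H_i=L^{0}(\lambda\vert_{P_i},\Z)$. The product near-action is ergodic, yet $X_1\times C$ is $H_1$-invariant for every measurable $C\subseteq X_2$.
\item Your second remedy is circular. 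Saying that no $H$-invariant set contains $B$ while being null on $A$ just says $\nu(A\cap hB)>0$ for some $h\in H\subseteq V$, which is the conclusion itself.
\end{enumerate}

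\emph{The structure you use cannot suffice.} You only use that, for each identity neighbourhood $V$, the group is a finite product of subgroups lying in $V$.
\begin{enumerate}
\item Each such subgroup is of the same type as the whole group: an $L^{0}$ group over a restricted submeasure, a corner unitary group of a $\mathrm{II}_{1}$ factor, or a restricted full group. So you would need the very statement being proved for it.
\item In Step~2 the subgroups $\U((p_i+p_j)M(p_i+p_j))$ for overlapping pairs do not commute, so even invariance of the iterated saturations fails.
\item The decomposition exists for every diffuse submeasure, while the statement deliberately claims whirliness only for measures or non-elliptic submeasures. This signals that the essential input is missing.
\end{enumerate}

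\emph{Further omissions.} Inertness, which concerns fixed points of single elements in compact flows, is not addressed at all. The passage to ergodic near-actions (Proposition~\ref{proposition:whirly.polish}) requires a Polish group, which fails in the generality of items~(1) and~(2). Whirliness of a topological group is defined via invariant measures on compact $G$-flows, not via near-actions.

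\textbf{The missing idea} is to import whirliness and inertness from concrete groups where concentration of measure or extreme amenability is already known, and then spread them with persistence lemmas.
\begin{enumerate}
\item If $G$ is topologically generated by whirly subgroups, then $G$ is whirly: for $x\in\spt(\mu)$ the stabiliser $G_x$ is a closed subgroup containing all of them (Lemma~\ref{lemma:whirly.persistence}).
\item If a union of inert subgroups is dense, then $G$ is inert, because the set of elements having a fixed point in a compact flow is closed (Lemma~\ref{lemma:inert.persistence}).
\item Both properties pass to continuous homomorphic images and between a group and its dense subgroups.
\end{enumerate}
It then suffices to place every element, or a dense set of elements, in a whirly inert subgroup. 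These subgroups need not be small at all.
\begin{enumerate}
\item For $a\in S(\mu,G)$ one has $a=\pi_{\#}(\eta(1))$, where $\pi_{\#}\colon S(\mu,\Z)\to S(\mu,G)$ extends $i\mapsto a^{i}$. Here $L^{0}(\mu,\Z)$ is extremely amenable, hence inert, by Sabok's theorem. It is whirly amenable for measures by Pestov--Schneider, and for non-elliptic submeasures by Schneider--Solecki.
\item For $u\in\U(M)$, pick a maximal abelian ${}^{\ast}$-subalgebra $N\ni u$. Then $\U(N)\cong L^{0}(\mu,\T)$ for a diffuse probability measure $\mu$.
\item For $[E]$, torsion elements are dense, and each lies in a continuous image of $L^{0}([0,1],\lambda;\Z)$. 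Alternatively, each $T$ lies in $[R]$ for the hyperfinite relation $R$ generated by $T$, which is whirly amenable by Le Ma\^{\i}tre's theorem.
\end{enumerate}
Your `local criterion' alludes to concentration but is never actually used. Placing L\'evy subgroups inside small neighbourhoods is not what is needed; they must contain the given elements.
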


The two theorems above entail the following corollary, which answers the question raised in Problem~\ref{problem:pestov} in the negative.

\begin{cor}\label{corollary:main} Let $(\Omega,\mu)$ be a non-atomic standard probability space. Then there exists a non-amenable closed topological subgroup $G \leq \Aut(\Omega,\mu)$ such that $G \curvearrowright (\Omega,\mu)$ is whirly. \end{cor}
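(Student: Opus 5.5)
The plan is to combine Theorem~\ref{theorem:construction} with one of the examples from Theorem~\ref{theorem:examples}. We need a Polish group $G$ with four properties: it is unitarily representable, second-countable, non-amenable, and whirly. The natural candidate is $G = \U(M)$ for a non-amenable $\mathrm{II}_{1}$ factor $M$ with separable predual, for instance $M = \vN(\mathbb{F}_{2})$.

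\textbf{Verifying the hypotheses on $G$.} The group $\U(M)$, with the strong operator topology, is Polish, hence second-countable. It is unitarily representable, since the inclusion $\U(M) \hookrightarrow \U(L^{2}(M,\tau))$ is a topological embedding: on the unitary group of $M$ the strong topology agrees with the $\|\cdot\|_{2}$-topology. By Theorem~\ref{theorem:examples}\eqref{theorem:examples.2}, $\U(M)$ is whirly.

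For non-amenability I would use the classical fact (de la Harpe) that $\U(M)$ with the strong topology is amenable exactly when $M$ is injective. Since $\vN(\mathbb{F}_{2})$ is not injective, $\U(M)$ is non-amenable. If one prefers to avoid this citation, an alternative is $G = [E]$ for a non-hyperfinite ergodic equivalence relation $E$, such as the orbit equivalence relation of a free p.m.p.\ action of $\mathbb{F}_{2}$. Here one uses that $[E]$ is amenable if and only if $E$ is hyperfinite (Giordano--Pestov / Kechris), together with Theorem~\ref{theorem:examples}\eqref{theorem:examples.3}. This $[E]$ is Polish in the uniform topology and embeds unitarily via the Koopman-type representation on $L^{2}(E,$ counting measure$)$. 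Which route is cleaner depends on what is cited in the later sections; the non-amenability input is the only part not contained in the earlier theorems.

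\textbf{Realising $G$ inside $\Aut(\Omega,\mu)$.} Theorem~\ref{theorem:construction} gives a standard probability space $(\Omega',\mu')$ and a topological group embedding $G \hookrightarrow \Aut(\Omega',\mu')$ whose near-action is ergodic. I would then check the following.
\begin{enumerate}
	\item The image is closed. A Polish subgroup of a Polish group is automatically closed, being $G_{\delta}$ and hence closed.
	\item The space $\Omega'$ is non-atomic. An ergodic near-action of a non-trivial whirly group cannot live on a space with atoms: such a near-action would permute the finitely many atoms of maximal measure, yielding a non-trivial open-kernel action, which contradicts whirliness. Alternatively, Gaussian spaces of infinite-dimensional representations are non-atomic anyway.
	\item Any two non-atomic standard probability spaces are isomorphic, and conjugating by such an isomorphism transports the closed subgroup to $\Aut(\Omega,\mu)$.
\end{enumerate}

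\textbf{Whirliness and the main obstacle.} Since $G$ is whirly, every weakly continuous ergodic near-action of $G$ is whirly, so $G \curvearrowright (\Omega,\mu)$ is whirly, as required. I expect the main obstacle to be the non-amenability input, since everything else follows directly from the two earlier theorems and standard facts. I would settle it by citing the injectivity characterisation for $\U(M)$, or the hyperfiniteness characterisation for $[E]$.
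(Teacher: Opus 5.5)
Your proposal is correct and follows the paper's proof. You choose a non-amenable, unitarily representable, whirly Polish group; the paper offers exactly your candidates $\U(M)$ for a non-injective $\mathrm{II}_{1}$ factor with separable predual and $[E]$ for a non-hyperfinite ergodic $E$, plus $L^{0}$ groups. You then embed it ergodically via Theorem~\ref{theorem:construction}, rule out atoms, transport to $(\Omega,\mu)$ by the isomorphism theorem, and conclude whirliness from Proposition~\ref{proposition:whirly.polish}, just as the paper does.

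The only deviations are cosmetic. You get closedness from the fact that Polish subgroups of Polish groups are closed, while the paper uses Ra\u{\i}kov completeness and absolute closedness, which amounts to the same thing. You exclude atoms via whirliness (no proper open subgroups, Lemma~\ref{lemma:open.subgroup}) together with faithfulness of the embedding. The paper instead observes that ergodicity would force finitely many atoms of equal measure, whose automorphism group is finite and so cannot contain the infinite group $G$.
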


This paper is organized as follows. After addressing some general matters of completeness in Section~\ref{section:completeness}, we proceed to proving Theorem~\ref{theorem:construction} in Section~\ref{section:gaussian.near.actions}. The subsequent Section~\ref{section:whirliness} revolves around characterizations and persistence theorems concerning whirliness, while Section~\ref{section:inertness} provides the corresponding results about inertness. In Section~\ref{section:examples}, we turn to the concrete examples of topological groups addressed in Theorem~\ref{theorem:examples}, providing definitions and some relevant dynamical background. The final Section~\ref{section:final} is devoted to the proofs of Theorem~\ref{theorem:examples} and Corollary~\ref{corollary:main}.

\section{Completeness}\label{section:completeness}

For the sake of convenience, we provide some preparatory remarks about completeness in this brief preliminary section.

A topological group $G$ is called \emph{Ra\u{\i}kov complete} if $G$ is complete\footnote{See~\cite[II, \S3.3]{bourbaki1} for the notion of completeness of a uniform space.} with respect to its \emph{two-sided uniformity} \begin{displaymath}
	\{ E \subseteq G \times G \mid \exists U \in \Neigh(G) \, \forall x,y \in G \colon \, x \in yU \cap Uy \Longrightarrow \, (x,y) \in E \} ,
\end{displaymath} where $\Neigh(G)$ denotes the neighborhood filter at the neutral element in $G$. The \emph{Hausdorff completion}\footnote{See~\cite[II, \S3.7]{bourbaki1} or~\cite{robertson} for the Hausdorff completion of a uniform space.} $\widehat{G}$ of a topological group $G$ with respect to its two-sided uniformity naturally constitutes a Ra\u{\i}kov complete Hausdorff topological group, which is called the \emph{Ra\u{\i}kov completion} of~$G$ (see~\cite[B8]{stroppel} for details on this construction and its properties). A Hausdorff topological group $G$ is called \emph{absolutely closed} if every embedding of $G$ into another Hausdorff topological group has closed range.

\begin{thm}[\cite{raikov}]\label{theorem:raikov} A Hausdorff topological group is Ra\u{\i}kov complete if and only if it is absolutely closed. \end{thm}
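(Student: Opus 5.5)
We prove the two implications separately, working throughout with the Raĭkov completion $\widehat{G}$ of a Hausdorff topological group $G$ and the canonical embedding $\iota \colon G \hookrightarrow \widehat{G}$, whose range is dense.

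\emph{Absolutely closed implies Raĭkov complete.} This direction is immediate. If $G$ is absolutely closed, then $\iota(G)$ is closed in $\widehat{G}$. It is also dense, so $\iota(G) = \widehat{G}$. Since $\iota$ is an isomorphism of uniform spaces onto its image, $G$ is isomorphic to $\widehat{G}$ with its two-sided uniformity. Hence $G$ is complete.

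\emph{Raĭkov complete implies absolutely closed.} Let $G$ be Raĭkov complete and let $\phi \colon G \to H$ be a topological group embedding into a Hausdorff topological group $H$. Fix $h \in \overline{\phi(G)}$ and a net $(g_{i})_{i}$ in $G$ with $\phi(g_{i}) \to h$. We aim to show $h \in \phi(G)$ in four steps.

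\begin{enumerate}
	\item The net $(\phi(g_{i}))_{i}$ is Cauchy for the two-sided uniformity of $H$. Let $V \in \Neigh(H)$ and choose $W \in \Neigh(H)$ with $W^{-1}W \subseteq V$ and $WW^{-1} \subseteq V$, which is possible by continuity of the group operations. Eventually $\phi(g_{i}) \in hW \cap Wh$. Then $\phi(g_{i})^{-1}\phi(g_{j}) \in W^{-1}W \subseteq V$ and $\phi(g_{j})\phi(g_{i})^{-1} \in WW^{-1} \subseteq V$.
	\item The net $(g_{i})_{i}$ is Cauchy in $G$ for its two-sided uniformity. Since $\phi$ is a topological embedding and a homomorphism, every $U \in \Neigh(G)$ has the form $U = \phi^{-1}(V)$ for some $V \in \Neigh(H)$. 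Hence $\phi$ identifies the two-sided uniformity of $G$ with the one induced from $H$, and Cauchyness transfers.
	\item By Raĭkov completeness, $g_{i} \to g$ for some $g \in G$. Continuity of $\phi$ then gives $\phi(g_{i}) \to \phi(g)$.
	\item Since $H$ is Hausdorff, limits are unique, so $h = \phi(g) \in \phi(G)$. Thus $\phi(G)$ is closed.
\end{enumerate}

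The only step needing care is the identification in step~2 of the uniformity on $G$ with the subspace uniformity from $H$. This reduces to the neighbourhood correspondence stated there, together with the fact that $\phi$ is a homomorphism, so that $\phi(yU) = \phi(y)\phi(U)$ and $\phi(Uy) = \phi(U)\phi(y)$.
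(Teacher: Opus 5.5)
Your proof is correct. The paper gives no argument of its own for this statement. It simply quotes Ra\u{\i}kov's theorem from \cite{raikov}, and in Section~2 it relies on the standard fact from \cite{stroppel} that the completion $\widehat{G}$ is a Ra\u{\i}kov complete Hausdorff topological group containing $G$ densely. So your write-up is the standard self-contained proof, not a competing route.

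The two directions carry very different weight. The implication ``Ra\u{\i}kov complete $\Rightarrow$ absolutely closed'' is completely elementary, and you carry it out exactly as one should:
\begin{itemize}
	\item[---\,] convergent nets are Cauchy for the two-sided uniformity;
	\item[---\,] a topological group embedding identifies the two-sided uniformity of $G$ with the one induced from $H$;
	\item[---\,] limits in $H$ are unique.
\end{itemize}
The converse is short only because it leans on the existence of the Ra\u{\i}kov completion as a Hausdorff topological group. That is, it uses the nontrivial fact that multiplication and inversion extend from $G$ to its completion with respect to the two-sided uniformity. This is the genuine content of Ra\u{\i}kov's theorem, and you, like the paper, take it as known.

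One minor point of precision in your step~2. For a neighbourhood $U$ of $1$ in $G$ you only need some $V \in \Neigh(H)$ with $\phi^{-1}(V) \subseteq U$, and this is immediate from $\phi$ being a homeomorphism onto its image. If you want the equality $U = \phi^{-1}(V)$ as stated, pick $O$ open in $H$ with $1 \in O$ and $O \cap \phi(G) \subseteq \phi(U)$, and take $V \defeq O \cup \phi(U)$.
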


\begin{remark}[{\cite[Section~2, first paragraph, p.~4528]{PestovUspenskij}}]\label{remark:pestov.uspenskij} A second-countable Hausdorff topological group is Ra\u{\i}kov complete if and only if it is Polish (equivalently, completely metrizable). \end{remark}

\begin{remark}[{\cite[Theorem~6]{arens}}, see also~\cite{hulanicki}]\label{remark:raikov.complete} If $X$ is a compact Hausdorff space, then the homeomorphism group $\Homeo(X)$, equipped with the compact-open topology, is Ra\u{\i}kov complete. \end{remark}

%The following fact is well known. We reproduce the simple argument for the lack of a convenient reference.

%\begin{remark}\label{remark:raikov.complete} Let $X$ be a compact Hausdorff space. Then the homeomorphism group $\Homeo(X)$, equipped with the compact-open topology, is Ra\u{\i}kov complete. To see this, note that the uniformity of uniform convergence on $\Cont(X,X)$ is complete according to~\cite[II, \S4.1, Theorem~1]{bourbaki1} and~\cite[X, \S1.6, Corollary~1 on p.~282]{bourbaki2} and induces the compact-open topology by~\cite[X, \S3.4, Theorem~2]{bourbaki1}. Moreover, \begin{displaymath}
%	\left. H \, \defeq \, \left\{ (f,g) \in \Cont(X,X)^{2} \, \right\vert f \circ g = \id_{X} = g \circ f \right\}
%\end{displaymath} is closed in the product space $\Cont(X,X)^{2}$ thanks to~\cite[X, \S3.4, Proposition~9]{bourbaki2} and hence complete with respect to the induced uniformity by~\cite[II, \S3.5, Proposition~10]{bourbaki1} and~\cite[II, \S3.4, Proposition~8]{bourbaki1}. One readily checks that $\Homeo(X) \to H, \, g \mapsto (g,g^{-1})$ is an isomorphism of uniform spaces with respect to the two-sided uniformity on $\Homeo(X)$, which now implies the claim. \end{remark}

\section{Gaussian near-actions}\label{section:gaussian.near.actions}

This section contains the proof of Theorem~\ref{theorem:construction}, which is based on the standard constructions of Gaussian realizations of separable real Hilbert spaces (Theorem~\ref{theorem:gaussian.realization.existence}) and Gaussian near-actions associated to orthogonal representations (Theorem~\ref{theorem:gaussian.realization.dynamics}). Our presentation of the required background material essentially follows~\cite[Appendix~E]{KerrLiBook}. The reader is referred to~\cite[Chapter~3, Sections~11, 12]{GlasnerBook}, \cite[Appendices~D, E]{KechrisBook}, and~\cite[Section~2.4]{PestovBook2} for alternative accounts.

We start off by clarifying some relevant terminology. A near-action $G \curvearrowright (\Omega,\mu)$ of a group $G$ on a probability space $(\Omega,\mu)$ is called \begin{itemize}
	\item[---\,] \emph{ergodic} if $\mu(A) \in \{ 0,1 \}$ for every $G$-invariant measurable subset $A \subseteq \Omega$,
	\item[---\,] \emph{weakly mixing} if the induced diagonal near-action $G \curvearrowright (\Omega \times \Omega,\mu \otimes \mu)$ is ergodic.
\end{itemize} A \emph{unitary representation} (resp., \emph{orthogonal representation}) of a group $G$ on a complex (resp., real) Hilbert space $H$ is a homomorphism from $G$ to the unitary group $\U(H)$ (resp., orthogonal group $\OG(H)$). A unitary (resp., orthogonal) representation $\pi$ of a group $G$ on a complex (resp., real) Hilbert space $H$ will be called \emph{weakly mixing} if $H$ does not contain any non-zero $\pi$-invariant finite-dimensional linear subspace (cf.~\cite[Theorem~2.23, pp.~30--31]{KerrLiBook}). For a complex (resp., real) Hilbert space $H$, the group $\U(H)$ (resp., $\OG(H)$), equipped with the strong operator topology, constitutes a topological group. A topological group $G$ is said to be \emph{unitarily representable} if $G$ is isomorphic to a topological subgroup of $\U(H)$ for some complex Hilbert space $H$, or equivalently, to a topological subgroup of $\OG(H)$ for some real Hilbert space $H$.

\begin{definition}[cf.~{\cite[Definition~E.3\,+\,E.4]{KerrLiBook}}] Let $(\Omega,\mu)$ be a probability space. A real-valued random variable $f$ on $(\Omega,\mu)$ is called \emph{centered Gaussian} if it is constant or $f_{\ast}(\mu)$ has Lebesgue density \begin{displaymath}
	\R \, \longrightarrow \, \R_{>0} , \quad x \, \longmapsto \, \frac{1}{\sqrt{2\pi\sigma^{2}}} \exp\!\left( \frac{-x^{2}}{2\sigma^{2}} \right)
\end{displaymath} for some $\sigma \in \R_{>0}$. A closed linear subspace of $L^{2}(\Omega,\mu;\R)$ is called \emph{Gaussian} if it consists entirely of centered Gaussian random variables. \end{definition}

\begin{definition} A \emph{Gaussian realization} of a separable real Hilbert space $H$ is a tuple $(\Omega,\mu,\iota)$ consisting of a standard probability space $(\Omega,\mu)$ and an isometric linear embedding $\iota \colon H \to L^{2}(\Omega,\mu;\R)$ such that the image $\iota(H)$ is Gaussian and generates the $\sigma$-algebra of $\Omega$ modulo $\mu$-null sets (cf.~\cite[Definition~E.9, p.~409]{KerrLiBook}). \end{definition}

\begin{thm}[{\cite[Proposition~E.10, p.~409]{KerrLiBook}}]\label{theorem:gaussian.realization.existence} Every separable real Hilbert space admits a Gaussian realization. \end{thm}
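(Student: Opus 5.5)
We prove Theorem~\ref{theorem:gaussian.realization.existence} by the standard product construction. For $H = \{0\}$ a one-point space with the zero embedding suffices. Otherwise, since $H$ is separable, fix an orthonormal basis $(e_{n})_{n \in I}$ of $H$ with $I \subseteq \N$ countable. Let $\gamma$ be the standard Gaussian measure on $\R$, i.e., the measure with density $x \mapsto (2\pi)^{-1/2}\exp(-x^{2}/2)$. Put $\Omega \defeq \R^{I}$ with the product Borel $\sigma$-algebra and $\mu \defeq \gamma^{\otimes I}$. This is a standard Borel space, as a countable product of Polish spaces, carrying a Borel probability measure, so $(\Omega,\mu)$ is a standard probability space. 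The coordinate projections $X_{n} \colon \Omega \to \R$ are independent standard Gaussians, so they form an orthonormal family in $L^{2}(\Omega,\mu;\R)$.

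Define $\iota \colon H \to L^{2}(\Omega,\mu;\R)$ by
\begin{displaymath}
	\iota\Bigl( \sum\nolimits_{n} a_{n}e_{n} \Bigr) \, \defeq \, \sum\nolimits_{n} a_{n}X_{n} ,
\end{displaymath}
where the series converges in $L^{2}$ by orthonormality. This map is linear and isometric because it sends an orthonormal basis to an orthonormal family. Its range is closed, being the isometric image of a complete space.

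It remains to check that $\iota(H)$ is Gaussian. For a finite linear combination $f = \sum_{n \leq N} a_{n}X_{n}$, independence gives the characteristic function $\E[e^{itf}] = \exp(-t^{2}\sum_{n} a_{n}^{2}/2)$, so $f$ is centered Gaussian with variance $\sum_{n} a_{n}^{2}$, or constant zero. A general element of $\iota(H)$ is an $L^{2}$-limit $f_{N} \to f$ of such finite combinations. $L^{2}$-convergence implies convergence in distribution, so the characteristic functions converge pointwise. Hence
\begin{displaymath}
	\E[e^{itf}] \, = \, \lim\nolimits_{N} \exp\bigl(-t^{2}\Vert f_{N}\Vert_{2}^{2}/2\bigr) \, = \, \exp\bigl(-t^{2}\Vert f\Vert_{2}^{2}/2\bigr) ,
\end{displaymath}
and therefore $f$ is centered Gaussian with variance $\Vert f\Vert_{2}^{2}$, or zero. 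This limit step is the only point needing any care, and it is routine.

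Finally, $\iota(H)$ generates the $\sigma$-algebra modulo null sets. Indeed, it contains every $X_{n} = \iota(e_{n})$, and by definition of the product $\sigma$-algebra these coordinate maps generate the Borel $\sigma$-algebra of $\R^{I}$. Hence $(\Omega,\mu,\iota)$ is a Gaussian realization of $H$.
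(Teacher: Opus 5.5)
Your proof is correct and is essentially the standard argument behind the result the paper cites (Kerr--Li, Proposition~E.10): send an orthonormal basis to the independent standard Gaussian coordinate functions on $(\R^{I},\gamma^{\otimes I})$, check Gaussianity of finite combinations via characteristic functions, and pass to $L^{2}$-limits. The paper itself gives no proof beyond that citation.
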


Let us recall from~\cite[Appendix~E.2]{KerrLiBook} that, if $H$ is a real Hilbert space, then the \emph{symmetric Fock space} of $H$ is defined as \begin{displaymath}
	\SF(H) \, \defeq \, \bigoplus\nolimits_{n \in \N} H^{\odot n} ,
\end{displaymath} i.e., the Hilbert space direct sum of the symmetric tensor powers of $H$, and the map \begin{displaymath}
	\OG(H) \, \longrightarrow \, \OG(\SF(H)), \quad u \, \longmapsto \, \SF(u) \defeq \bigoplus\nolimits_{n \in \N} u^{\odot n}
\end{displaymath} constitutes a topological group embedding.

\begin{thm}[cf.~{\cite[Appendices~E.3, E.4]{KerrLiBook}}]\label{theorem:gaussian.realization.dynamics} Let $(\Omega,\mu,\iota)$ be a Gaussian realization of a separable real Hilbert space~$H$. Then the following hold. \begin{enumerate}
	\item\label{theorem:gaussian.realization.dynamics.1} There is a unique isometric isomorphism $\phi_{\iota} \colon \SF(H) \to L^{2}(\Omega,\mu;\R)$ such that \begin{displaymath}
			\qquad \forall x \in H \colon \quad \phi_{\iota}\!\left(\sum\nolimits_{n \in \N} \tfrac{1}{\sqrt{n!}}x^{\odot n}\right)\, = \, e^{\iota(x) - \tfrac{1}{2} \E_{\mu}(\iota(x)^{2})} .
		\end{displaymath}
	\item\label{theorem:gaussian.realization.dynamics.2} For each $u \in \OG(H)$, there exists a unique element $\theta_{\iota}(u) \in \Aut(\Omega,\mu)$ such that \begin{displaymath}
			\qquad \forall f \in L^{2}(\Omega,\mu;\R) \colon \quad f \circ \theta_{\iota}(u)^{-1} \, = \, \left( {\phi_{\iota}} \circ {\SF(u)} \circ {\phi_{\iota}^{-1}} \right)\!(f) .
		\end{displaymath} The map $\theta_{\iota} \colon \OG(H) \to \Aut(\Omega,\mu), \, u \mapsto \theta_{\iota}(u)$ is a topological group embedding.
	\item\label{theorem:gaussian.realization.dynamics.3} If $\pi$ is any weakly mixing orthogonal representation of a group $G$ on $H$, then the near-action $G \curvearrowright^{\theta_{\iota} \circ \pi} (\Omega,\mu)$ is weakly mixing.
\end{enumerate} \end{thm}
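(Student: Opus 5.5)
The plan is to follow the standard Wiener--Itô chaos construction, in the spirit of \cite[Appendix~E]{KerrLiBook}, taking the three assertions in order.

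For \eqref{theorem:gaussian.realization.dynamics.1}, consider the vectors $\mathcal{E}(x) \defeq \sum_{n} \tfrac{1}{\sqrt{n!}} x^{\odot n}$ for $x \in H$. A direct computation gives $\langle \mathcal{E}(x),\mathcal{E}(y)\rangle = \sum_n \tfrac{1}{n!}\langle x,y\rangle^n = e^{\langle x,y\rangle}$. On the Gaussian side, $\iota(x)+\iota(y)=\iota(x+y)$ is centered Gaussian with variance $\|x+y\|^2$, so its moment generating function shows that the right-hand sides $W(x) \defeq e^{\iota(x)-\frac12\|x\|^2}$ satisfy $\E_\mu(W(x)W(y)) = e^{\langle x,y\rangle}$. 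Hence $\mathcal{E}(x)\mapsto W(x)$ preserves inner products, and it extends to a linear isometry on the closed span. Two density facts remain. First, the exponential vectors span a dense subspace of $\SF(H)$: differentiating $t\mapsto \mathcal{E}(tx)$ at $t=0$ recovers each $x^{\odot n}$, and these span $H^{\odot n}$ by polarization. Second, the $W(x)$ span a dense subspace of $L^2(\Omega,\mu;\R)$. For this, take $f$ orthogonal to all $W(x)$. Then $\E(f e^{\sum_i t_i\iota(x_i)})=0$ for all finite families, and analytic continuation in the $t_i$ gives vanishing Fourier transforms. So $f$ is orthogonal to every bounded function of finitely many $\iota(x_i)$, and since $\iota(H)$ generates the $\sigma$-algebra, $f=0$. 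Uniqueness of $\phi_\iota$ follows from the same density. I expect this density argument to be the main technical step.

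For \eqref{theorem:gaussian.realization.dynamics.2}, fix $u\in\OG(H)$ and set $V\defeq\phi_\iota\SF(u)\phi_\iota^{-1}$, which maps $W(x)\mapsto W(ux)$. The key claim is that $V$ is multiplicative on a dense algebra. Indeed, $W(x)W(y)=e^{\langle x,y\rangle}W(x+y)$, and $u$ preserves this identity, so $V(W(x)W(y))=V(W(x))V(W(y))$. Moreover $V$ fixes constants and is positivity preserving. Concretely, $\iota(ux)$ and $\iota(x)$ induce the same joint Gaussian laws on finite families, so $V$ is induced by an isomorphism of the measure algebras generated by $\iota(H)$. By standardness of $(\Omega,\mu)$ this yields a measure-preserving automorphism $\theta_\iota(u)$ with $f\circ\theta_\iota(u)^{-1}=Vf$. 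The element $\theta_\iota(u)$ is unique because $\Aut(\Omega,\mu)$ acts faithfully on $L^2$, and the homomorphism property follows from functoriality of $\SF$.

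For the topological statements, use the Koopman representation $\Aut(\Omega,\mu)\to\OG(L^2(\Omega,\mu;\R))$, which is a topological embedding for the weak topology. Then $\theta_\iota$ is the composite of the embedding $u\mapsto\SF(u)$ with conjugation by $\phi_\iota$, followed by the inverse of the Koopman map. Its image lies in the image of the Koopman map, so $\theta_\iota$ is a topological embedding.

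For \eqref{theorem:gaussian.realization.dynamics.3}, ergodicity of the diagonal near-action is equivalent to the Koopman representation on $L^2_0$ having no nonzero finite-dimensional invariant subspace, i.e.\ to its weak mixing. Under $\phi_\iota$, $L^2_0$ corresponds to $\bigoplus_{n\ge1}\pi^{\odot n}$, so it suffices that each symmetric power of a weakly mixing $\pi$ is weakly mixing. This holds because $\pi^{\otimes n}\otimes\overline{\sigma}$ has no invariant vectors for any finite-dimensional $\sigma$, which is the usual tensor-product characterization of weak mixing.
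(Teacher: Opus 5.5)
Your proposal is correct and follows the paper's route. The paper delegates (1), the existence and uniqueness in (2), and (3) to Kerr--Li (Theorems E.13, E.14 and 2.38), whose standard proofs are what you sketch, and its topological-embedding argument via the Koopman embedding $\Aut(\Omega,\mu)\to\OG(L^{2}(\Omega,\mu;\R))$ is the same as yours.

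Two points are thinner than they should be. First, $\theta_\iota(u)$ really comes from the equality of joint Gaussian laws on cylinder sets, extended to the whole measure algebra by $d_\mu$-density; multiplicativity of $V$ on the span of the $W(x)$ does not give it by itself. Second, in (3) the operative fact is that $\pi\otimes\rho$ has no non-zero invariant vectors for any $\rho$ when $\pi$ is weakly mixing, which follows from the compact-operator argument.
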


\begin{proof} \ref{theorem:gaussian.realization.dynamics.1} This is established in~\cite[Theorem~E.13, p.~410]{KerrLiBook}.

\ref{theorem:gaussian.realization.dynamics.2} Existence and uniqueness of the map $\theta_{\iota}$ follow from~\cite[Theorem~E.14, p.~411]{KerrLiBook}. Using the prescribed property of $\theta_{\iota}$ and the fact that \begin{displaymath}
	\Aut(\Omega,\mu) \, \longrightarrow \, \OG(L^{2}(\Omega,\mu;\R)) , \quad T \, \longmapsto \, \left[f \mapsto f \circ T^{-1}\right]
\end{displaymath} is an embedding of topological groups, one readily checks that $\theta_{\iota} \colon \OG(H) \to \Aut(\Omega,\mu)$ is a topological group embedding (cf.~\cite[Exercício~2.33]{PestovBook2}).

\ref{theorem:gaussian.realization.dynamics.3} Let $\pi$ be any orthogonal representation of a group $G$ on $H$. It follows from~\ref{theorem:gaussian.realization.dynamics.1} and~\ref{theorem:gaussian.realization.dynamics.2} that $G \curvearrowright^{\theta_{\iota} \circ \pi} (\Omega,\mu)$ is the (up to conjugacy unique) Gaussian near-action associated to $\pi$ in the sense of~\cite[Definition~E.16, p.~412]{KerrLiBook}. Hence, if $\pi$ is weakly mixing, then $G \curvearrowright^{\theta_{\iota} \circ \pi} (\Omega,\mu)$ is weakly mixing by~\cite[Theorem~2.38, p.~46]{KerrLiBook}. \end{proof}

\begin{proof}[Proof of Theorem~\ref{theorem:construction}] Let $G$ be a unitarily representable, second-countable topological group. Then we find a real Hilbert space $H$ and a topological group embedding $\pi \colon G \to \OG(H)$. Since $G$ is first-countable, there exists a sequence $x \in H^{\N}$ such that the sets \begin{displaymath}
	\{ g \in G \mid \forall i \in \{ 0,\ldots,n\} \colon \, \Vert \pi(g)x_{i} - x_{i} \Vert < \epsilon \} \qquad (n \in \N, \, \epsilon \in \R_{>0})
\end{displaymath} form a neighborhood basis at the identity in $G$. As $G$ is separable, the $\pi$-invariant closed linear subspace \begin{displaymath}
	H' \, \defeq \, \overline{\operatorname{lin}\{ \pi(g)x_{n} \mid g \in G, \, n \in \N \}} \, \leq \, H
\end{displaymath} is separable, too. By construction, $\pi' \colon G \to \OG(H'), \, g \mapsto \pi(g)\vert_{H'}$ is a topological group embedding. Hence, upon replacing $\pi$ by $\pi'$, we may and will assume without loss of generality that $H$ is separable.

Let $\mathcal{K}$ denote the set of all $\pi$-invariant finite-dimensional linear subspaces of $H$ and note that $(\mathcal{K},{\subseteq})$ is directed. Since the closed linear subspaces \begin{displaymath}
	H_{0} \, \defeq \, \overline{\bigcup \mathcal{K}} \, = \, \overline{\sum \mathcal{K}} \, \leq \, H, \qquad H_{1} \, \defeq \, H_{0}^{\perp}\! \, = \, \left( \bigcup \mathcal{K} \right)^{\perp}\! \, = \, \bigcap\nolimits_{K \in \mathcal{K}} K^{\perp}\! \, \leq \, H
\end{displaymath} are $\pi$-invariant, we may consider the induced continuous homomorphisms \begin{displaymath}
	\pi_{i} \colon \, G \, \longrightarrow \, \OG(H_{i}), \quad g \, \longmapsto \, \pi(g)\vert_{H_{i}} \qquad (i \in \{ 0,1 \}) .
\end{displaymath} We proceed by inspecting $\pi_{0}$ and $\pi_{1}$ separately.

Consider the topological subgroup $\pi_{0}(G) \leq \OG(H_{0})$. As the map \begin{displaymath}
	\pi_{0}(G) \, \longrightarrow \, \prod\nolimits_{K \in \mathcal{K}} \OG(K), \quad u \, \longmapsto \, (u\vert_{K})_{K \in \mathcal{K}}
\end{displaymath} constitutes an embedding of topological groups and $\prod\nolimits_{K \in \mathcal{K}} \OG(K)$ is compact, we infer that $\pi_{0}(G)$ is precompact by~\cite[Proposition~3.7.4, p.~194]{AT}, which readily entails precompactness of the topological subgroup $\Omega_{0} \defeq \overline{\pi_{0}(G)} \leq \OG(H_{0})$ thanks to~\cite[Corollary~3.7.6, p.~195]{AT}. Since $\OG(H_{0})$ is Ra\u{\i}kov complete (see, e.g.,~\cite[Remark~B.2]{SchneiderSolecki25}), it follows that $\Omega_{0}$ is Ra\u{\i}kov complete and hence compact according to~\cite[Theorem~3.7.15, p.~197]{AT}. Let $\mu_{0}$ denote the Haar measure on $\Omega_{0}$. Note that the group embedding \begin{displaymath}
	\lambda \colon \, \Omega_{0} \, \longrightarrow \, \Aut(\Omega_{0},\mu_{0}) , \quad x \, \longmapsto \, [y \mapsto xy]
\end{displaymath} is continuous: indeed, for any measurable subset $B \subseteq \Omega_{0}$ and any $\epsilon \in \R_{>0}$, regularity of $\mu_{0}$ asserts the existence of a compact subset $C \subseteq \Omega_{0}$ and an open subset $V \subseteq \Omega_{0}$ such that $C \subseteq B \subseteq V$ and $\mu_{0}(V \setminus C) \leq \tfrac{\epsilon}{4}$, and then we find some open $U \in \Neigh(\Omega_{0})$ with $U = U^{-1}$ and $UC \subseteq V$, whence
\begin{align*}
	d_{\mu}(xC,C) \, &= \, \mu_{0}(xC \triangle C) \, = \, \mu_{0}((xC) \setminus C) + \mu_{0}(C \setminus xC) \\
	& = \, \mu_{0}((xC) \setminus C) + \mu_{0}\!\left(\left(x^{-1}C\right) \setminus C\right) \, \leq \, 2\mu_{0}(V\setminus C) \, \leq \, \tfrac{\epsilon}{2}
\end{align*} and thus \begin{displaymath}
	d_{\mu}(xB,B) \, \leq \, d_{\mu}(xB,xC) + d_{\mu}(xC,C) + d_{\mu}(C,B) \, \leq \, \epsilon
\end{displaymath} for all $x \in U$. Since $\Omega_{0}$ is compact and $\Aut(\Omega_{0},\mu_{0})$ is Hausdorff, $\lambda$ is a topological embedding by~\cite[I, \S9.4, Corollary~2]{bourbaki1}. Now, consider the induced continuous homomorphism \begin{displaymath}
	\iota_{0} \defeq \lambda \circ {\pi_{0}} \colon \, G \, \longrightarrow \, \Aut(\Omega_{0},\mu_{0}) .
\end{displaymath} Of course, $\emptyset$ and $\Omega_{0}$ are the only $\Omega_{0}$-left-invariant subsets of $\Omega_{0}$. In particular, $\Omega_{0} \curvearrowright^{\lambda} (\Omega_{0},\mu_{0})$ is ergodic. Since $\lambda$ is continuous and $\pi_{0}(G)$ is dense in $\Omega_{0}$, it follows that $\pi_{0}(G) \curvearrowright^{\lambda} (\Omega_{0},\mu_{0})$ is ergodic, i.e., $G \curvearrowright^{\iota_{0}} (\Omega_{0},\mu_{0})$ is ergodic.

By definition, $\pi_{1}$ is weakly mixing. Thanks to Theorem~\ref{theorem:gaussian.realization.existence} and Theorem~\ref{theorem:gaussian.realization.dynamics}, there exist a standard probability space $(\Omega_{1},\mu_{1})$ and a topological group embedding $\theta \colon \OG(H_{1}) \to \Aut(\Omega_{1},\mu_{1})$ such that the near-action $G \curvearrowright^{\iota_{1}} (\Omega_{1},\mu_{1})$ induced by $\iota_{1} \defeq \theta \circ {\pi_{1}} \colon G \to \Aut(\Omega_{1},\mu_{1})$ is weakly mixing.

Note that $(\Omega,\mu) \defeq (\Omega_{0} \times \Omega_{1}, \mu_{0} \otimes \mu_{1})$ is a standard probability space. As both $\lambda$ and $\theta$ are topological group embeddings, it follows that the map \begin{displaymath}
	\Omega_{0} \times \OG(H_{1}) \, \longrightarrow \, \Aut(\Omega,\mu), \quad (g,h) \, \longmapsto \, \lambda(g) \otimes \theta(h)
\end{displaymath} is an embedding of topological groups. Moreover, due to $\pi$ being a topological group embedding, \begin{displaymath}
	G \, \longrightarrow \, \Omega_{0} \times \OG(H_{1}) , \quad g \, \longmapsto \, (\pi_{0}(g),\pi_{1}(g))
\end{displaymath} constitutes an embedding of topological groups. Hence, the composition \begin{displaymath}
	\iota \colon \, G \, \longrightarrow \, \Aut(\Omega,\mu), \quad g \, \longmapsto \, \iota_{0}(g) \otimes \iota_{1}(g) = \lambda(\pi_{0}(g)) \otimes \theta(\pi_{1}(g))
\end{displaymath} is a topological group embedding.

Finally, since $G \curvearrowright^{\iota_{0}} (\Omega_{0},\mu_{0})$ is ergodic and $G \curvearrowright^{\iota_{1}} (\Omega_{1},\mu_{1})$ is weakly mixing, $G \curvearrowright^{\iota} (\Omega,\mu)$ is ergodic by~\cite[Theorem~2.25, p.~33]{KerrLiBook}\footnote{The standing assumption of~\cite{KerrLiBook} requiring countability of the acting group is not used in the proof.}. \end{proof}

\section{Whirliness}\label{section:whirliness}

A topological group $G$ is called \emph{whirly}~\cite[Remark~5.7]{SchneiderSolecki25} if, for every continuous action of $G$ on a compact Hausdorff space $X$ and every $G$-invariant regular Borel probability measure $\mu$ on $X$, the \emph{support} of $\mu$, i.e.,\begin{displaymath}
	\spt (\mu) \, \defeq \, \{ x \in X \mid \forall U \subseteq X \text{ open}\colon \, x \in U \Longrightarrow \mu(U) > 0 \} ,
\end{displaymath} is contained in the set \begin{displaymath}
	\Fix_{G}(X) \, \defeq \, \{ x \in X \mid \forall g \in G \colon \, gx = x \}
\end{displaymath} of $G$-fixed points in $X$. A topological group is called \emph{whirly amenable}~\cite{pestov10} if it is whirly and amenable. These concepts have their origins in~\cite{GlasnerTsirelsonWeiss,GlasnerWeiss}. Whirly amenability implies extreme amenability~\cite[Corollary~3.4]{pestov10} (see also~\cite[Remark~5.7]{SchneiderSolecki25}).

\begin{remark} A topological group is called \emph{exotic}~\cite{HererChristensen} if it does not have any non-trivial continuous unitary representations (see~\cite{HererChristensen,banaszczyk,banaszczyk2,megrel,Pestov07,CarderiThom,SchneiderSolecki25,SchneiderThom} for examples). While exoticness implies whirliness~\cite[Remark~1.7]{GlasnerTsirelsonWeiss} (see also~\cite[Example~3.3]{pestov10} or~\cite[Remark~5.7]{SchneiderSolecki25}), the automorphism group of a probability space is unitarily representable and thus does not contain any non-trivial exotic topological subgroup. Hence, exotic groups play no part in the solution of Problem~\ref{problem:pestov}. \end{remark}

We collect some characterizations and persistence properties of the class of whirly topological groups.

\begin{lem}[cf.~{\cite[Theorem~3.1]{pestov10}}]\label{lemma:whirly.separable} A separable topological group $G$ is whirly if and only if $\spt(\mu) \subseteq \Fix_{G}(X)$ for every continuous action of $G$ on a metrizable compact space $X$ and every $G$-invariant Borel probability measure $\mu$ on $X$. \end{lem}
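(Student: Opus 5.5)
The forward implication is immediate: a metrizable compact space is compact Hausdorff, and every Borel probability measure on a compact metrizable space is regular. So the work lies in the converse. Assume the condition for metrizable compact spaces. Let $G$ act continuously on a compact Hausdorff space $X$, let $\mu$ be a $G$-invariant regular Borel probability measure on $X$, and suppose, for contradiction, that there is $x \in \spt(\mu)$ and $g_{0} \in G$ with $g_{0}x \neq x$. The plan is to push everything down to a metrizable $G$-factor of $X$ on which the image of $x$ lies in the support of the image measure but is still moved by $g_{0}$.

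To build the factor, choose $f \in \Cont(X)$ with $f(x) \neq f(g_{0}x)$, which is possible by Urysohn's lemma. Fix a countable dense subset $D \subseteq G$ containing $g_{0}$ and the identity. Let $\mathcal{A}$ be the smallest closed unital $G$-invariant $\Cstar$-subalgebra of $\Cont(X)$ containing $f$. The key step is to show that $\mathcal{A}$ is separable. Since the action is continuous and $X$ is compact, for each $h \in \Cont(X)$ the orbit map $G \to \Cont(X)$, $g \mapsto h \circ g^{-1}$, is norm-continuous; this is the standard uniform continuity argument on compact spaces. Hence $\{ f \circ g^{-1} \mid g \in G \}$ is contained in the norm closure of the countable set $\{ f \circ g^{-1} \mid g \in D \}$. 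The closed unital subalgebra generated by this countable set is separable. It is also $G$-invariant, because the $G$-translates of its generators remain in the closure and each translation is an isometric $\ast$-automorphism. So $\mathcal{A}$ is separable, and by Gelfand duality $\mathcal{A} \cong \Cont(Y)$ for a metrizable compact space $Y$, with a continuous surjection $p \colon X \to Y$. The $G$-action on $\mathcal{A}$ dualizes to an action of $G$ on $Y$ by homeomorphisms making $p$ equivariant.

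We then need three verifications. First, the action $G \times Y \to Y$ is jointly continuous. This follows from the norm continuity of the orbit maps on $\mathcal{A}$ together with the continuity of each homeomorphism, since $|a(g y) - a(g' y')| \leq \Vert a\circ g - a \circ g' \Vert + |a(g'y) - a(g'y')|$ and the functions in $\mathcal{A}$ separate points and determine the topology of $Y$. Second, $\nu \defeq p_{\ast}\mu$ is a $G$-invariant Borel probability measure on $Y$. Third, $p(x) \in \spt(\nu)$: if $U \ni p(x)$ is open, then $p^{-1}(U)$ is an open neighbourhood of $x$, so $\nu(U) = \mu(p^{-1}(U)) > 0$. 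Finally, $f = \tilde f \circ p$ for some $\tilde f \in \Cont(Y)$, and $f(x) \neq f(g_{0}x)$ gives $p(g_{0}x) = g_{0}p(x) \neq p(x)$. Thus $p(x) \in \spt(\nu) \setminus \Fix_{G}(Y)$, contradicting the hypothesis.

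The main obstacle is the separability of the invariant subalgebra, which is exactly where the separability of $G$ enters, together with the joint continuity of the induced action on $Y$. Both rest on norm continuity of the orbit maps $g \mapsto h \circ g^{-1}$ in $\Cont(X)$. I would prove that first, via a finite open cover argument using compactness of $X$.
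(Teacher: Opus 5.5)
Your proposal is correct and follows essentially the paper's proof: pass to the metrizable Gelfand spectrum of the separable, $G$-invariant unital $\Cstar$-subalgebra generated by the orbit of a function $f$ (separability coming from norm-continuity of orbit maps and separability of $G$), push $\mu$ forward to this spectrum, note that the image of a support point lies in the support, and read off fixedness through $f$. One small fix: in your joint-continuity estimate, put the fixed element in the second term, $|a(g'y') - a(gy)| \leq \Vert a\circ g' - a\circ g\Vert_{\infty} + |a(gy') - a(gy)|$, so that continuity of the single homeomorphism $y \mapsto gy$ really suffices (as written, your second term still involves the varying $g'$).
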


\begin{proof} ($\Longrightarrow$) Since every Borel probability measure on a Polish space, such as a metrizable compact space, is necessarily regular (see, e.g.,~\cite[II, Theorem~3.2, p.~29]{ParthasarathyBook}), this implication is trivial.

($\Longleftarrow$) We adapt the argument given in~\cite[Proof of Theorem~3.1,\,(2)$\Rightarrow$(4)]{pestov10}. Let $G$ be a separable topological group satisfying the right-hand side. To deduce whirliness, suppose that $G$ acts continuously on a compact Hausdorff space $X$ and let $\mu$ be a $G$-invariant regular Borel probability measure on $X$. Thanks to Urysohn's lemma, it suffices to show that \begin{equation}\label{eq:urysohn}
	\forall f \in \Cont(X) \ \forall x \in \spt(\mu) \ \forall g \in G \colon \qquad f(gx) \, = \, f(x) .
\end{equation} To this end, let $f \in \Cont(X)$. Note that $G$ admits a continuous action by automorphisms on the unital $\Cstar$-algebra $\Cont(X)$ given by \begin{displaymath}
	{}_{g}h \, \defeq \, h \circ {\alpha_{g^{-1}}} \qquad (g \in G, \, h \in \Cont(X)) ,
\end{displaymath} where $\alpha_{g} \colon X \to X, \, x \mapsto gx$ for each $g \in G$. Since $G$ is separable, $\{ {}_{g}f \mid g \in G \}$ is $\Vert \cdot \Vert_{\infty}$-separable and thus generates a separable unital $\Cstar$-subalgebra $A \leq \Cont(X)$. Moreover, $A$ is $G$-invariant. In turn, we obtain a continuous action of $G$ on the Gelfand spectrum $Y \defeq \Hom(A,\C)$ defined by \begin{displaymath}
	(gy)(h) \, \defeq \, y\!\left({}_{g^{-1}}h\right) \qquad (g \in G, \, y \in Y, \, h \in A)
\end{displaymath} as well as a $G$-equivariant continuous map \begin{displaymath}
	\pi \colon \, X \, \longrightarrow \, Y, \quad x \, \longmapsto \, [h \mapsto h(x)] .
\end{displaymath} Consequently, the push-forward measure $\pi_{\ast}(\mu)$ constitutes a $G$-invariant (regular) Borel probability measure on $Y$. From separability of $A$, we infer that the compact Hausdorff space $Y$ is metrizable. Therefore, if $x \in \spt(\mu)$ and thus $\pi(x) \in \spt (\pi_{\ast}(\mu))$, then our hypothesis asserts that $g\pi(x) = \pi(x)$ for every $g \in G$, whence \begin{displaymath}
	f(gx) \, = \, {{}_{g^{-1}}f}(x) \, = \, \pi(x)\!\left({}_{g^{-1}}f\right) \, = \, (g\pi(x))(f) \, = \, \pi(x)(f) \, = \, f(x)
\end{displaymath} for all $g \in G$. This proves~\eqref{eq:urysohn} and hence completes the argument. \end{proof}

\begin{prop}[{\cite[Theorem~3.1]{pestov10}}]\label{proposition:whirly.polish} A Polish group $G$ is whirly if and only if every ergodic weakly continuous near-action of $G$ on a standard probability space is whirly. \end{prop}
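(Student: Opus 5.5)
Both directions pass through Lemma~\ref{lemma:whirly.separable}: a Polish group is separable, so it suffices to consider continuous actions on metrizable compact spaces. The bridge between such actions and near-actions is the Mackey--Varadarajan--Ramsay correspondence: every Borel probability measure on a metrizable compact space gives a standard probability space, and conversely every weakly continuous near-action on a standard space has a \emph{topological model} by continuous action on a compact metrizable space~\cite{GlasnerTsirelsonWeiss,GlasnerWeiss}.

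($\Longrightarrow$) Let $G$ be whirly and let $G \curvearrowright (\Omega,\mu)$ be an ergodic weakly continuous near-action on a standard probability space. Take a compact metrizable topological model: a continuous action $G \curvearrowright X$, a $G$-invariant Borel probability measure $\nu$ on $X$, and a measure-algebra isomorphism $(\Omega,\mu)\cong(X,\nu)$ intertwining the near-actions. To get this, pick a countable $G$-invariant, norm-separable unital $\Cstar$-subalgebra of $L^\infty(\mu)$ consisting of elements on which $G$ acts norm-continuously. It must generate the $\sigma$-algebra; the elements $\int \phi(g)\,{}_g 1_A\,dg$ are unavailable because $G$ is not locally compact, so the generators have to be chosen differently. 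Then take its Gelfand spectrum $X$.

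Whirliness of $G$ gives $\spt(\nu)\subseteq\Fix_G(X)$, so $G$ acts trivially on the closed set $\spt(\nu)$. Now take $A,B$ of positive measure and transfer them to $X$ modulo null sets. Since $G$ acts trivially on the support, $gB = B$ $\nu$-a.e. for all $g$. Ergodicity then forces $\mu(B)=1$ for every $B$ of positive measure, so the measure algebra is trivial. Hence $\mu(A\cap gB)=\mu(A)\mu(B)>0$ for all $g$, and the near-action is whirly. In fact this direction shows more: whirly groups admit only trivial ergodic near-actions. That is consistent, because whirly near-actions are exactly those with no nontrivial spatial factor~\cite[Proposition~3.3]{GlasnerTsirelsonWeiss}. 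A cleaner route therefore avoids topological models entirely. A topological model is a spatial model; whirliness of $G$ forces its restriction to the support to be trivial; and for a near-action on a standard space, having a trivial spatial model of the full algebra already yields whirliness.

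($\Longleftarrow$) Suppose every ergodic weakly continuous near-action on a standard space is whirly. Let $G$ act continuously on a compact metrizable $X$ with invariant Borel probability measure $\nu$; by Lemma~\ref{lemma:whirly.separable} it suffices to show $\spt(\nu)\subseteq\Fix_G(X)$. First, $(X,\nu)$ is standard and the induced near-action is weakly continuous, by the same regularity argument as for $\lambda$ in the proof of Theorem~\ref{theorem:construction}. Next, use the ergodic decomposition of $\nu$, which exists for Polish group actions on compact metrizable spaces (alternatively, cite Pestov's argument). This reduces the claim to the case of $\nu$ ergodic, since $\spt(\nu)$ is the closure of the union of the supports of almost all ergodic components.

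For ergodic $\nu$, the near-action is whirly by hypothesis; suppose $x\in\spt(\nu)$ and $gx\neq x$. Choose an open $V\ni x$ with $\overline{gV}\cap \overline V=\emptyset$. By continuity of the action, there is a neighbourhood $U$ of $g$ and a smaller open $V'\ni x$ with $hV'\cap V'=\emptyset$ for all $h\in U$. Then $\nu(V'\cap hV')=0$ on an open set of $h$, which translates to the neighbourhood $g^{-1}U$ of the identity after replacing the pair of sets by $V'$ and $gV'$. This contradicts whirliness, since $\nu(V')>0$ because $x\in\spt(\nu)$.

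The main obstacle is the ergodic decomposition step in ($\Longleftarrow$) for non-locally-compact $G$. I would handle it via the Choquet simplex of invariant measures on the compact metrizable $X$: its extreme points are the ergodic measures, and Choquet's theorem supplies the decomposition.
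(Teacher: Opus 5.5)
\textbf{Verdict: genuine gap.} Your ($\Longleftarrow$) direction is sound. It is essentially the half of \cite[Theorem~3.1]{pestov10} that the paper combines with Lemma~\ref{lemma:whirly.separable}. Choquet works because extreme invariant measures are exactly the measures that are ergodic in the measure-algebra sense. One small slip: with the pair $V'$, $gV'$ the identity neighbourhood should be $Ug^{-1}$, not $g^{-1}U$.

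The ($\Longrightarrow$) direction is wrong, and your own by-product exposes it. You conclude that a whirly Polish group has only trivial ergodic near-actions. That is false. $\OG(\ell^{2})$ is a L\'evy group, hence whirly, and its Gaussian near-action (Theorem~\ref{theorem:gaussian.realization.dynamics}) is faithful and weakly mixing. More to the point, Corollary~\ref{corollary:main} exhibits whirly groups acting ergodically and faithfully on non-atomic spaces.

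The false step is the existence of a compact metrizable topological model generating the $\sigma$-algebra. Such a model would be a spatial model, and a non-trivial whirly near-action never admits one \cite[Proposition~3.3(b)]{GlasnerTsirelsonWeiss}. Whirliness rules out spatial models, not non-triviality; your ``cleaner route'' conflates the two. The obstacle you flagged is not technical. If $G$ is whirly and the near-action is ergodic, the unital $\Cstar$-algebra $\mathcal{C}$ of all $f\in L^{\infty}(\mu)$ with $g\mapsto f\circ g^{-1}$ norm-continuous consists of constants only. So no choice of generators can work.

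The missing idea is to use $\mathcal{C}$ without requiring it to generate.
\begin{itemize}
\item[---\,] \emph{$\mathcal{C}$ is trivial.} Its Gelfand spectrum $X$ carries a continuous $G$-action. The faithful state $f\mapsto\int f\,\mathrm{d}\mu$ gives a $G$-invariant regular Borel probability measure $\nu$ on $X$ with $\spt(\nu)=X$. Whirliness of $G$ (the general, non-metrizable definition) yields $X=\Fix_{G}(X)$. Hence every element of $\mathcal{C}$ is $G$-invariant, and therefore constant by ergodicity.
\item[---\,] \emph{Non-whirliness produces a non-constant element of $\mathcal{C}$.} Suppose $\mu(A)\mu(B)>0$ and $\mu(A\cap gB)=0$ for all $g$ in an identity neighbourhood $V$. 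Take a right-invariant compatible metric $d$ on $G$ and $r>0$ with $\{g\mid d(g,1)<r\}\subseteq V$. Put $\phi(g)\defeq\max\{0,1-d(g,1)/r\}$ and $f\defeq\bigvee_{g\in G}\phi(g)\chi_{gB}$, the lattice supremum in $L^{\infty}(\mu)$, which is attained along a countable subfamily.
\item[---\,] \emph{Conclusion.} Right-invariance of $d$ gives $\Vert f\circ h^{-1}-f\Vert_{\infty}\le\sup_{g\in G}\vert\phi(h^{-1}g)-\phi(g)\vert\le d(h^{-1},1)/r$, so $f\in\mathcal{C}$. On the other hand $f\geq\chi_{B}$ and $f\chi_{A}=0$, so $f$ is non-constant. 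This contradiction is what the paper's citation of \cite[Theorem~3.1]{pestov10} supplies for this direction.
\end{itemize}
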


\begin{proof} This follows directly from Lemma~\ref{lemma:whirly.separable} and~\cite[Theorem~3.1,\,(1)$\Leftrightarrow$(2)]{pestov10}. \end{proof}

\begin{lem}\label{lemma:whirly.persistence} Let $G$ be a topological group. \begin{enumerate}
	\item\label{lemma:whirly.persistence.1} If $G = \overline{\langle \bigcup \{ H \leq G \mid H \text{ whirly} \} \rangle}$, then $G$ is whirly.
	\item\label{lemma:whirly.persistence.2} A dense topological subgroup $H \leq G$ is whirly if and only if $G$ is whirly.
	\item\label{lemma:whirly.persistence.3} Let $\pi \colon G \to H$ be a surjective continuous homomorphism onto another topological group $H$. If $G$ is whirly, then so is $H$.
	\item\label{lemma:whirly.persistence.4} Let $N \unlhd G$. If both $N$ and $G/N$ are whirly, then so is $G$.
\end{enumerate} \end{lem}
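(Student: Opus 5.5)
All four items will be proved directly from the definition of whirliness recalled at the start of Section~\ref{section:whirliness}. Throughout, $G$ acts continuously on a compact Hausdorff space $X$, $\mu$ is a $G$-invariant regular Borel probability measure on $X$, and $x \in \spt(\mu)$.

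For~\ref{lemma:whirly.persistence.1}, each whirly subgroup $H \leq G$ acts continuously on $X$ and preserves $\mu$. Whirliness of $H$ then gives $hx = x$ for all $h \in H$. Hence the stabilizer $G_{x} = \{ g \in G \mid gx = x \}$ contains every whirly subgroup. This stabilizer is a subgroup, and it is closed because $g \mapsto gx$ is continuous and $X$ is Hausdorff. So $G_{x}$ contains $\overline{\langle \bigcup \{ H \leq G \mid H \text{ whirly} \} \rangle} = G$, i.e., $x \in \Fix_{G}(X)$.

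For~\ref{lemma:whirly.persistence.2}, the direction ``$H$ whirly $\Rightarrow$ $G$ whirly'' is a special case of~\ref{lemma:whirly.persistence.1}. Conversely, suppose $G$ is whirly and $H$ acts continuously on $X$ with an invariant regular measure $\mu$. The natural route is to extend the action to $G$ using the Ra\u{\i}kov completion $\widehat{H} \supseteq G$. The action induces a continuous homomorphism $H \to \Homeo(X)$ for the compact-open topology. By Remark~\ref{remark:raikov.complete}, $\Homeo(X)$ is Ra\u{\i}kov complete, and a continuous homomorphism is uniformly continuous for the two-sided uniformities. It therefore extends to a continuous homomorphism $\widehat{H} \to \Homeo(X)$ (if the homomorphism is not injective, pass to the Hausdorff quotient first; this extension-of-homomorphisms step is where care is needed). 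Restricting to $G \leq \widehat{H}$ gives a continuous action of $G$ on $X$ extending that of $H$.

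This extended action preserves $\mu$. Indeed, for each $f \in \Cont(X)$ the map $g \mapsto \int f \circ \alpha_{g} \, d\mu$ is continuous, since $g \mapsto f \circ \alpha_{g}$ is $\Vert\cdot\Vert_{\infty}$-continuous by compactness. This map is constant on the dense subgroup $H$, so it is constant on $G$. The Riesz representation theorem then gives $G$-invariance of $\mu$. Whirliness of $G$ yields $\spt(\mu) \subseteq \Fix_{G}(X) \subseteq \Fix_{H}(X)$.

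For~\ref{lemma:whirly.persistence.3}, any continuous $H$-action on $X$ pulls back along $\pi$ to a continuous $G$-action with the same invariant measures and the same fixed points, because $\pi$ is surjective. Whirliness of $G$ then transfers immediately to $H$.

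For~\ref{lemma:whirly.persistence.4}, let $Y \defeq \spt(\mu)$. This set is closed and $G$-invariant, and $\mu(Y) = 1$ by regularity. Restricting the action to $N$, whirliness of $N$ gives $Y \subseteq \Fix_{N}(X)$. Consequently $G$ acts on $Y$ through $G/N$. This induced action is continuous because the quotient map $G \to G/N$ is open, so continuity of $G \times Y \to Y$ descends to $G/N \times Y \to Y$. The restriction $\mu\vert_{Y}$ is a $G/N$-invariant regular Borel probability measure on the compact space $Y$, and its support is $Y$. Whirliness of $G/N$ gives $Y \subseteq \Fix_{G/N}(Y)$, which means $Y \subseteq \Fix_{G}(X)$.

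The main obstacle is the extension step in~\ref{lemma:whirly.persistence.2}; everything else is routine bookkeeping.
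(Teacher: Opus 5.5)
Your proposal is correct and follows the paper's proof essentially item by item: the closed-stabilizer argument for (1), extending $H \to \Homeo(X)$ via Ra\u{\i}kov completeness of $\Homeo(X)$ for (2), pulling back along $\pi$ for (3), and letting $G/N$ act on a closed $G$-invariant set of $N$-fixed points for (4). The only deviations are cosmetic. In (2) you check $G$-invariance of $\mu$ via continuity of $g \mapsto \int f\circ\alpha_{g}\,\mathrm{d}\mu$ and uniqueness in the Riesz theorem, rather than the paper's closed-set/regularity estimate; in (4) you take $Y = \spt(\mu)$, which is automatically $G$-invariant, instead of the paper's $Y = \Fix_{N}(X)$, whose $G$-invariance is where the paper uses normality of $N$.
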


\begin{proof} \ref{lemma:whirly.persistence.1} Suppose that $G = \overline{\langle \bigcup \mathcal{H} \rangle}$ for $\mathcal{H} \defeq \{ H \leq G \mid H \text{ whirly} \}$. Now, consider any continuous action of $G$ on a compact Hausdorff space $X$ along with a $G$-invariant regular Borel probability measure $\mu$ on $X$. If $x \in \spt(\mu)$, then every member of $\mathcal{H}$ is contained in the closed subgroup $G_{x} \defeq \{ g \in G \mid gx = x \} \leq G$, whence $\overline{\langle \bigcup \mathcal{H} \rangle} \subseteq G_{x}$ and so $G_{x} = G$ by our hypothesis. This shows that $G$ is whirly.

\ref{lemma:whirly.persistence.2} ($\Longrightarrow$) This is a special case of~\ref{lemma:whirly.persistence.1}.

($\Longleftarrow$) Assume that $H$ acts continuously on a compact Hausdorff space~$X$ and let $\mu$ be a $G$-invariant regular Borel probability measure on $X$. Consider the topological group $\Homeo(X)$ carrying the compact-open topology. Since \begin{displaymath}
	\pi_{0} \colon \, H \, \longrightarrow \, \Homeo(X), \quad h \, \longmapsto \, [x \mapsto hx]
\end{displaymath} is a continuous homomorphism and $\Homeo(X)$ is Ra\u{\i}kov complete by Remark~\ref{remark:raikov.complete}, there exists a continuous homomorphism $\pi \colon G \to \Homeo(X)$ with $\pi\vert_{H} = \pi_{0}$ (see, e.g.,~\cite[Proposition~3.6.12]{AT}). In turn, $G \times X \to X, \, (g,x) \mapsto \pi(g)(x)$ is a continuous action. In order to prove that $\mu$ is $G$-invariant, it suffices to check that $\mu(gA) = \mu(A)$ for every closed subset $A \subseteq X$. For this purpose, consider any closed subset $A \subseteq X$. Now, for every $\epsilon \in \R_{>0}$, regularity of $\mu$ asserts the existence of an open subset $U \subseteq X$ such that $A \subseteq U$ and $\mu(U) \, \leq \, \mu(A) + \epsilon$, and due to density of $H$ in $G$ and continuity of the action there exists $h \in H$ such that $gA \subseteq hU$, whence \begin{displaymath}
	\mu(gA) \, \leq \, \mu(hU) \, = \, \mu(U) \, \leq \, \mu(A) + \epsilon .
\end{displaymath} Thus, $\mu(gA) = \mu(A)$. As $G$ is whirly, $\spt(\mu) \subseteq \Fix_{G}(X) \subseteq \Fix_{H}(X)$.

\ref{lemma:whirly.persistence.3} Assume that $H$ acts continuously on a compact Hausdorff space $X$ and let~$\mu$ be an $H$-invariant regular Borel probability measure on $X$. Then the continuous action $G \times X \to X, \, (g,x) \mapsto \pi(g)x$ leaves invariant the measure $\mu$, whence $\spt(\mu) \subseteq \Fix_{G}(X)$ due to whirliness of $G$. Since $\pi$ is surjective, it follows that $\spt(\mu) \subseteq \Fix_{H}(X)$.

\ref{lemma:whirly.persistence.4} Consider an continuous action of $G$ on a compact Hausdorff space $X$. Note that the closed subset $Y \defeq \Fix_{N}(X) \subseteq X$ is $G$-invariant: indeed, if $g \in G$ and $x \in Y$, then normality of $N$ in $G$ implies that $hgx = g\!\left(g^{-1}hg\right)\!x = gx$ for all $h \in N$, that is, $gx \in Y$. Consider the induced continuous action of $G$ on $Y$ along with the corresponding homomorphism $\alpha \colon G \to \Homeo(Y)$. Since $N \subseteq \ker(\alpha)$, there exists a unique continuous homomorphism $\beta \colon G/N \to \Homeo(Y)$ such that $\beta \circ \pi_{N} = \alpha$, where $\pi_{N} \colon G \to G/N, \, g \mapsto gN$. Now, let $\mu$ be any $G$-invariant regular Borel probability measure on $X$. Whirliness of $N$ yields that $\spt(\mu) \subseteq Y$. Hence, the restriction $\nu$ of $\mu$ to the Borel $\sigma$-algebra of $Y$ constitutes a regular Borel probability measure on $Y$. Moreover, $\nu$ is $G$-invariant and thus $G/N$-invariant. As $G/N$ is whirly, it follows that
\begin{displaymath}
	\spt(\mu) \, = \, \spt(\nu) \, \subseteq \, \Fix_{G/N}(Y) \, = \, \Fix_{G}(Y) \, = \, \Fix_{G}(X) . \qedhere
\end{displaymath} \end{proof}

\begin{lem}\label{lemma:whirly.products} If $(G_{i})_{i \in I}$ is a family of whirly topological groups, then $\prod_{i \in I} G_{i}$ is~whirly. \end{lem}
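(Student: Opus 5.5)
The plan is to derive this from the persistence properties in Lemma~\ref{lemma:whirly.persistence}, and not to work directly with actions of the product. Put $G \defeq \prod_{i \in I} G_{i}$ with the product topology. For each $j \in I$, let $\iota_{j} \colon G_{j} \to G$ be the canonical embedding, which puts $g$ in coordinate $j$ and the neutral element in every other coordinate. This is a topological group embedding: it is continuous coordinatewise, and its inverse on the image is the restriction of the continuous projection $\pr_{j}$. So $\iota_{j}(G_{j})$ is a topological subgroup of $G$ isomorphic to $G_{j}$, and hence whirly. Whirliness is defined purely in terms of continuous actions and invariant measures, so it is preserved under isomorphism of topological groups. (One could also argue through Lemma~\ref{lemma:whirly.persistence}\ref{lemma:whirly.persistence.3} applied to the isomorphism.)

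Next, the subgroup generated by $\bigcup_{j \in I} \iota_{j}(G_{j})$ is exactly the direct sum $\bigoplus_{i \in I} G_{i}$, namely the elements with only finitely many non-trivial coordinates. This is because the factors $\iota_{j}(G_{j})$ commute pairwise. I then check that this subgroup is dense in $G$. A basic neighbourhood of $g \in G$ restricts only finitely many coordinates, say those in a finite set $F \subseteq I$, to open sets $U_{i} \ni g_{i}$. The element that agrees with $g$ on $F$ and is trivial elsewhere lies in $\bigoplus_{i} G_{i}$ and in this neighbourhood.

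Consequently
\begin{displaymath}
	G \, = \, \overline{\left\langle \bigcup\nolimits_{j \in I} \iota_{j}(G_{j}) \right\rangle} \, \subseteq \, \overline{\left\langle \bigcup \{ H \leq G \mid H \text{ whirly} \} \right\rangle} ,
\end{displaymath}
so the hypothesis of Lemma~\ref{lemma:whirly.persistence}\ref{lemma:whirly.persistence.1} is satisfied and $G$ is whirly. Alternatively, one could first show by induction that finite products are whirly, using Lemma~\ref{lemma:whirly.persistence}\ref{lemma:whirly.persistence.4} with $N = G_{1}$ and $G/N \cong G_{2}$. The density step then gives the full product via Lemma~\ref{lemma:whirly.persistence}\ref{lemma:whirly.persistence.2}, but this route is unnecessary.

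There is no real obstacle. The only points needing a moment's care are two checks. First, $\iota_{j}$ must be a homeomorphism onto its image, so that the image is a whirly subgroup and not merely a continuous image. Second, the density of the restricted product must be verified with respect to the product topology and not the box topology, which matters when $I$ is infinite.
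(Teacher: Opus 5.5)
Your argument is correct, and it is shorter than the paper's proof.

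\textbf{The paper's route.} It first shows that a product of two whirly groups is whirly. It does this via Lemma~\ref{lemma:whirly.persistence}\ref{lemma:whirly.persistence.4}, with $N = G \times \{1\}$ and $(G \times H)/N \cong H$. It then passes to arbitrary finite products by induction. Only at the end does it apply Lemma~\ref{lemma:whirly.persistence}\ref{lemma:whirly.persistence.1}, to the finite subproducts $G_{F}$, whose union is dense.

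\textbf{Your route.} You use the fact that Lemma~\ref{lemma:whirly.persistence}\ref{lemma:whirly.persistence.1} is stated for the closure of the \emph{generated} subgroup. Its proof only needs that each stabiliser $G_{x}$, for $x \in \spt(\mu)$, is a closed subgroup containing every whirly subgroup. So the coordinate subgroups $\iota_{j}(G_{j})$ already suffice: they generate the restricted product, which is dense. The extension lemma and the induction become unnecessary.

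\textbf{Why the paper argues as it does.} Its two-step structure mirrors the inertness analogue, Lemma~\ref{lemma:inert.products}. There, the corresponding persistence result, Lemma~\ref{lemma:inert.persistence}\ref{lemma:inert.persistence.1}, is only available for the closure of a \emph{union} of inert subgroups. The reason is that the set of elements admitting a fixed point need not be closed under multiplication. So for inertness a separate finite-product step is genuinely needed, and the paper proves it by a direct fixed-point argument. For whirliness, your version is the more economical one.

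\textbf{A minor point.} The check that $\iota_{j}$ is a homeomorphism onto its image is not needed. By Lemma~\ref{lemma:whirly.persistence}\ref{lemma:whirly.persistence.3}, any continuous homomorphic image of a whirly group is whirly, as you yourself note in passing.
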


\begin{proof} First, we show that the direct product of any finite family of whirly topological groups is whirly. Upon induction (and due to the fact that trivial groups are whirly), it suffices to check that the direct product of two whirly topological groups is whirly. So, let $G$ and $H$ be whirly topological groups. Consider $N \defeq G\times\{1\} \unlhd G \times H$. Since the topological groups $N \cong G$ and $(G\times H)/N \cong H$ are whirly, we can apply Lemma~\ref{lemma:whirly.persistence}\ref{lemma:whirly.persistence.4} and get the whirliness of $G\times H$.

Turning to the general case, let $(G_{i})_{i \in I}$ be a family of whirly topological groups and consider $G \defeq \prod_{i \in I} G_{i}$. For every finite subset $F \subseteq I$, the topological subgroup \begin{displaymath}
	G_{F} \, \defeq \, \{ g \in G \mid \forall i \in I\setminus F \colon \, g_{i} = 1 \} \, \leq \, G
\end{displaymath} is isomorphic to $\prod_{i \in F} G_{i}$ and hence whirly by the assertion established in the preceding paragraph. Since $G = \overline{\bigcup\{ G_{F} \mid F \subseteq I \text{ finite}\}}$, thus $G$ is whirly by Lemma~\ref{lemma:whirly.persistence}\ref{lemma:whirly.persistence.1}. \end{proof}

\begin{lem}\label{lemma:open.subgroup} A whirly topological group does not have any proper open subgroups. \end{lem}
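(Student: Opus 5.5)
We argue by contraposition: we assume $H$ is a proper open subgroup of a topological group $G$ and build a continuous action of $G$ on a compact Hausdorff space carrying an invariant regular Borel probability measure whose support contains a point not fixed by $G$. The natural candidate is the coset space $G/H$. Since $H$ is open, $G/H$ is discrete, and the left translation action $G \times G/H \to G/H$ is continuous: for fixed $xH$, the stabiliser $xHx^{-1}$ is an open subgroup, so each orbit map is locally constant in $g$.

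The difficulty is that $G/H$ need not be finite, so it need not be compact and need not carry an invariant probability measure. To get around this we compactify. Let $X \defeq \{0,1\}^{G/H}$ with the product topology, so $X$ is compact Hausdorff, and let $G$ act by shifting coordinates, $(g\xi)(yH) \defeq \xi(g^{-1}yH)$. Equip $X$ with the Bernoulli product measure $\mu \defeq \bigotimes_{G/H} \tfrac12(\delta_{0}+\delta_{1})$, which is a regular Borel probability measure on the compact space~$X$ (the Baire product measure on a product of compact metrizable spaces extends uniquely to a regular Borel measure, a standard fact). Invariance of $\mu$ under $G$ holds because $g$ merely permutes coordinates, so $\mu$ is preserved on cylinder sets and hence everywhere by uniqueness of the regular extension.

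We check continuity of the action. Continuity in $\xi$ for fixed $g$ is clear, as the action permutes coordinates. For joint continuity, fix a basic neighbourhood of $g\xi$ determined by finitely many coordinates $y_{1}H,\dots,y_{n}H$. The set $V \defeq \bigcap_{i} y_{i}Hy_{i}^{-1}$ is an open neighbourhood of $1$. For $g' \in gV$ we have $g'^{-1}y_{i}H = g^{-1}y_{i}H$ for all $i$, since $g^{-1}g' \in V$ fixes each $y_{i}H$. Hence $(g'\xi')(y_{i}H) = \xi'(g^{-1}y_{i}H)$, and this equals $(g\xi)(y_{i}H)$ once $\xi'$ agrees with $\xi$ on the finitely many coordinates $g^{-1}y_{i}H$. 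This proves joint continuity, and it is the step I expect to need the most care.

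Finally, $\mu$ has full support, since every non-empty basic open set has measure $2^{-n}>0$, so $\spt(\mu) = X$. But $\Fix_{G}(X)$ consists only of the constant functions: because $H \neq G$, there are at least two cosets, and $G$ acts transitively on $G/H$. So the point $\xi \defeq \mathbf 1_{\{H\}}$ lies in $\spt(\mu)\setminus\Fix_{G}(X)$. To see this, pick $g\notin H$; then $(g\xi)(gH) = \xi(H) = 1$, whereas $\xi(gH) = 0$. Hence $G$ is not whirly, and the contrapositive gives the claim.
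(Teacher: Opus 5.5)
This is the paper's proof: the same space $X=\{0,1\}^{G/H}$ with the coordinate-shift action and the Bernoulli measure $\bigotimes_{G/H}\tfrac12(\delta_0+\delta_1)$, whose full support forces $\Fix_G(X)=X$ under whirliness, and hence $H=G$. You additionally supply the continuity, regularity and fixed-point details that the paper leaves implicit.

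There is one slip in the joint-continuity step. From $g^{-1}g'\in V$ you only get $g'y_iH=gy_iH$, not $g'^{-1}y_iH=g^{-1}y_iH$. For instance, in $S_3$ with $H=\{e,(12)\}$, $y_1=e$, $g=(13)$ and $g'=(13)(12)$, the latter fails. The fix is to take $g'\in Vg$ instead. Then $g'=vg$ with $v\in V$, and $g'^{-1}y_iH=g^{-1}v^{-1}y_iH=g^{-1}y_iH$, after which the rest of your argument goes through unchanged.
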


\begin{proof} Let $G$ be a whirly topological group. If $H$ is an open subgroup of $G$, then $G$ admits a continuous action on the compact Hausdorff space \begin{displaymath}
	X \, \defeq \, \{ 0,1 \}^{G/H}\! \, = \, \{ 0,1 \}^{\{ gH \mid g \in G \}}
\end{displaymath} defined by \begin{displaymath}
	(gx)(A) \, \defeq \, x\!\left(g^{-1}A\right) \qquad (g \in G, \, x \in X, \, A \in G/H) ,
\end{displaymath} and the product measure $\mu \defeq \nu^{\otimes G/H}$ arising from the measure $\nu \defeq \tfrac{1}{2}(\delta_{0} + \delta_{1})$ on $\{ 0,1 \}$ constitutes a $G$-invariant regular Borel probability measure on $X$ with $\spt(\mu) = X$, whence $\Fix_{G}(X) = X$ by whirliness of $G$, which implies that $H = G$. \end{proof}

\begin{prop}\label{proposition:whirly.not.locally.compact} Every whirly locally compact group is trivial. \end{prop}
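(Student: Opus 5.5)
Let $G$ be a whirly locally compact group. I will show $G$ is trivial by producing, for every $g \neq 1$, a continuous action of $G$ on a compact Hausdorff space together with a $G$-invariant regular Borel probability measure whose support contains a point not fixed by $g$.

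The first choice is the one-point compactification $X \defeq G \cup \{\infty\}$ with the left translation action. The action $G \times X \to X$ fixing $\infty$ is continuous, as is standard for locally compact Hausdorff groups. However, a left Haar measure on $G$ is finite only when $G$ is compact, so this case split is needed.

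If $G$ is compact, take normalized Haar measure $\mu$ on $X = G$. Its support is all of $G$, and left translation fixes no point unless $g = 1$. Whirliness then forces $G = \{1\}$.

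If $G$ is not compact, I will avoid Haar measure and use Lemma~\ref{lemma:open.subgroup} instead. A locally compact group contains an open subgroup $H$ that is compactly generated: take $H$ generated by a compact symmetric identity neighbourhood. By Lemma~\ref{lemma:open.subgroup}, $H = G$, so $G$ is compactly generated. This alone is not enough, so the plan is to get an open subgroup that is compact-by-something and then show $G$ is compact.

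The cleanest route I would try is the Gelfand-space route. The left regular representation of $G$ on $L^{2}(G)$ gives a continuous action of $G$ on a unital separable-ish commutative $\Cstar$-algebra. Its spectrum is compact, but again no invariant probability measure comes with it, so this does not immediately help.

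So I would instead reduce to the compact case using van Dantzig's theorem together with the persistence results. Let $G_{0}$ be the identity component. Then $G/G_{0}$ is totally disconnected locally compact, and by Lemma~\ref{lemma:whirly.persistence}\ref{lemma:whirly.persistence.3} it is whirly. By van Dantzig it has a compact open subgroup. Lemma~\ref{lemma:open.subgroup} makes that subgroup everything, so $G/G_{0}$ is compact and hence trivial by the compact case.

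Thus $G$ is connected. By Gleason--Yamabe, $G$ has a compact normal subgroup $K$ with $G/K$ a connected Lie group. The group $G/K$ is whirly by Lemma~\ref{lemma:whirly.persistence}\ref{lemma:whirly.persistence.3}. It remains to show that a whirly connected Lie group is trivial, for then $G = K$ is compact and hence trivial by the compact case.

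This last step is the main obstacle. For a nontrivial connected Lie group $L$, I would look for a nontrivial continuous homomorphism from $L$ or a closed subgroup to a compact group, or a compact $L$-space with an invariant measure of large support. I would try the following first. Pass to $L/\mathrm{Rad}(L)$ and to the radical. For a nontrivial abelian quotient $L/\overline{[L,L]}$, a nontrivial character into $\T$ gives a compact image. The image of a whirly group there must be whirly and compact, hence trivial, so $L$ is perfect. For a perfect $L$, the Levi factor is semisimple, and I would use a Borel-Harish-Chandra or Mautner-type argument to exhibit a nonfixed point in the support of an invariant measure on a compact quotient $L/\Gamma$ of a lattice, if one exists.

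This part I expect to need the most care. A more uniform alternative is to take the closed subgroup generated by a single element $g \neq 1$: it is either compact or isomorphic to $\Z$. I would then try to transfer non-whirliness back to $G$ via an induced action.
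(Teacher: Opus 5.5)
Your compact case is correct, and so are the reductions. Van Dantzig together with Lemma~\ref{lemma:open.subgroup} makes $G$ connected. Gleason--Yamabe together with Lemma~\ref{lemma:whirly.persistence}\ref{lemma:whirly.persistence.3} then reduces everything to showing that a whirly connected Lie group $L$ is trivial. But you never carry out that step, and it is where all the difficulty lies.

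\begin{itemize}
\item The character argument only shows that $L$ is topologically perfect.
\item For perfect $L$ you appeal to the invariant measure on a compact quotient $L/\Gamma$ by a lattice ``if one exists''. Such lattices need not exist: $\mathrm{SL}_{2}(\R)\ltimes\R^{2}$ is perfect, yet by Auslander's theorem it admits no cocompact lattice, since every lattice projects into a conjugate of $\mathrm{SL}_{2}(\Z)$. You also never say how the radical would be handled.
\item The cyclic-subgroup alternative cannot work either. Whirliness does not pass to closed subgroups: for a $\mathrm{II}_{1}$ factor $M$, the whirly group $\U(M)$ contains the circle of scalars. So non-whirliness of $\overline{\langle g\rangle}$ says nothing about $G$. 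Moreover, the induced space $G\times_{\langle g\rangle}Y$ carries a $G$-invariant probability measure only if $G/\langle g\rangle$ carries a finite invariant measure, which already fails for $G=\R^{2}$.
\end{itemize}

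The missing idea is that no structure theory is needed. The paper's argument runs as follows.
\begin{itemize}
\item Lemma~\ref{lemma:open.subgroup} gives $\sigma$-compactness.
\item Kakutani--Kodaira then yields, inside any identity neighbourhood $U\neq G$, a compact normal subgroup $N$ with $G/N$ second-countable.
\item Being locally compact, $G/N$ is unitarily representable. So Theorem~\ref{theorem:construction} embeds it into $\Aut(\Omega,\mu)$ with an ergodic near-action, which is whirly by Proposition~\ref{proposition:whirly.polish}.
\item Near-actions of second-countable locally compact groups have spatial models (Mackey--Varadarajan--Ramsay), and whirly near-actions with spatial models are trivial (Glasner--Tsirelson--Weiss). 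Hence $G/N$ is trivial and $G=N\subseteq U$, a contradiction.
\end{itemize}

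If you prefer to stay within your Lie-theoretic framework, you can close the gap by a dichotomy.
\begin{itemize}
\item If $L$ is amenable, every compact $L$-space carries an invariant measure whose non-empty support consists of fixed points. Hence $L$ is extremely amenable, which for non-trivial $L$ contradicts Veech's theorem that every locally compact group acts freely on some compact space.
\item If $L$ is non-amenable, it surjects onto a non-compact simple Lie group $S$ with trivial centre. By Borel's theorem $S$ has a cocompact lattice $\Gamma$. The invariant probability measure on $S/\Gamma$ has full support but no fixed points, so $S$, and hence $L$, is not whirly.
\end{itemize}
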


\begin{proof} Let $G$ be a whirly locally compact group. Any compact identity neighborhood in $G$ generates a $\sigma$-compact, open subgroup of $G$, which necessarily coincides with $G$ by Lemma~\ref{lemma:open.subgroup}. Therefore, $G$ is $\sigma$-compact. Now, for contradiction, suppose that $G$ is non-trivial. Pick any identity neighborhood $U$ in $G$ with $U \ne G$. By the Kakutani--Kodaira theorem~\cite[Satz~6]{KakutaniKodaira} (see also~\cite[Theorem~A.9]{GreenleafMoskowitz} or~\cite[Theorem~8.7]{HewittRoss}), there exists a compact normal subgroup $N \unlhd G$ such that $N \subseteq U$ and $G/N$ is second-countable. Since locally compact groups are unitarily representable, we may apply Theorem~\ref{theorem:construction} to obtain the existence of a standard probability space $(\Omega,\mu)$ and a topological group embedding $\iota \colon G/N \to \Aut(\Omega,\mu)$ such that the induced near-action $G/N \curvearrowright^{\iota} (\Omega,\mu)$ is ergodic. Then $G/N \curvearrowright^{\iota} (\Omega,\mu)$ is whirly by Lemma~\ref{lemma:whirly.persistence}\ref{lemma:whirly.persistence.3} and Proposition~\ref{proposition:whirly.polish} and hence trivial by~\cite[Theorem~0.3(a)]{GlasnerTsirelsonWeiss} and~\cite[Proposition~3.3(b)]{GlasnerTsirelsonWeiss}. This means that $\iota(G/N)$ is trivial and thus $G/N$ is trivial. In turn, $G = N \subseteq U \subseteq G$ and so $U = G$, which constitutes the intended contradiction. \end{proof}

A topological group $G$ is called \emph{minimally almost periodic} if every continuous homomorphism from $G$ to a compact Hausdorff topological group is trivial.

\begin{cor}\label{corollary:whirly.vs.locally.compact} Every continuous homomorphism from a whirly topological group to a locally compact group is trivial. In particular, whirliness implies minimal almost periodicity. \end{cor}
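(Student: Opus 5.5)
Let $G$ be whirly and let $\phi \colon G \to L$ be a continuous homomorphism into a locally compact group $L$. The plan is to push whirliness forward to the image and then pass to its closure, where Proposition~\ref{proposition:whirly.not.locally.compact} applies.

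First, by Lemma~\ref{lemma:whirly.persistence}\ref{lemma:whirly.persistence.3}, the image $\phi(G)$ is whirly, since $\phi$ is a surjective continuous homomorphism onto $\phi(G)$ equipped with the subspace topology from $L$. Second, $\phi(G)$ is a dense topological subgroup of $H \defeq \overline{\phi(G)} \leq L$. By Lemma~\ref{lemma:whirly.persistence}\ref{lemma:whirly.persistence.2}, the closure $H$ is therefore whirly as well. Third, $H$ is a closed subgroup of the locally compact group $L$, hence locally compact. Proposition~\ref{proposition:whirly.not.locally.compact} then yields that $H$ is trivial, so $\phi(G) \subseteq H = \{1\}$ and $\phi$ is trivial.

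For the second assertion, every compact Hausdorff topological group is locally compact. Hence every continuous homomorphism from a whirly group into such a group is trivial by the first part, which is exactly minimal almost periodicity.

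The only subtle point is that $\phi(G)$ need not be closed or locally compact itself. Passing to the closure via Lemma~\ref{lemma:whirly.persistence}\ref{lemma:whirly.persistence.2} handles this, so no real obstacle is expected. I assume, as elsewhere in the paper, that locally compact groups are Hausdorff.
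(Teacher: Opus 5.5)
Your proposal is correct and follows the paper's own proof: whirliness passes to $\phi(G)$ by Lemma~\ref{lemma:whirly.persistence}\ref{lemma:whirly.persistence.3} and then to $\overline{\phi(G)}$ by Lemma~\ref{lemma:whirly.persistence}\ref{lemma:whirly.persistence.2}. As a closed subgroup of a locally compact group, $\overline{\phi(G)}$ is locally compact, so Proposition~\ref{proposition:whirly.not.locally.compact} makes it trivial; minimal almost periodicity follows at once because compact Hausdorff groups are locally compact.
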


\begin{proof} Let $\pi \colon G \to H$ be a continuous homomorphism from a whirly topological group $G$ to a locally compact group $H$. Then $\overline{\pi(G)} \leq H$ is whirly due to Lemma~\ref{lemma:whirly.persistence}\ref{lemma:whirly.persistence.2}$+$\ref{lemma:whirly.persistence.3} and locally compact, hence trivial by Proposition~\ref{proposition:whirly.not.locally.compact}. This means that $\pi$ is trivial. \end{proof}

%\begin{lem}\label{lemma:whirly.implies.minap} Every whirly topological group is minimally almost periodic. \end{lem}

%\begin{proof} First of all, any whirly compact Hausdorff topological group is trivial: indeed, since the Haar measure $\mu$ on a compact Hausdorff topological group $K$ is regular and invariant under the continuous action $\alpha \colon K \times K \to K, \, (x,y) \mapsto xy$, whirliness of $K$ implies that every element of $K = \spt(\mu)$ is fixed under $\alpha$, which readily entails triviality of $K$. Now, if $\pi \colon G \to H$ is a continuous homomorphism from a whirly topological group $G$ to a compact Hausdorff topological group $H$, then $\overline{\pi(G)} \leq H$ is compact, but also whirly due to Lemma~\ref{lemma:whirly.persistence}\ref{lemma:whirly.persistence.2}$+$\ref{lemma:whirly.persistence.3}, hence trivial. \end{proof}

\section{Inertness}\label{section:inertness}

A topological group $G$ is said to be \emph{inert}~\cite{SchneiderGAFA} if, for every continuous action of $G$ on a non-empty compact Hausdorff space $X$, every element of $G$ admits a fixed point in $X$, that is, \begin{displaymath}
	\forall g \in G \ \exists x \in X \colon \qquad gx \, = \, x .
\end{displaymath} This section explores the persistence of inertness under some standard constructions of topological groups.

\begin{lem}\label{lemma:inert.persistence} Let $G$ be a topological group. \begin{enumerate}
	\item\label{lemma:inert.persistence.1} If $G = \overline{\bigcup \{ H \leq G \mid H \text{ inert} \}}$, then $G$ is inert.
	\item\label{lemma:inert.persistence.2} A dense topological subgroup $H \leq G$ is inert if and only if $G$ is inert.
	\item\label{lemma:inert.persistence.3} Let $\pi \colon G \to H$ be a surjective continuous homomorphism onto another topological group $H$. If $G$ is inert, then so is $H$.
\end{enumerate} \end{lem}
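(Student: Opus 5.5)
We prove the three items in the order \ref{lemma:inert.persistence.1}, \ref{lemma:inert.persistence.2}, \ref{lemma:inert.persistence.3}, following the pattern of Lemma~\ref{lemma:whirly.persistence}. The key extra ingredient is a closedness observation. For a continuous action of $G$ on a non-empty compact Hausdorff space $X$, consider the set
\begin{displaymath}
	S \, \defeq \, \{ g \in G \mid \exists x \in X \colon \, gx = x \} .
\end{displaymath}
We claim $S$ is closed in $G$. Indeed, let $g_{\iota} \to g$ be a net in $S$ with fixed points $x_{\iota}$. By compactness, we may pass to a subnet with $x_{\iota} \to x$. Joint continuity of the action then gives $g_{\iota}x_{\iota} \to gx$. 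Since also $g_{\iota}x_{\iota} = x_{\iota} \to x$, the Hausdorff property yields $gx = x$.

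For \ref{lemma:inert.persistence.1}, let $G = \overline{\bigcup \mathcal{H}}$, where $\mathcal{H}$ is the family of inert subgroups, and fix a continuous action on a non-empty compact Hausdorff space $X$. Each $H \in \mathcal{H}$ acts continuously on $X$ by restriction. Hence each $h \in H$ has a fixed point, so $\bigcup \mathcal{H} \subseteq S$. As $S$ is closed, $S = G$, which is inertness of $G$. Notice that here we need $G$ to be the closure of the union itself, not of the generated subgroup. This is exactly why the hypothesis is phrased without $\langle \cdot \rangle$, since the set of elements with fixed points need not be a subgroup.

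For \ref{lemma:inert.persistence.2}, the implication ($\Longrightarrow$) is the special case $\mathcal{H} \ni H$ of \ref{lemma:inert.persistence.1}. For ($\Longleftarrow$), let $H$ act continuously on a non-empty compact Hausdorff $X$. As in the proof of Lemma~\ref{lemma:whirly.persistence}\ref{lemma:whirly.persistence.2}, the associated continuous homomorphism $H \to \Homeo(X)$ extends to a continuous homomorphism $G \to \Homeo(X)$, because $\Homeo(X)$ is Ra\u{\i}kov complete (Remark~\ref{remark:raikov.complete}, together with the extension property for homomorphisms into Ra\u{\i}kov complete groups,~\cite[Proposition~3.6.12]{AT}). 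This yields a continuous action of $G$ on $X$ extending that of $H$. Inertness of $G$ then gives every $h \in H \subseteq G$ a fixed point.

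For \ref{lemma:inert.persistence.3}, let $H$ act continuously on a non-empty compact Hausdorff $X$, and pull the action back via $\pi$ to the continuous action $(g,x) \mapsto \pi(g)x$. Given $h \in H$, choose $g$ with $\pi(g) = h$ using surjectivity. A fixed point of $g$ is then a fixed point of $h$.

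The only step requiring care is the extension in \ref{lemma:inert.persistence.2}. One must check that $G \times X \to X$, $(g,x) \mapsto \pi(g)(x)$, is jointly continuous. This follows because evaluation $\Homeo(X) \times X \to X$ is continuous for the compact-open topology on compact Hausdorff $X$. Everything else is routine.
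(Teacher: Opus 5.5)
Your proposal is correct and follows the paper's proof of all three items. The only difference is in the first item: you prove directly, with a compactness and net argument, that $\{ g \in G \mid \exists x \in X \colon gx = x \}$ is closed in $G$, whereas the paper cites this fact (Lemma~11.13(1) in Schneider's earlier inertness paper).
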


\begin{proof} \ref{lemma:inert.persistence.1} This is a straightforward adaptation of the argument given in~\cite[Proof of Lemma~11.13(2)]{SchneiderGAFA}. Suppose that the set $S \defeq \bigcup \{ H \leq G \mid H \text{ inert} \}$ is dense in $G$. Now, if $G$ acts continuously on a non-empty compact Hausdorff space $X$, then \begin{displaymath}
	S \, \subseteq \, \{g \in G \mid \exists x \in X \colon \, gx = x \} \, \eqdef \, T ,
\end{displaymath} wherefore $\overline{T} = G$ and hence $T = G$ by~\cite[Lemma~11.13(1)]{SchneiderGAFA}. Thus, $G$ is inert.

\ref{lemma:inert.persistence.2} ($\Longrightarrow$) This is a special case of~\ref{lemma:inert.persistence.1}.

($\Longleftarrow$) Assume that $H$ acts continuously on a non-empty compact Hausdorff space~$X$. Consider the topological group $\Homeo(X)$ carrying the compact-open topology. Since \begin{displaymath}
	\pi_{0} \colon \, H \, \longrightarrow \, \Homeo(X), \quad h \, \longmapsto \, [x \mapsto hx]
\end{displaymath} is a continuous homomorphism and $\Homeo(X)$ is Ra\u{\i}kov complete by Remark~\ref{remark:raikov.complete}, there exists a continuous homomorphism $\pi \colon G \to \Homeo(X)$ with $\pi\vert_{H} = \pi_{0}$ (see, e.g.,~\cite[Proposition~3.6.12]{AT}). In turn, $G \times X \to X, \, (g,x) \mapsto \pi(g)(x)$ is a continuous action, and so inertness of $G$ entails that \begin{displaymath}
	\forall g \in G \ \exists x \in X \colon \qquad \pi(g)(x) \, = \, x .
\end{displaymath} As $\pi\vert_{H} = \pi_{0}$, we conclude that every element of $H$ admits a fixed point in $X$.

\ref{lemma:inert.persistence.3} Assume that $H$ acts continuously on a non-empty compact Hausdorff space $X$. Then $G \times X \to X, \, (g,x) \mapsto \pi(g)x$ is a continuous action, wherefore inertness of $G$ asserts that every element of $G$ admits a fixed point in $X$. Since $\pi$ is surjective, it follows that every element of $H$ has a fixed point in $X$. \end{proof}

\begin{lem}\label{lemma:inert.products} If $(G_{i})_{i \in I}$ is a family of inert topological groups, then $\prod_{i \in I} G_{i}$ is inert. \end{lem}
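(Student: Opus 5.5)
The plan mirrors the proof of Lemma~\ref{lemma:whirly.products}: first handle the product of two inert groups, then pass to finite products by induction, and finally to arbitrary products via Lemma~\ref{lemma:inert.persistence}\ref{lemma:inert.persistence.1}. Unlike whirliness, inertness is a statement about single elements. So instead of Lemma~\ref{lemma:whirly.persistence}\ref{lemma:whirly.persistence.4}, the two-factor step will use the fact that the two coordinates of an element commute.

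\emph{Two factors.} Let $G$ and $H$ be inert. Suppose $G \times H$ acts continuously on a non-empty compact Hausdorff space $X$, and fix $(a,b) \in G \times H$.
\begin{itemize}
	\item Restricting the action to $G \times \{1\} \cong G$ gives a continuous action of an inert group. Hence the set $Y \defeq \{ x \in X \mid (a,1)x = x \}$ is non-empty, and it is closed because the action is continuous, so $Y$ is compact Hausdorff.
	\item Since $(1,h)$ commutes with $(a,1)$ for every $h \in H$, the set $Y$ is invariant under $\{1\} \times H$. Indeed, if $x \in Y$, then $(a,1)(1,h)x = (1,h)(a,1)x = (1,h)x$.
	\item So $H \cong \{1\} \times H$ acts continuously on the non-empty compact Hausdorff space $Y$. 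Inertness of $H$ yields $y \in Y$ with $(1,b)y = y$.
	\item Then $(a,b)y = (1,b)(a,1)y = (1,b)y = y$, so $(a,b)$ has a fixed point in $X$.
\end{itemize}
Since the trivial group is inert, induction on the number of factors shows that every finite product of inert groups is inert.

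\emph{Arbitrary products.} For $G \defeq \prod_{i \in I} G_{i}$ and finite $F \subseteq I$, consider the subgroups $G_{F} \defeq \{ g \in G \mid \forall i \in I\setminus F \colon g_{i} = 1 \}$.
\begin{itemize}
	\item Each $G_{F}$ is topologically isomorphic to $\prod_{i \in F} G_{i}$, hence inert by the finite case.
	\item The union $\bigcup_{F} G_{F}$ is dense in $G$ for the product topology.
	\item Therefore Lemma~\ref{lemma:inert.persistence}\ref{lemma:inert.persistence.1} gives that $G$ is inert.
\end{itemize}

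The only step needing any idea is the two-factor case: passing to the fixed-point set of $(a,1)$ and observing that it is invariant under the commuting factor. I expect the rest to be routine.
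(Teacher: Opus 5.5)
Your proposal is correct and follows the paper's proof essentially step for step. Both handle the two-factor case by passing to the closed, non-empty fixed-point set of $(a,1)$, which is $\{1\}\times H$-invariant by commutation, and both extend to arbitrary products via the dense union of the finite-support subgroups $G_F$ and Lemma~\ref{lemma:inert.persistence}\ref{lemma:inert.persistence.1}.
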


\begin{proof} We begin by showing that the direct product of any finite family of inert topological groups is inert. By induction, it suffices to check that the direct product of two inert topological groups is inert. Let $G$ and $H$ be inert topological groups. Consider any continuous action of $G \times H$ on a non-empty compact Hausdorff space~$X$. Let $(g,h) \in G \times H$. Since $G \times \{ 1 \}$ is inert, \begin{displaymath}
	Y \, \defeq \, \{ x \in X \mid (g,1)x = x\}
\end{displaymath} is non-empty. Moreover, one readily checks that $Y$ is a $\{1\} \times H$-invariant and closed subset of $X$. As $\{ 1\} \times H$ is inert, thus there exists $x \in Y$ such that $(1,h)x = x$. We conclude that $(g,h)x = (g,1)(1,h)x = (g,1)x = x$, as desired.

For the proof of the general statement, let $G \defeq \prod_{i \in I} G_{i}$ for any family $(G_{i})_{i \in I}$ of inert topological groups. For every finite subset $F \subseteq I$, the topological subgroup \begin{displaymath}
	G_{F} \, \defeq \, \{ g \in G \mid \forall i \in I\setminus F \colon \, g_{i} = 1 \} \, \leq \, G
\end{displaymath} is isomorphic to $\prod_{i \in F} G_{i}$ and hence inert by the assertion established above. Since $\bigcup\{ G_{F} \mid F \subseteq I \text{ finite}\}$ is dense in $G$, thus $G$ is inert thanks to Lemma~\ref{lemma:inert.persistence}\ref{lemma:inert.persistence.1}. \end{proof}

\begin{prop}[{\cite[p.~1612, paragraph before Corollary~1.6]{SchneiderGAFA}}] Every continuous homomorphism from an inert topological group to a locally compact group is trivial. In particular, inertness implies minimal almost periodicity. \end{prop}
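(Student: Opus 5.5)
We need to show that every continuous homomorphism from an inert group into a locally compact group is trivial. We mirror the proof of Corollary~\ref{corollary:whirly.vs.locally.compact}, using Lemma~\ref{lemma:inert.persistence} in place of Lemma~\ref{lemma:whirly.persistence}.

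\emph{Reduction.} Let $\pi \colon G \to H$ be a continuous homomorphism from an inert topological group $G$ to a locally compact group $H$. By Lemma~\ref{lemma:inert.persistence}\ref{lemma:inert.persistence.3}, $\pi(G)$ is inert. By Lemma~\ref{lemma:inert.persistence}\ref{lemma:inert.persistence.2}, its closure $K \defeq \overline{\pi(G)}$ is inert. As a closed subgroup of $H$, $K$ is locally compact. It therefore suffices to show that every inert locally compact (Hausdorff) group $K$ is trivial.

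\emph{Main step.} Let $K$ be a non-trivial locally compact group and pick $g \in K \setminus \{1\}$. We want a continuous action of $K$ on a non-empty compact Hausdorff space in which $g$ has no fixed point.

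First we make $K$ act freely. Consider the right-regular representation of $K$ on $L^{2}(K)$ with respect to left Haar measure. It is continuous and injective, so Gelfand--Raikov provides enough structure. A cleaner choice is the action of $K$ on the one-point compactification $K \cup \{\infty\}$ by left translation. This action is continuous, since left translations extend to homeomorphisms of the one-point compactification. Here $g$ fixes no point of $K$, but it does fix $\infty$. To remove that fixed point, we instead use the greatest ambit, i.e.\ the Samuel compactification $S(K)$ with respect to the right uniformity. This is a non-empty compact Hausdorff space carrying a continuous $K$-action that extends left translation.

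The key fact is Veech's theorem: every locally compact group acts freely on its greatest ambit. In particular, $gx \neq x$ for all $x \in S(K)$, which contradicts inertness.

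\emph{Main obstacle.} Veech's theorem is the deep input; its hard case is the non-discrete, non-compact one. If we want to avoid it, there is an alternative. Corollary~\ref{corollary:whirly.vs.locally.compact} handled the locally compact case through an open $\sigma$-compact subgroup and Kakutani--Kodaira, and we could try to reduce to second-countable quotients $K/N$ with $N$ compact normal, with $gN \neq N$, using Lemma~\ref{lemma:inert.persistence}\ref{lemma:inert.persistence.3}. This route is less clean, however. Inertness gives no information about open subgroups directly. Moreover, a fixed-point-free element still has to be produced, e.g.\ via Veech's theorem for $K/N$, or via an explicit construction in the compact or discrete cases. I would therefore invoke Veech's theorem directly, as in the cited source.

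\emph{Final assertion.} Minimal almost periodicity follows at once, because compact Hausdorff groups are locally compact.
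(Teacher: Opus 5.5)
Your proof is correct and rests on the same key input as the paper's proof: Veech's theorem that a locally compact group acts freely on a non-empty compact Hausdorff space, such as its greatest ambit. The only difference is your preliminary reduction to $\overline{\pi(G)}$ via Lemma~\ref{lemma:inert.persistence}, which is valid but unnecessary, since the paper simply pulls back the free $H$-action along $\pi$ and concludes $\pi(g)=1$ from the fixed point that inertness provides.
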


\begin{proof} Let $\pi \colon G \to H$ be a continuous homomorphism from an inert topological group $G$ to a locally compact group $H$. According to~\cite[Theorem~2.2.1]{veech}, the topological group $H$ admits a free continuous action on a non-empty compact Hausdorff space $X$. In turn, $G \times X \to X, \, (g,x) \mapsto \pi(g)x$ is a continuous action. Consequently, for every $g \in G$, inertness of $G$ asserts the existence of $x \in X$ such that $\pi(g)x = x$, which entails that $\pi(g) = 1$ by freeness of the inital action. This shows that $\pi$ is trivial. \end{proof}

\section{Examples}\label{section:examples}

Our Theorem~\ref{theorem:examples} establishes whirliness and inertness for three concrete families of topological groups. The current section's purpose is to briefly introduce these examples along with some of their relevant properties.

\subsection{$L^{0}$ groups}\label{subsection:L0.groups} Let $\mu$ be a \emph{submeasure} on a Boolean algebra $\mathcal{A}$, i.e., a function $\mu \colon \mathcal{A} \to \R$ such that \begin{itemize}
	\item[---\,] $\mu(0) = 0$,
	\item[---\,] $\mu(A) \leq \mu(B)$ for all $A,B \in \mathcal{A}$ with $A \leq B$,
	\item[---\,] $\mu(A \vee B) \leq \mu(A) + \mu(B)$ for all $A,B \in \mathcal{A}$.
\end{itemize} Then $\mu$ is said to be a \emph{measure} if $\mu(A \vee B) = \mu(A) + \mu(B)$ for any two $A,B \in \mathcal{A}$ with $A \wedge B = 0$. Moreover, $\mu$ is called \emph{diffuse} if, for every $\epsilon \in \R_{>0}$, there exists a finite subset $\mathcal{Q} \subseteq \mathcal{A}$ such that $1 = \bigvee \mathcal{Q}$ and $\sup_{Q \in \mathcal{Q}} \mu(Q) \leq \epsilon$.

Let $\mathcal{A}$ be a Boolean algebra and let $G$ be a group. A \emph{finite $G$-partition of unity} in $\mathcal{A}$ is a map $a \colon G \to \mathcal{A}$ such that $\{g \in G \mid a(g) \neq 0\}$ is finite and $\bigveedot_{g \in G} a(g) = 1$. The set $S(\mathcal{A},G)$ of all finite $G$-partitions of unity in $\mathcal{A}$ equipped with the multiplication
\begin{displaymath}
    (ab)(g) \, \defeq \, \bigvee \{ a(x) \wedge b(y) \mid x,y \in G, \, xy = g \} \qquad (a, b \in S(\mathcal{A},G),\, g \in G)
\end{displaymath} forms a group. Now, suppose that $\mu$ is a submeasure on $\mathcal{A}$ and that $G$ is a topological group. Then the family of sets
\begin{displaymath}
    N_{\mu}(U,\varepsilon) \, \defeq \, \left\{ a \in S(\mathcal{A},G) \left\vert \, \mu\!\left(\bigvee\nolimits_{g\in U} a(g)\right) \leq \varepsilon \right\}\!\right. \qquad (U \in \Neigh(G),\, \varepsilon \in \R_{>0})
\end{displaymath} constitutes a neighborhood basis at the neutral element for a unique group topology on $S(\mathcal{A},G)$ (cf.~\cite[Section~2]{SchneiderSolecki25}). We denote by $S(\mu,G)$ the resulting topological group and define \begin{displaymath}
	L^{0}(\mu,G) \, \defeq \, \widehat{S(\mu,G)} ,
\end{displaymath} i.e., the topological group $L^{0}(\mu,G)$ is the Ra\u{\i}kov completion of $S(\mu,G)$ (see Section~\ref{section:completeness} and~\cite[Definition~2.2]{SchneiderSolecki25}).

\begin{remark}\label{remark:L0.identification} Let $(\Omega,\mathcal{B},\mu)$ be a finite measure space and let $G$ be a Polish group. Then the set $L^{0}(\Omega,\mathcal{B},\mu;G)$ of all $\mu$-equivalence classes of $\mathcal{B}$-measurable functions from $\Omega$ to $G$, equipped with pointwise multiplication and the topology of convergence in $\mu$, which is generated by the metric \begin{displaymath}
    d^{0}_{\mu}(f,g) \, \defeq \, \inf \{ \epsilon \in \R_{>0} \mid \mu(\{x \in \Omega \mid d(f(x),g(x)) > \epsilon \}) \leq \epsilon \}
\end{displaymath} for any metric $d$ generating the topology of $G$, constitutes a topological group (see~\cite[pp.~5--6]{moore} and~\cite[\S2]{SchneiderSolecki25}). Moreover, $L^{0}(\mu,G) \cong L^{0}(\Omega,\mathcal{B},\mu;G)$~\cite[Remark~2.3]{SchneiderSolecki25}. If $\mathcal{B}$ is countably generated, then $L^{0}(\Omega,\mathcal{B},\mu;G)$ is Polish by~\cite[Proposition~7]{moore}. Moreover, if $\mu$ is a probability measure, $G$ is discrete and $d$ is the $\{ 0,1\}$-valued metric on $G$, then \begin{displaymath}
    d_{\mu}^{0}(f,g) \, = \, \mu(\{\omega \in \Omega \mid f(\omega) \neq g(\omega)\})
\end{displaymath} for all $f,g \in L^{0}(\Omega,\mathcal{B},\mu;G)$. \end{remark}

We will need the following basic fact about unitary representability of $L^{0}$ groups. The reader is referred to~\cite{solecki2014} for a classification of continuous unitary representations of $L^{0}$ groups taking values in the circle and to~\cite{solecki2023,pestov18} for applications thereof.

\begin{lem}[see, e.g.,~\cite{SchneiderSolecki25}] Let $\mu$ be a measure. If a topological group $G$ is unitarily representable, then $L^{0}(\mu,G)$ is unitarily representable. \end{lem}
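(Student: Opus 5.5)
The plan is to build an explicit embedding of $S(\mu,G)$ into a unitary group and then pass to the Ra\u{\i}kov completion. Fix a topological group embedding $\rho \colon G \to \U(H)$ into the unitary group of some complex Hilbert space $H$. Since $\mu$ is a measure, we may represent the Boolean algebra $\mathcal{A}$ inside a measure space. One option is Stone duality, extending $\mu$ to a finitely additive (or, via the Stone space, regular countably additive) measure $\tilde{\mu}$ on the Stone space $X$ of $\mathcal{A}$, where elements of $\mathcal{A}$ correspond to clopen sets. Then set
\begin{displaymath}
	\mathcal{H} \, \defeq \, L^{2}(X,\tilde{\mu};H) ,
\end{displaymath}
and define, for $a \in S(\mathcal{A},G)$, the operator $\Phi(a)$ acting by
\begin{displaymath}
	(\Phi(a)\xi)(x) \, \defeq \, \rho(g)\xi(x) \quad \text{for } x \in a(g) .
\end{displaymath}
Since $a$ is a finite partition into clopen pieces, $\Phi(a)$ is a unitary multiplication operator. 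Moreover, $\Phi$ is a homomorphism, because on $a(x)\wedge b(y)$ the product $\Phi(a)\Phi(b)$ acts by $\rho(xy)$.

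The first substantive step is continuity of $\Phi$. Fix $\xi_{1},\ldots,\xi_{n} \in \mathcal{H}$ and $\epsilon>0$. By density, we may take the $\xi_{i}$ to be simple functions, bounded by some constant $C$, with values in a finite set $F \subseteq H$. Choose $U \in \Neigh(G)$ with $\Vert \rho(g)v - v\Vert < \epsilon$ for all $g \in U$ and $v \in F$. For $a \in N_{\mu}(U,\delta)$ we then get
\begin{displaymath}
	\Vert \Phi(a)\xi_{i}-\xi_{i}\Vert^{2} \, \leq \, \epsilon^{2}\mu(1) + 4C^{2}\delta ,
\end{displaymath}
which gives continuity at the identity and hence everywhere.

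The main obstacle is the converse, namely that $\Phi$ is open onto its image, i.e.\ that the $N_{\mu}(U,\varepsilon)$ are recovered. Given $U$, choose $v_{1},\ldots,v_{k} \in H$ and $\eta>0$ such that $\max_{j}\Vert\rho(g)v_{j}-v_{j}\Vert < \eta$ implies $g \in U$, and test against the constant vectors $\xi_{j} \equiv v_{j}$. If $\sum_{j}\Vert\Phi(a)\xi_{j}-\xi_{j}\Vert^{2}$ is small, then on the set $B \defeq \bigvee_{g\notin U} a(g)$ we have $\sum_{j}\Vert\rho(g)v_{j}-v_{j}\Vert^{2} \geq \eta^{2}$ pointwise. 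Hence
\begin{displaymath}
	\eta^{2}\mu(B) \, \leq \, \sum\nolimits_{j}\Vert\Phi(a)\xi_{j}-\xi_{j}\Vert^{2} .
\end{displaymath}
This is the point where additivity of $\mu$ is essential: it is what turns the integral into a measure bound. This only controls $\bigvee_{g \notin U} a(g)$, which is the correct form of the neighbourhood condition, namely the mass outside $U$. Here $N_{\mu}(U,\varepsilon)$ must be read as in~\cite{SchneiderSolecki25}, i.e.\ as small measure of the set where the value lies outside $U$. So $\Phi$ is a topological embedding of $S(\mu,G)$ into $\U(\mathcal{H})$; if $\Phi$ is not injective, we first pass to the Hausdorff quotient by null elements.

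Finally, $\U(\mathcal{H})$ is Ra\u{\i}kov complete (cf.~\cite[Remark~B.2]{SchneiderSolecki25}). Therefore the embedding extends to an embedding of the Ra\u{\i}kov completion $L^{0}(\mu,G)=\widehat{S(\mu,G)}$ onto the closure of its image in $\U(\mathcal{H})$, using Theorem~\ref{theorem:raikov} together with the universal property of completions. This shows that $L^{0}(\mu,G)$ is unitarily representable.
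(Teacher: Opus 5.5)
Your proof is correct. It differs from the paper's in being self-contained. The paper gives no construction and just combines Lemma~5.13(iii), Remark~B.3 and Remark~5.11 of \cite{SchneiderSolecki25}. You instead realize $S(\mu,G)$ directly by multiplication operators on $L^{2}(X,\tilde{\mu};H)$ over the Stone space. Your two estimates then show that the topology of $S(\mu,G)$ is exactly the initial topology induced by $\Phi$, and you pass to the Ra\u{\i}kov completion inside the complete group $\U(L^{2}(X,\tilde{\mu};H))$.

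The citation route is shorter. Yours makes clear that only finite additivity of $\mu$ is used: no diffuseness is needed, and no countable additivity on $\mathcal{A}$, since a finitely additive measure on the clopen algebra of a compact space is automatically a premeasure. It also shows that additivity is needed already for $\tilde{\mu}$ and the Hilbert space to exist at all, not only for the lower bound $\eta^{2}\mu(B) \leq \sum_{j}\Vert\Phi(a)\xi_{j}-\xi_{j}\Vert^{2}$.

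You were right to read $N_{\mu}(U,\varepsilon)$ with the join over $g \in G\setminus U$. As printed, with the join over $g \in U$, the set would not even contain the neutral element.

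Two minor points of polish:
\begin{enumerate}
\item Since the topology of $S(\mu,G)$ is the initial topology of $\Phi$ and the target is Hausdorff, $\ker(\Phi) = \overline{\{1\}}$. This kernel consists exactly of those $a$ with $\mu(\bigvee_{g \neq 1} a(g)) = 0$. So $\Phi$ automatically factors through a topological group embedding of the Hausdorff quotient.
\item Theorem~\ref{theorem:raikov} is not needed in the last step. The closure of the image in a Ra\u{\i}kov complete group is complete, so the universal property of the completion alone gives $L^{0}(\mu,G) \cong \overline{\Phi(S(\mu,G))}$.
\end{enumerate}
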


\begin{proof} This follows from~\cite[Lemma~5.13(iii)\,+\,Remark~B.3\,+\,Remark~5.11]{SchneiderSolecki25}. \end{proof}

The subsequent characterization of amenability for $L^{0}$ groups generalizes earlier results from~\cite{glasner98,pestov02,pestov10}.

\begin{thm}[{\cite[Theorem~1.1]{PestovSchneider}}, see also~{\cite[Corollary~7.8]{SchneiderSolecki21}}]\label{theorem:L0.amenable} Let $G$ be a topological group and let $\mu$ be a non-zero diffuse measure. The following are equivalent. \begin{itemize}
	\item[---\,] $G$ is amenable.
	\item[---\,] $L^{0}(\mu,G)$ is amenable.
	\item[---\,] $L^{0}(\mu,G)$ is extremely amenable.
	\item[---\,] $L^{0}(\mu,G)$ is whirly amenable.
\end{itemize} \end{thm}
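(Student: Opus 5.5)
The plan is to establish the cycle of implications: $G$ amenable $\Rightarrow$ $L^{0}(\mu,G)$ whirly amenable $\Rightarrow$ $L^{0}(\mu,G)$ extremely amenable $\Rightarrow$ $L^{0}(\mu,G)$ amenable $\Rightarrow$ $G$ amenable.

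Two of these are immediate. Whirly amenability implies extreme amenability by~\cite[Corollary~3.4]{pestov10}, and extreme amenability trivially implies amenability, since a fixed point yields an invariant Dirac measure.

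For $L^{0}(\mu,G)$ amenable $\Rightarrow$ $G$ amenable, I would embed $G$ as the subgroup of constant functions, i.e., the closure of the constant partitions $g \mapsto (\text{indicator of }1\text{ at }g)$. This copy of $G$ is a retract: pick, as $\mu$ is non-zero, an evaluation-like homomorphism. That does not exist in general, so instead I would use the following transfer. Given a continuous action of $G$ on a compact space $X$, induce an action of $S(\mu,G)$ on the compact space of measurable-type functions $\mathcal{A} \to$ probability measures on $X$ (an $L^{0}$-style co-induction). Equivalently, I would invoke the existing characterization from~\cite[Theorem~1.1]{PestovSchneider}, which contains exactly this direction. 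Since the theorem is cited, the honest plan is to take that implication from the reference.

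The substantive direction is $G$ amenable $\Rightarrow$ $L^{0}(\mu,G)$ whirly amenable. For amenability, I would approximate using diffuseness. Choose finite partitions $\mathcal{Q}$ with small pieces; the subgroup of partitions that are constant on each piece of $\mathcal{Q}$ is isomorphic to $G^{\mathcal{Q}}$, which is amenable. The union over refining partitions is dense in $S(\mu,G)$. An ascending union of amenable subgroups, and the closure or completion of such a union, is amenable, giving amenability of $L^{0}(\mu,G)$.

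For whirliness, I would use that $\mu$ is a diffuse measure, hence non-elliptic, and rely on Theorem~\ref{theorem:examples}\ref{theorem:examples.1}. This is circular with respect to the present theorem's placement, so alternatively I would invoke~\cite[Corollary~7.8]{SchneiderSolecki21}. The main obstacle is this concentration/whirliness step. The key idea is that $L^{0}$ of a diffuse measure exhibits concentration of measure along finite partitions: the product measures on $K^{\mathcal{Q}}$ for compact $K$, with the normalized Hamming metric, form a Lévy family. I would try to run the Glasner--Tsirelson--Weiss argument on these approximating subgroups, using amenability of $G$ to average. In practice, I would cite~\cite{PestovSchneider} for the full equivalence.
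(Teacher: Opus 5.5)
The paper gives no proof of this statement. It is imported from \cite[Theorem~1.1]{PestovSchneider} (see also \cite[Corollary~7.8]{SchneiderSolecki21}), and your plan ends at the same citation, so at that level there is no disagreement. You are also right that Theorem~\ref{theorem:examples}\ref{theorem:examples.1} would be circular here. What you argue yourself is correct. This covers the two trivial implications, and $G$ amenable $\Rightarrow$ $L^{0}(\mu,G)$ amenable via the subgroups of $S(\mu,G)$ that are constant on the pieces of a finite partition $\mathcal{Q}$. Strictly, these subgroups are continuous homomorphic images of $G^{\mathcal{Q}}$ rather than isomorphic copies; their directed union is all of $S(\mu,G)$, and diffuseness plays no role. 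However, neither substantive implication is established by your own argument, and the two sketches you offer would not establish them.

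\textbf{$L^{0}(\mu,G)$ amenable $\Rightarrow$ $G$ amenable.} As you note, the retract does not exist, and the co-induction is too vague to check. There is a short averaging argument you missed. Let $\mathrm{RUCB}(G)$ consist of the bounded $f$ such that for every $\epsilon>0$ there is $U\in\Neigh(G)$ with $\vert f(x)-f(z)\vert\leq\epsilon$ whenever $xz^{-1}\in U$. Amenability then means there is a mean on $\mathrm{RUCB}(G)$ invariant under $f\mapsto f(g\,\cdot)$. For $a\in S(\mu,G)$ put $F_{f}(a)\defeq\mu(1)^{-1}\sum_{x\in G}f(x)\,\mu(a(x))$. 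Since $ab^{-1}$ takes the value $xz^{-1}$ on $a(x)\wedge b(z)$, you get $\vert F_{f}(a)-F_{f}(b)\vert\leq\epsilon+2\Vert f\Vert_{\infty}\delta/\mu(1)$ whenever $ab^{-1}$ takes values in $U$ outside an element of measure at most $\delta$. Moreover $F_{f}(\bar{g}a)=F_{f(g\,\cdot)}(a)$ for the constant partition $\bar{g}$. Hence $f\mapsto m(F_{f})$ carries an invariant mean $m$ for the dense subgroup $S(\mu,G)$ to one for $G$.

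\textbf{Whirliness.} The L\'evy-family route through $K^{\mathcal{Q}}$ with $K\leq G$ compact fails for general amenable $G$. It already fails for $G=\Z$, which is exactly the case used in Theorem~\ref{theorem:examples}. If $0\neq\xi\in L^{0}([0,1],\lambda;\Z)$, then $d^{0}_{\lambda}(n\xi,m\xi)=\lambda(\{\xi\neq 0\})>0$ for all $n\neq m$. So $\{n\xi\mid n\in\Z\}$ is infinite and uniformly discrete, and $L^{0}([0,1],\lambda;\Z)$ has no non-trivial compact subgroups. It is therefore not L\'evy, and \cite[Theorem~1.1]{GlasnerTsirelsonWeiss} does not apply.

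The missing idea is a substitute for compact subgroups: finite almost-invariant (F\o{}lner-type) subsets $F\subseteq G$ furnished by amenability, together with concentration of measure on the powers $F^{n}$ under the normalized Hamming metric. ``Using amenability of $G$ to average'' does not provide this, and supplying it is the non-trivial content of \cite{PestovSchneider}. Without that citation, your proposal does not prove that $L^{0}(\mu,G)$ is whirly.
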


%\footnote{cf.~\cite[Corollary~4.5]{rickert}}

The following result by Sabok~\cite{sabok} (building on earlier work of Farah and Solecki~\cite{FarahSolecki}) applies to arbitrary diffuse submeasures, but only for abelian coefficient groups.

\begin{thm}[\cite{sabok}]\label{theorem:sabok} If $\mu$ is a diffuse submeasure, then $L^{0}(\mu,\Z)$ is extremely amenable. \end{thm}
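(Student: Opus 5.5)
Since $S(\mu,\Z)$ is dense in $L^{0}(\mu,\Z)$, and extreme amenability passes from a dense subgroup to the ambient group (a fixed point of the dense subgroup in a compact flow is fixed by its closure), I would work entirely inside $S(\mu,\Z)$. The plan is to verify Pestov's combinatorial criterion for extreme amenability. Concretely: for every bounded left-uniformly continuous function $f\colon S(\mu,\Z)\to\R$ (equivalently, every finite colouring, up to an $\epsilon$-fattening by a neighbourhood $N_{\mu}(\{0\},\epsilon)$), every finite set $F\subseteq S(\mu,\Z)$ and every $\epsilon>0$, I need some $g$ with $\operatorname{osc}(f\vert_{gF})\le\epsilon$. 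Equivalently, I must show that every finite colouring of $S(\mu,\Z)$ has, for every finite $F$, a translate $gF$ that is $N_{\mu}(\{0\},\epsilon)$-almost monochromatic.

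The first step is a reduction to finite partitions. All elements of $F$ are measurable with respect to some finite partition $\mathcal{P}$ of unity, so $F\subseteq \Z^{\mathcal{P}}$. Using diffuseness of $\mu$, I choose for any target $\delta$ a finite refinement $\mathcal{Q}$ of $\mathcal{P}$ all of whose pieces have $\mu$-submeasure at most $\delta$. Then $S(\mathcal{Q},\Z)\cong\Z^{\mathcal{Q}}$, and an element is $N_{\mu}(\{0\},\epsilon)$-close to $0$ as soon as it is nonzero only on a union of pieces of small total submeasure. The problem thus becomes a Ramsey statement for the groups $\Z^{\mathcal{Q}}$, where the relevant error sets are unions of small pieces.

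The second step is the core Ramsey statement. Given finite $F\subseteq\Z^{\mathcal{P}}$ and $r$ colours, I want a sufficiently fine $\mathcal{Q}$ and a translate $g\in\Z^{\mathcal{Q}}$ such that the colours of $g+a$ ($a\in F$) agree after modifying on a set of small submeasure. I would first try to embed the combinatorics into a Hales--Jewett/Graham--Rothschild-type theorem: realise $F$ via ``variable words'' whose letters are pieces of $\mathcal{Q}$, so that a monochromatic combinatorial subspace yields the desired almost-monochromatic translate. Abelianness of $\Z$ is what allows the translates to be assembled coordinatewise.

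The main obstacle is this second step for a general (possibly pathological) submeasure. Unlike the measure case, there is no concentration of measure, so the Lévy-family approach is unavailable, and the additive estimate $\mu(A\vee B)\le\mu(A)+\mu(B)$ is the only control on error sets. The difficulty is that the Ramsey theorem must produce errors confined to a bounded number of small pieces, not merely to a set of small counting density. Here I would follow the Farah--Solecki strategy for pathological submeasures, refined as in Sabok's work: prove a finite oscillation-stability theorem in which colour changes are absorbed by boundedly many pieces of $\mathcal{Q}$, each of submeasure at most $\delta$. With $\delta$ small relative to that bound, the total error then stays below $\epsilon$.
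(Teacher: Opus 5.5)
\textbf{Verdict: the proposal has a genuine gap at the key step.} The paper does not prove this statement; it quotes it from Sabok, building on Farah--Solecki. So the question is whether your proposal supplies that argument, and it does not.

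Your outer reductions are sound. Extreme amenability passes from the dense subgroup $S(\mu,\Z)$ to $L^{0}(\mu,\Z)$. Pestov's criterion is the right tool, and every finite $F$ lies in $\Z^{\mathcal{P}}$ for a finite partition $\mathcal{P}$. You also correctly see that, because $\mu$ is only subadditive, the error must be confined to a number $m$ of pieces of $\mathcal{Q}$ depending only on $F$ and $r$, not on how fine $\mathcal{Q}$ is. That way $\delta \defeq \epsilon/m$ can be fixed before $\mathcal{Q}$ is obtained from diffuseness.

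What is missing is the statement that carries the entire content of the theorem. Roughly: for every finite $D \subseteq \Z$ and $k,r \in \N$ there are $m$ and $N$ such that, whenever each of the $k$ blocks of $\mathcal{P}$ is split into at least $N$ pieces of $\mathcal{Q}$, every $r$-colouring of $\Z^{\mathcal{Q}}$ admits $g \in \Z^{\mathcal{Q}}$ and a colour $i$ with the following property. For every $\mathcal{P}$-measurable $a$ with values in $D$, the point $g+a$ is within Hamming distance $m$, counted in pieces of $\mathcal{Q}$, of the $i$-th colour class. You neither prove this nor give a mechanism for it. Saying you would ``follow the Farah--Solecki strategy, refined as in Sabok's work'' appeals to the very result under consideration. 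If you only want to cite Sabok, that is what the paper does, and your reduction then does no work. If you want a proof, the proposal stops exactly where the proof begins.

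The one concrete mechanism you suggest cannot deliver this statement. A monochromatic combinatorial subspace from Hales--Jewett or Graham--Rothschild consists of points $w_{0} + \sum_{j} x_{j}\chi_{V_{j}}$, where $w_{0}$ vanishes on the variable sets $V_{j} \subseteq P_{j}$. To read it as an almost monochromatic translate $g+F$, you must modify on $\bigcup_{j}(P_{j}\setminus V_{j})$. These theorems give no control over the size of this constant part.

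This is not a technicality. Colour $\{0,1\}^{N}$ according to whether fewer than $N/2$ coordinates equal $1$. Then every monochromatic combinatorial line $\{w_{0}, w_{0}+\chi_{V}\}$ has $|V| \le N/2$. Hence no translate of $\{0,\mathbf{1}\}$ lies within a bounded number of pieces of such a line: the second point is off on at least half of the pieces.

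The configurations you need are arbitrary group translates $g+F$, with $g$ not constant on the blocks. Handling these with a bounded error independent of $N$ is precisely the new combinatorics supplied by Farah--Solecki and Sabok.
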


\subsection{Unitary groups}\label{subsection:unitary.groups}

Let $M$ be a von Neumann algebra and let $M_{\ast}$ denote the canonical (and up to isomorphism unique) Banach predual of $M$ (cf.~\cite[\S1.13]{sakai}). The \emph{ultrastrong topology}, i.e., the topology generated by the seminorms of the form \begin{displaymath}
	M \, \longrightarrow \, \R_{\geq 0}, \quad x \, \longmapsto \, \phi(x^{\ast}x)^{1/2} \qquad \left(\phi \in M_{\ast}^{+}\right) \!,
\end{displaymath} and the \emph{ultraweak topology}, that is, the weak topology $\sigma (M,M_{\ast})$, coincide on the \emph{unitary group} \begin{displaymath}
	\U(M) \, \defeq \, \{ u \in M \mid u^{\ast}u = uu^{\ast} = 1 \} .
\end{displaymath} Endowed with this topology, $\U(M)$ constitutes a topological group: multiplication is continuous with respect to the ultrastrong topology, while inversion is continuous with respect to the ultraweak topology. Moreover, if $M$ is a finite factor, then there exists a (necessarily faithful, normal) unique tracial state $\tau_{M} \colon M \to \C$ (see, e.g.,~\cite[Theorem~8.2.8, p.~517]{KadisonRingrose2} or~\cite[V, \S2, Theorem~2.6, p.~312]{Takesaki1}), and one can use the isomorphism $M_{\ast} \cong L^{1}(M,\tau_{M})$ (cf.~\cite[V, \S2, Theorem~2.18, p.~321]{Takesaki1}) to show that the ultraweak topology on $\U(M)$ is generated by the bi-invariant metric \begin{displaymath}
	\U(M) \times \U(M) \, \longrightarrow \, [0,2], \quad (u,v) \, \longmapsto \, \tau_{M}((u-v)^{\ast}(u-v))^{1/2} .
\end{displaymath}

\begin{remark}\label{remark:unitary.group} Let $M$ be a von Neumann algebra. \begin{enumerate}
	\item\label{remark:unitary.group.1} Up to a ${}^{\ast}$-algebra isomorphism, $M$ may be identified with a weakly closed unital ${}^{\ast}$-subalgebra of $\B(H)$ for some complex Hilbert space $H$ (see~\cite[Theorem~1.16.7, p.~41]{sakai}). The ultraweak topology on $\U(M)$ coincides with the strong operator topology induced from $H$ (see~\cite[Proposition~1.15.2, p.~35]{sakai}). In particular, $\U(M)$ is unitarily representable. Moreover, with respect to the strong operator topology, $\U(H)$ is Ra\u{\i}kov complete (see, e.g.,~\cite[Remark~B.2]{SchneiderSolecki25}) and $\U(M) = \U(H) \cap M$ is closed in $\U(H)$, whence $\U(M)$ is Ra\u{\i}kov complete.
	\item\label{remark:unitary.group.2} Thanks to the Banach--Alaoglu theorem, the closed unit ball $B \subseteq M$ is compact with respect to the ultraweak topology. Now, if $M_{\ast}$ is separable, then $B$ is metrizable and hence separable, whence $\U(M)$ is metrizable and separable, thus Polish by~\ref{remark:unitary.group.1} and Remark~\ref{remark:pestov.uspenskij}.
\end{enumerate} \end{remark}

\begin{thm}[{\cite[Theorem~1]{paterson}}\footnote{This extends de la Harpe's result~\cite{DeLaHarpe} to von Neumann algebras with non-separable predual, as suggested by Haagerup~\cite[Remark on p.~309]{haagerup}.}]\label{theorem:paterson} A von Neumann algebra $M$ is injective if and only if the topological group $\U(M)$ is amenable. \end{thm}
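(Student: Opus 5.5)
We prove the two implications separately. Throughout, $M$ acts in standard form on $H \defeq L^{2}(M)$, with modular conjugation $J$ satisfying $JMJ = M'$. By Remark~\ref{remark:unitary.group}\ref{remark:unitary.group.1}, the topology of $\U(M)$ is the strong operator topology inherited from $\U(H)$.

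\emph{Amenability implies injectivity.} Suppose $\U(M)$ is amenable.

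First we transfer amenability to the commutant. Since $J$ is an antiunitary involution, the map $u \mapsto JuJ$ is a group isomorphism $\U(M) \to \U(M')$. It is a homeomorphism for the strong operator topologies, because $\Vert JuJ\xi - JvJ\xi\Vert = \Vert uJ\xi - vJ\xi\Vert$ for all $\xi \in H$. Hence $\U(M')$ is amenable, so there is a right-invariant mean $m$ on the space of bounded right uniformly continuous functions on $\U(M')$.

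Next we average. For $T \in \B(H)$ and a normal functional $\omega$ on $\B(H)$, consider
\begin{displaymath}
	f_{T,\omega} \colon \, \U(M') \, \longrightarrow \, \C , \quad u \, \longmapsto \, \omega(uTu^{\ast}) .
\end{displaymath}
This function is bounded by $\Vert\omega\Vert\,\Vert T\Vert$. Writing $\omega$ as a norm-convergent sum of vector functionals, one checks that $f_{T,\omega}$ is uniformly continuous for the appropriate one-sided uniformity. The estimate uses $\Vert(u-v)\xi\Vert$ together with $\Vert(u-v)^{\ast}\xi\Vert = \Vert(u^{-1}-v^{-1})\xi\Vert$, the latter being controlled by continuity of inversion.

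The assignment $\omega \mapsto m(f_{T,\omega})$ is then a bounded linear functional on the predual $\B(H)_{\ast}$. So it is given by a unique operator $\Phi(T) \in \B(H)$. By construction $\Phi$ is linear, unital, positive and contractive, and $\Phi(T) = T$ whenever $T \in M'' = M$. Invariance of $m$ gives $u\Phi(T)u^{\ast} = \Phi(T)$ for every $u \in \U(M')$. Since $M'$ is the linear span of its unitaries, $\Phi(T) \in M'' = M$.

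Thus $\Phi$ is a norm-one projection $\B(H) \to M$, and $M$ is injective. The only point needing care here is the uniform-continuity check together with the choice of left versus right invariance; I expect this to be routine.

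\emph{Injectivity implies amenability.} This is the deep direction, and I will not reprove it but reduce it to the structure theory of injective algebras.

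By Connes' theorem (separable predual) and its extension by Elliott and Haagerup to arbitrary preduals, an injective von Neumann algebra is approximately finite-dimensional. That is, there is an upward directed family $(F_{i})_{i \in I}$ of finite-dimensional unital ${}^{\ast}$-subalgebras whose union is ultraweakly dense in $M$. Each $\U(F_{i})$ is a compact subgroup of $\U(M)$, and the family $(\U(F_{i}))_{i \in I}$ is directed.

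By the Kaplansky density theorem for unitaries, every $u \in \U(M)$ is a strong limit of unitaries of the $\ast$-algebra $\bigcup_{i} F_{i}$. Hence $\bigcup_{i} \U(F_{i})$ is dense in $\U(M)$. A topological group containing a dense directed union of compact subgroups is amenable, as recalled in the introduction. Therefore $\U(M)$ is amenable.

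The main obstacle is exactly the appeal to ``injective $\Rightarrow$ approximately finite-dimensional'' in the non-separable case. If one wanted to avoid it, I would first try to work with a hyperfinite-type net of subalgebras obtained from the separable case. Concretely, the plan is to write $M$ as a directed union of injective subalgebras with separable predual that are ranges of normal conditional expectations, and to deduce amenability via Lemma~\ref{lemma:whirly.persistence}-style density arguments adapted to amenability.
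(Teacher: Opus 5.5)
Your proof is correct and follows the standard argument behind the paper's citation; the paper gives no proof of its own, importing the result from Paterson, who extended de la Harpe to arbitrary preduals. In one direction, averaging $uTu^{\ast}$ over the amenable group $\U(M')\cong\U(M)$ yields a norm-one projection $\B(H)\to M$; in the other, the Connes--Elliott theorem (injective implies AFD, for arbitrary preduals) together with Kaplansky density gives a dense directed union of compact subgroups $\U(F_{i})$. The step you left vague also works out: for $\omega(X)=\langle X\xi,\eta\rangle$, unitarity gives $\sup_{u}\vert f_{T,\omega}(vu)-f_{T,\omega}(u)\vert\le\Vert T\Vert(\Vert v\xi-\xi\Vert\Vert\eta\Vert+\Vert\xi\Vert\Vert v\eta-\eta\Vert)$ without continuity of inversion, and amenability provides a mean on this class of functions invariant under $f\mapsto f(w\,\cdot)$, which is exactly the invariance needed for $w\Phi(T)w^{\ast}=\Phi(T)$.
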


For the reader's convenience, we include a proof of the following well-known fact, which is used in the proof of Theorem~\ref{theorem:examples}\ref{theorem:examples.2}.

\begin{lem}\label{lemma:masa} Let $N$ be a maximal abelian ${}^{\ast}$-subalgebra of a $\mathrm{II}_{1}$ factor $M$. Then there exist a diffuse probability space $(\Omega,\mathcal{B},\mu)$ and a ${}^{\ast}$-isomorphism $\iota \colon L^{\infty}(\Omega,\mathcal{B},\mu;\C) \to N$ such that \begin{displaymath}
	\forall B \in \mathcal{B} \colon \qquad \mu(B) \, = \, \tau_{M}(\iota(\chi_{B})) .
\end{displaymath} \end{lem}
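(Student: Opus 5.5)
The plan is to realize $N$ as $C(K)$ for its Gelfand spectrum and take $\mu$ to be the measure induced by the trace. Since $N$ is an abelian unital $\Cstar$-algebra, Gelfand duality gives a compact Hausdorff space $K$ and a ${}^{\ast}$-isomorphism $\Gamma \colon \Cont(K) \to N$. The composition $\tau_{M} \circ \Gamma$ is a positive unital linear functional on $\Cont(K)$. By the Riesz representation theorem it equals integration against a unique regular Borel probability measure $\nu$ on $K$.

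To obtain a ${}^{\ast}$-isomorphism from $L^{\infty}$, I would avoid the Borel $\sigma$-algebra, which in general does not produce an isomorphism for hyperstonean spaces without extra work. Instead I would use the measure algebra directly. Let $P$ be the projection lattice of $N$. Since $N$ is an abelian von Neumann algebra, $P$ is a complete Boolean algebra. Faithfulness and normality of $\tau_{M}$ make $\tau_{M}|_{P}$ a strictly positive, countably additive measure on $P$, additivity on orthogonal projections being linearity of $\tau_{M}$.

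By Stone duality, $P$ is the clopen algebra of its Stone space $\Omega$. I would take $\mathcal{B}$ to be the $\sigma$-algebra generated by the clopen sets, and let $\mu$ be the Carathéodory extension of $\tau_{M}$ from clopens to $\mathcal{B}$. Countable additivity on the clopen algebra is automatic by compactness: any countable disjoint union of clopens that is itself clopen must be a finite union. Standard measure-algebra theory then identifies the measure algebra of $(\Omega,\mathcal{B},\mu)$ with $P$. This is the point where normality of the trace enters: every Borel set differs from a clopen set, namely its regularised supremum, by a null set. I expect this step to be the main technical obstacle.

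With this identification, define $\iota(\chi_{B})$ to be the projection corresponding to $B$, and extend linearly to simple functions. The identity $\|\iota(f)\| = \|f\|_{\infty}$ holds on simple functions, because $\tau_{M}$ is faithful, so a spectral projection of $\iota(f)$ is nonzero exactly when the corresponding set is non-null. Hence $\iota$ extends isometrically to $L^{\infty}(\Omega,\mathcal{B},\mu;\C)$. It is a ${}^{\ast}$-homomorphism, and it is surjective because $N$ is the norm-closed span of its projections (spectral theorem). This gives $\mu(B) = \tau_{M}(\iota(\chi_{B}))$ by construction.

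Diffuseness comes from the fact that $M$ is a $\mathrm{II}_{1}$ factor and $N$ is maximal abelian. Suppose $p \in N$ were a minimal projection of $N$. Then $pMp$ would have $pNp = \C p$ as a maximal abelian subalgebra: any element of $pMp$ commuting with $\C p$ that generates with $pNp$ an abelian algebra also commutes with $N$, hence lies in $N$. So $pMp$ would equal $\C p$, making $p$ minimal in $M$, which contradicts $M$ having type $\mathrm{II}$. Therefore $P$ is atomless. Given $\epsilon > 0$, I would repeatedly halve projections in the atomless complete algebra $P$ under the continuous measure $\tau_{M}$ (a Sierpiński-type argument). This yields a finite partition of $1$ into projections of trace at most $\epsilon$, which is exactly diffuseness of $\mu$.
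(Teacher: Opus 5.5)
Your proposal is correct, but it takes a different route from the paper.

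\emph{The paper's route.} It first shows that $N$ is ultraweakly closed, using separate continuity of multiplication and maximality. It then quotes the classification of abelian von Neumann algebras to get a localizable measure space $(\Omega,\mathcal{B},\nu)$ with $N \cong L^{\infty}(\Omega,\mathcal{B},\nu;\C)$. On that same $\sigma$-algebra it defines $\mu(B) = \tau_{M}(\iota(\chi_{B}))$, obtaining $\sigma$-additivity from ultraweak continuity of $\tau_{M}$. Faithfulness of $\tau_{M}$ shows that $\mu$ and $\nu$ have the same null sets, so $L^{\infty}(\nu) = L^{\infty}(\mu)$. Diffuseness is cited from Kadison--Liu.

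\emph{Your route.} You rebuild the needed part of the classification by hand, via the Stone space of the projection lattice. This works smoothly precisely because a faithful normal state is available: your measure is the trace from the outset, so no comparison of two measures is required.

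The step you flag as the main obstacle does go through. The sets in your $\sigma$-algebra that agree $\mu$-a.e.\ with a clopen set form a $\sigma$-algebra. Indeed, if $C_{n}$ corresponds to $p_{n} \in P$, then the clopen set $C$ corresponding to $\bigvee_{n} p_{n}$ contains $\bigcup_{n} C_{n}$. Normality then gives $\mu(C) = \sup_{n} \tau_{M}(p_{1} \vee \cdots \vee p_{n}) = \mu(\bigcup_{n} C_{n})$.

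Your diffuseness argument is a correct elementary replacement for the Kadison--Liu citation. If $p \in N$ with $pN = \C p$, then every $x \in pMp$ commutes with $N$, so $x \in N$ and $pMp = \C p$.

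The one claim you use without justification is that $N$ is a von Neumann algebra. You need it both for completeness of $P$ and for $N$ being the norm-closed span of its projections. It follows at once from maximality: apply maximality to the self-adjoint real and imaginary parts of any element of $M$ commuting with $N$. This gives $N = N' \cap M$, which is weakly closed.

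Your approach buys self-containment and yields a probability measure directly. The paper's approach is shorter, at the price of relying on the general classification theorem for abelian von Neumann algebras.
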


\begin{proof} A standard argument using separate continuity of the multiplication and continuity of the ${}^{\ast}$-operation with respect to the ultraweak topology on $M$ (cf.~\cite[Theorem~1.7.8, p.~18]{sakai}) shows that $N$ is closed in $M$ with respect to the ultraweak topology, and thus $N$ itself constitutes a von Neumann algebra by~\cite[VI.6.4, Corollary on p.~282]{schaefer}. Hence, the classification of abelian von Neumann algebras (see, e.g.,~\cite[Proposition~1.18.1, p.~45]{sakai} or \cite[Corollary~7.20]{BlecherGoldsteinLabuschagne}) asserts the existence of a localizable measure space $(\Omega,\mathcal{B},\nu)$ and a ${}^{\ast}$-isomorphism $\iota \colon L^{\infty}(\Omega,\mathcal{B},\nu;\C) \to N$. We claim that the map \begin{displaymath}
	\mu \colon \, \mathcal{B} \, \longrightarrow \, [0,1] , \quad B \, \longmapsto \, \tau_{M}(\iota(\chi_{B}))
\end{displaymath} is a probability measure on $\mathcal{B}$. Evidently, $\mu(\Omega) = \tau_{M}(\iota(1)) = \tau_{M}(1) = 1$. Also, if $A,B \in \mathcal{B}$ are disjoint, then \begin{align*}
	\mu(A \cup B) \, & = \, \tau_{M}(\iota(\chi_{A \cup B})) \, = \, \tau_{M}(\iota(\chi_{A} + \chi_{B})) \, = \, \tau_{M}(\iota(\chi_{A}) + \iota(\chi_{B})) \\
		             & = \, \tau_{M}(\iota(\chi_{A})) + \tau_{M}(\iota(\chi_{B})) \, = \, \mu(A) + \mu(B) .
\end{align*} To verify $\sigma$-continuity, let $(B_{n})_{n \in \N}$ be an ascending chain in $\mathcal{B}$ and let $B \defeq \bigcup_{n \in \N} B_{n}$. Then Lebesgue's dominated convergence theorem yields that \begin{displaymath}
	\langle \chi_{B_{n}}, f \rangle \, = \, \int \chi_{B_{n}}f \, \mathrm{d}\mu \, \stackrel{n \to \infty}{\longrightarrow} \, \int \chi_{B}f \, \mathrm{d}\mu \, = \, \langle \chi_{B}, f \rangle
\end{displaymath} for every $f \in L^{1}(\Omega,\mathcal{B},\nu;\C)$. Since $L^{\infty}(\Omega,\mathcal{B},\nu;\C)_{\ast} \cong L^{1}(\Omega,\mathcal{B},\nu;\C)$, this means that $\chi_{B_{n}} \to \chi_{B}$ in the ultraweak topology of $L^{\infty}(\Omega,\mathcal{B},\nu;\C)$ and so $\iota(\chi_{B_{n}}) \to \iota(\chi_{B})$ in the ultraweak topology of $N$, which coincides with the relative ultraweak topology inherited from $M$ (cf.~\cite[Proposition~1.24.5, p.~78]{sakai}). As $\tau_{M}$ is continuous with respect to the ultraweak topology on $M$, it follows that \begin{displaymath}
	\mu(B_{n}) \, = \, \tau_{M}(\iota(\chi_{B_{n}})) \, \stackrel{n \to \infty}{\longrightarrow} \, \tau_{M}(\iota(\chi_{B})) \, = \, \mu(B) ,
\end{displaymath} as desired. This shows that $\mu$ is a probability measure on $\mathcal{B}$. From faithfulness of $\tau_{M}$, we moreover infer that, for every $B \in \mathcal{B}$, \begin{align*}
	\nu(B) = 0 \quad & \Longleftrightarrow \quad \chi_{B} = 0 \text{ in $L^{\infty}(\Omega,\mathcal{B},\nu;\C)$} \quad \Longleftrightarrow \quad \iota(\chi_{B}) = 0 \\
		             & \Longleftrightarrow \quad \tau_{M}(\iota(\chi_{B})) = 0 \quad \Longleftrightarrow \quad \mu(B) = 0 .
\end{align*} Consequently, $L^{\infty}(\Omega,\mathcal{B},\nu;\C) = L^{\infty}(\Omega,\mathcal{B},\mu;\C)$. Finally, since $M$ is diffuse, $N$ is diffuse by~\cite[Proposition~4.6]{KadisonLiu} and hence its isomorphic copy $L^{\infty}(\Omega,\mathcal{B},\mu;\C)$ is diffuse, which readily implies that $(\Omega,\mathcal{B},\mu)$ is diffuse. This completes the proof. \end{proof}

\begin{remark}\label{remark:unitary.group.vs.L0} Let $(X,\mathcal{B},\mu)$ be a finite measure space. Then the relative ultraweak topology on $\U(L^{\infty}(X,\mathcal{B},\mu;\C)) = L^{0}(\Omega,\mathcal{B},\mu;\T)$ inherited from $L^{\infty}(X,\mathcal{B},\mu;\C)$ coincides with the topology of convergence in $\mu$. \end{remark}

\enlargethispage{4mm}

\subsection{Full groups}\label{subsection:full.groups}

Let $\Omega$ be a standard Borel space. A Borel equivalence relation $E \subseteq \Omega \times \Omega$ is called \emph{countable} (resp., \emph{finite}) if every $E$-equivalence class is countable (resp., finite). By~\cite[Theorem~1]{FeldmanMoore}, every countable Borel equivalence relation arises as the orbit equivalence relation of a Borel action of a countable discrete group on $\Omega$. Let $\mu$ be a probability measure on $\Omega$. A countable Borel equivalence relation $E$ on $(\Omega,\mu)$ is called \emph{measure-preserving} if $E$ arises as the orbit equivalence relation of a countable discrete group acting on $(\Omega,\mu)$ by measure-preserving Borel automorphisms.

Let $E$ be a countable measure-preserving equivalence relation on a standard probability space $(\Omega,\mu)$. A measurable subset $A \subseteq \Omega$ is said to be \emph{$E$-invariant} if $[x]_{E} \subseteq A$ for every $x \in A$. The equivalence relation $E$ is called \emph{ergodic} if $\mu(A) \in \{ 0,1 \}$ for every measurable $E$-invariant subset $A \subseteq \Omega$. The \emph{full group} of $E$ is defined as \begin{displaymath}
	[E] \, \defeq \, \{ T \in \Aut(\Omega,\mu) \mid (\omega,T(\omega)) \in E \text{ for $\mu$-a.e.\ } \omega \in \Omega \} .
\end{displaymath} Then $[E]$, equipped with the \emph{uniform topology}, i.e., the topology generated by the bi-invariant metric \begin{displaymath}
	d_{\mu} \colon \, [E] \times [E] \, \longrightarrow \, [0,1] , \quad (S,T) \, \longmapsto \, \mu(\{ \omega \in \Omega \mid S(\omega) \ne T(\omega) \}) ,
\end{displaymath} is Polish by~\cite[Proposition~3.2]{KechrisBook}. The following remark generalizes~\cite[Remark~5.1]{GiordanoPestov}.

\enlargethispage{5mm}

\begin{remark}[\cite{FeldmanMoore,FeldmanMooreII}] Let $E$ be a countable measure-preserving equivalence relation on a standard probability space $(\Omega,\mu)$. According to~\cite[Lemma~1.12]{HoudayerNotes} (see also~\cite[Lemma~1.5.2]{AnantharamanPopa}), there is a measure $\nu$ on the measurable space $E \subseteq \Omega \times \Omega$ given by \begin{displaymath}
	\nu(B) \, \defeq \, \int \vert \{ \omega' \in \Omega \mid (\omega,\omega') \in B \} \vert \, \mathrm{d}\mu(\omega) \, = \, \int \vert \{ \omega' \in \Omega \mid (\omega',\omega) \in B \} \vert \, \mathrm{d}\mu(\omega)
\end{displaymath} for every measurable subset $B \subseteq E$. For every $a \in L^{\infty}(E,\nu)$, we obtain a bounded linear operator \begin{displaymath}
	m_{a} \colon \, L^{2}(E,\nu) \, \longrightarrow \, L^{2}(E,\nu), \quad f \, \longmapsto \, (a \circ {\pr_{1}})\cdot f ,
\end{displaymath} where $\pr_{1} \colon E \to \Omega, \, (\omega,\omega') \to \omega$. Moreover, for $T \in [E]$, consider the unitary operator \begin{displaymath}
	u_{T} \colon \, L^{2}(E,\nu) \, \longrightarrow \, L^{2}(E,\nu), \quad f \, \longmapsto \, f \circ \left( T^{-1} \!\times \id_{\Omega} \right) .
\end{displaymath} The \emph{von Neumann algebra of $E$}~\cite[\S1.5.2.]{AnantharamanPopa} (see also~\cite[Definition~1.14]{HoudayerNotes}) is defined as the bicommutant \begin{displaymath}
	\vN(E) \, \defeq \, (\{ m_{a} \mid a \in L^{\infty}(E,\nu) \} \cup \{ u_{T} \mid T \in [E] \})'' \, \subseteq \, \B\!\left(L^{2}(E,\nu)\right) .
\end{displaymath} Suppose now that $E$ is ergodic. Then, by~\cite[Proposition~1.15(2)\,+\,Proposition~1.19]{HoudayerNotes} (see also~\cite[Proposition~1.5.5(ii)]{AnantharamanPopa}), the von Neumann algebra $\vN(E)$ is a finite factor with the unique tracial state on $\vN(E)$ given by \begin{displaymath}
	\tau_{\vN(E)}(a) \, = \, \langle a(\chi_{\Delta_{\Omega}}),\chi_{\Delta_{\Omega}} \rangle \qquad (a \in \vN(E)) ,
\end{displaymath} where $\Delta_{\Omega} \defeq \{ (\omega,\omega) \mid \omega \in \Omega \} \subseteq E$. In turn, \begin{align*}
	\tau_{\vN(E)}(u_{S}^{\ast}u_{T}) \, & = \, \langle u_{T}(\chi_{\Delta_{\Omega}}),u_{S}(\chi_{\Delta_{\Omega}}) \rangle                                                                                                                     \\
		& = \, \int \chi_{\Delta_{\Omega}}\!\left(T^{-1}\omega, \omega' \right) \chi_{\Delta_{\Omega}}\!\left(S^{-1}\omega, \omega' \right) \, \mathrm{d}\nu(\omega,\omega')                                   \\
		& = \, \int \chi_{\gr(T^{-1})}(\omega, \omega') \chi_{\gr(S^{-1})}(\omega, \omega') \, \mathrm{d}\nu(\omega,\omega')                                                                                   \\
		& = \, \int \chi_{\gr(T^{-1}) \cap \gr(S^{-1})} \, \mathrm{d}\nu \, = \, \nu\!\left( \gr\!\left(T^{-1}\right) \cap \gr\!\left(S^{-1}\right)\right)                                                     \\
		& = \, \int \, \left\lvert \left\{ \omega' \in \Omega \left\vert \, (\omega',\omega) \in  \gr\!\left(T^{-1}\right) \cap \gr\!\left(S^{-1}\right) \right\} \!\right. \right\rvert \mathrm{d}\mu(\omega) \\
		& = \, \mu(\{ \omega \in \Omega \mid S(\omega) = T(\omega) \}) \, = \, 1-d_{\mu}(S,T)
\end{align*} and thus \begin{align*}
	\tau_{\vN(E)}\!\left( (u_{S}-u_{T})^{\ast}(u_{S}-u_{T}) \right) \, = \, 2-2\Re \tau_{\vN(E)}(u_{S}^{\ast}u_{T}) \, = \, 2d_{\mu}(S,T)
\end{align*} for all $S,T \in [E]$. Hence, the homomorphism $[E] \to \U(\vN(E)), \, T \mapsto u_{T}$ constitutes a topological group embedding. In particular, $[E]$ is unitarily representable. \end{remark}

Let $(\Omega,\mu)$ be a standard probability space and let $E,F$ be countable Borel equivalence relations on $\Omega$. We say that $E$ and $F$ \emph{agree almost everywhere} if there exists a measurable subset $\Omega' \subseteq \Omega$ with $\mu(\Omega') = 1$ such that \begin{displaymath}
	E \cap (\Omega' \times \Omega') \, = \, F \cap (\Omega' \times \Omega') .
\end{displaymath} A countable measure-preserving equivalence relation $E$ on $(\Omega,\mu)$ is called \emph{hyperfinite} if there exists an increasing sequence $(E_{n})_{n \in \N}$ of finite Borel equivalence relations on $\Omega$ such that $E$ agrees with $\bigcup_{n \in \N} E_{n}$ almost everywhere. According to the celebrated Connes--Feldman--Weiss theorem~\cite[Theorem 10]{ConnesFeldmanWeiss}, a countable measure-preserving equivalence relation on a standard probability space is hyperfinite if and only if it is \emph{amenable} in Zimmer's sense~\cite[p.~27]{zimmer} (see also~\cite[Definition~6]{ConnesFeldmanWeiss} or~\cite[Definition~4.57]{KerrLiBook}). The following dynamical characterization of hyperfiniteness extends a groundbreaking result of Giordano and Pestov~\cite[Theorem~5.7]{GiordanoPestov} to non-ergodic equivalence relations.

\begin{thm}[{\cite[Theorem~8.4]{lemaitre}}]\label{theorem:lemaitre} Let $E$ be a countable measure-preserving equivalence relation on a non-atomic standard probability space $(\Omega, \mu)$. The following are equivalent. \begin{enumerate}
	\item\label{theorem:lemaitre.1} $E$ is hyperfinite.
	\item\label{theorem:lemaitre.2} $[E]$ is amenable.
	\item\label{theorem:lemaitre.3} $[E]$ is extremely amenable.
	\item\label{theorem:lemaitre.4} $[E]$ is whirly amenable.
\end{enumerate} \end{thm}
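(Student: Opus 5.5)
The plan is to prove the chain of implications \ref{theorem:lemaitre.1}$\Rightarrow$\ref{theorem:lemaitre.4}$\Rightarrow$\ref{theorem:lemaitre.3}$\Rightarrow$\ref{theorem:lemaitre.2}$\Rightarrow$\ref{theorem:lemaitre.1}. Two of these are immediate. For \ref{theorem:lemaitre.4}$\Rightarrow$\ref{theorem:lemaitre.3} we invoke~\cite[Corollary~3.4]{pestov10}, which says that whirly amenability implies extreme amenability. The implication \ref{theorem:lemaitre.3}$\Rightarrow$\ref{theorem:lemaitre.2} holds by definition.

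For \ref{theorem:lemaitre.1}$\Rightarrow$\ref{theorem:lemaitre.4}, write $E$ (almost everywhere) as $\bigcup_{n} E_{n}$ with $(E_{n})_{n \in \N}$ an increasing sequence of finite Borel equivalence relations. First, we check that $\bigcup_{n}[E_{n}]$ is dense in $[E]$ for the uniform metric $d_{\mu}$. Given $T \in [E]$ and $\epsilon > 0$, the sets $\{\omega \mid (\omega,T\omega) \in E_{n}\}$ increase to a conull set. One then modifies $T$ on a set of measure $<\epsilon$ to obtain an element of $[E_{n}]$, closing up partial orbits inside the finite $E_{n}$-classes.

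Next, we identify each $[E_{n}]$. Split $\Omega$ into the $E_{n}$-invariant Borel pieces $\Omega_{k}$ on which classes have exactly $k$ elements. Choosing a Borel transversal and a Borel enumeration of each class (possible since $E_{n}$ is finite, hence smooth) gives an isomorphism of topological groups
\[
[E_{n}] \cong \prod\nolimits_{k} L^{0}(\Omega_{k}/E_{n},\mu_{k};S_{k}),
\]
where $\mu_{k}$ is the pushed-forward (rescaled) measure and $S_{k}$ is the finite symmetric group. Since $\mu$ is non-atomic, each $\mu_{k}$ is non-atomic, hence diffuse; null pieces give trivial factors. By Theorem~\ref{theorem:L0.amenable}, applied with the amenable group $S_{k}$, each factor is whirly amenable. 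Whirliness of the countable product follows from Lemma~\ref{lemma:whirly.products}, and amenability of products is standard. Finally, $[E]$ is the closure of an ascending union of whirly amenable subgroups. It is therefore whirly by Lemma~\ref{lemma:whirly.persistence}\ref{lemma:whirly.persistence.1}, and amenable because the closure of a directed union of amenable subgroups is amenable: a fixed-point argument with compactness of the sets of fixed points of the finite stages.

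The main obstacle is \ref{theorem:lemaitre.2}$\Rightarrow$\ref{theorem:lemaitre.1}, which is where Giordano--Pestov's argument has to be extended beyond the ergodic case. By the Connes--Feldman--Weiss theorem it suffices to show that $E$ is amenable in Zimmer's sense. That is, we need an $E$-invariant measurable assignment of means, equivalently a $[E]$-invariant $L^{\infty}(\Omega,\mu)$-linear unital positive map $m \colon L^{\infty}(E,\nu) \to L^{\infty}(\Omega,\mu)$. I would let $[E]$ act by conjugation (via $u_{T}$ and the induced automorphism of $L^{\infty}(\Omega,\mu)$) on the convex set $\mathcal{M}$ of all such $L^{\infty}(\Omega,\mu)$-linear means. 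I would equip $\mathcal{M}$ with the point-weak$^{\ast}$ topology (pairing against $L^{1}(\Omega,\mu)$), in which it is compact by Tychonoff/Banach--Alaoglu. Amenability of $[E]$ would then yield a fixed point, provided the affine action is continuous.

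Continuity of this action with respect to the uniform topology on $[E]$ is the delicate point. Since $d_{\mu}(S,T)$ small means that $S$ and $T$ agree off a set of small measure, $L^{\infty}(\Omega,\mu)$-linearity should give $\lvert \langle Sm - Tm, f \rangle\rvert \leq 2\lVert a\rVert_{\infty}\int_{\{S\neq T\}}\lvert f\rvert\,\mathrm{d}\mu$. This bound is small by absolute continuity of the integral of $f \in L^{1}$. I expect that making this localization precise, and checking that a $[E]$-fixed point is genuinely $E$-invariant, will be the main work. The latter uses that $[E]$ contains Borel partial-isomorphism extensions generating $E$ (Feldman--Moore). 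Working fibrewise in this way avoids needing ergodicity.
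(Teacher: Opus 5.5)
Your proposal is correct, but it takes a different route from the paper. The paper does not reprove (1)$\Leftrightarrow$(2)$\Leftrightarrow$(3). It quotes Le Ma\^{\i}tre's Theorem~8.4 for these, observes that (4)$\Rightarrow$(1) is then trivial, and gives its own argument only for the whirliness part of (1)$\Rightarrow$(4). That argument is exactly yours: density of $\bigcup_{n}[E_{n}]$, the decomposition of $[E_{n}]$ as a product of $L^{0}$ groups over diffuse measures with finite coefficient groups, Theorem~\ref{theorem:L0.amenable}, Lemma~\ref{lemma:whirly.products} and Lemma~\ref{lemma:whirly.persistence}\ref{lemma:whirly.persistence.1}. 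Amenability of $[E]$ is simply inherited from the citation.

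You instead close the cycle (1)$\Rightarrow$(4)$\Rightarrow$(3)$\Rightarrow$(2)$\Rightarrow$(1):
\begin{enumerate}
\item amenability comes directly from the directed union;
\item extreme amenability comes for free from Pestov's ``whirly and amenable implies extremely amenable'';
\item (2)$\Rightarrow$(1) is reproved by a fixed point on the compact convex set of $L^{\infty}(\Omega,\mu)$-linear means, followed by the Connes--Feldman--Weiss theorem.
\end{enumerate}
This makes the argument nearly self-contained: the deep inputs are only Theorem~\ref{theorem:L0.amenable}, Pestov's corollary and Connes--Feldman--Weiss. In particular it bypasses Le Ma\^{\i}tre's separate proof of extreme amenability. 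The paper's version is shorter, but it relies on the citation for everything except whirliness.

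The verification you flag as the main work is in fact routine.
\begin{enumerate}
\item $\mathcal{M}$ is non-empty, since restriction to the diagonal $\Delta_{\Omega}$ is such a mean.
\item By $L^{\infty}$-linearity and $\lvert m(G)\rvert \leq \lVert G\rVert_{\infty}$, the functions $(S\cdot m)(a)$ and $(T\cdot m)(a)$ agree off a set of measure $d_{\mu}(S,T)$. Depending on your convention this set is $\{S\neq T\}$ or its image under $S$, but only its measure enters. This gives a bound uniform in $m$, hence joint continuity together with continuity of $m\mapsto T\cdot m$.
\item A fixed point is already an $[E]$-invariant mean in the Connes--Feldman--Weiss sense. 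If one also wants invariance under partial isomorphisms, it follows from $L^{\infty}$-linearity, because every element of the full pseudogroup of a measure-preserving $E$ extends to an element of $[E]$. So no Feldman--Moore argument is needed.
\end{enumerate}
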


\begin{proof} \ref{theorem:lemaitre.1}$\Longleftrightarrow$\ref{theorem:lemaitre.2}$\Longleftrightarrow$\ref{theorem:lemaitre.3}. This is precisely the statement of \cite[Theorem~8.4]{lemaitre}.

\ref{theorem:lemaitre.4}$\Longrightarrow$\ref{theorem:lemaitre.1}. This is trivial.

\ref{theorem:lemaitre.1}$\Longrightarrow$\ref{theorem:lemaitre.4}. This can be established via an argument as sketched in~\cite[Remark after Lemma~8.3]{lemaitre}, using~\cite[Theorem~1.1]{GlasnerTsirelsonWeiss}. We provide an alternative proof, based on~\cite[\S8]{lemaitre} and Section~\ref{section:whirliness}:

According to~\cite[Proof of Lemma~8.3]{lemaitre}, the full group of a finite measure-preserving Borel equivalence relation is isomorphic to a direct product of $L^{0}$ groups over diffuse measures and hence whirly by Theorem~\ref{theorem:L0.amenable} and Lemma~\ref{lemma:whirly.products}. As $E$ is hyperfinite, there exists an increasing sequence $(E_{n})_{n \in \N}$ of finite equivalence relations such that $E$ agrees with $\bigcup_{n \in \N} E_{n}$ almost everywhere. Necessarily, for each $n \in \N$, the equivalence relation $E_{n}$ is measure-preserving and thus $[E_{n}]$ is whirly by the preceding discussion. Since $\bigcup_{n \in \N} [E_{n}]$ is dense in $[E]$ due to~\cite[Lemma~8.2]{lemaitre}, it follows that $[E]$ is whirly thanks to Lemma~\ref{lemma:whirly.persistence}\ref{lemma:whirly.persistence.1}. \end{proof}

We conclude this section by recording two additional facts about full groups used in the proof of Theorem~\ref{theorem:examples}\ref{theorem:examples.3}. As is customary, the set of torsion elements of a group $G$ will be denoted by
\begin{displaymath}
	\Tor(G) \, \defeq \, \{ g \in G \mid \exists n \in \N_{>0} \colon \, g^{n} = 1 \}.
\end{displaymath}

\begin{lem}\label{lemma:tor.dense} Let $E$ be a countable measure-preserving equivalence relation on a standard probability space. Then $\Tor([E])$ is dense in $[E]$. \end{lem}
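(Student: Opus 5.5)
Given $T \in [E]$ and $\epsilon > 0$, I will construct a periodic $S \in [E]$ with $d_{\mu}(S,T) \leq \epsilon$. Since $d_{\mu}$ only measures the set where $S$ and $T$ differ, it suffices to find a measurable set of measure at most $\epsilon$ outside of which $S$ agrees with $T$, with $S^{n} = \id$ for some $n$. Throughout, $T$ is replaced by a Borel representative: a Borel bijection whose graph lies in $E$ everywhere after discarding an invariant null set.

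\textbf{Step 1: split into periodic and aperiodic parts.} Let $P_{k} \defeq \{\omega \mid T^{k}\omega = \omega,\ T^{j}\omega \neq \omega \text{ for } 0<j<k\}$ for $k \geq 1$, and let $P_{\infty}$ be the set of aperiodic points. These sets are $T$-invariant and partition $\Omega$. Choose $K$ with $\mu\bigl(\bigcup_{k>K} P_{k}\bigr) \leq \epsilon/3$. On $\bigcup_{k \leq K} P_{k}$ I keep $S \defeq T$; this part satisfies $S^{K!} = \id$.

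\textbf{Step 2: the aperiodic part.} On $P_{\infty}$, apply the Rokhlin lemma for the aperiodic measure-preserving map $T|_{P_{\infty}}$ with $n$ chosen so that $1/n \leq \epsilon/3$. This yields a Borel base $B \subseteq P_{\infty}$ such that $B, TB, \dots, T^{n-1}B$ are pairwise disjoint and their union covers all of $P_{\infty}$ except a set of measure at most $\epsilon/3$. Define $S \defeq T$ on $B \cup TB \cup \dots \cup T^{n-2}B$, and $S \defeq T^{-(n-1)}$ on $T^{n-1}B$. On the remainder of $P_{\infty}$, and on $\bigcup_{k>K}P_{k}$, set $S \defeq \id$.

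\textbf{Step 3: verification.} $S$ is a measure-preserving bijection, since it permutes the pieces of a measurable partition. Its graph lies in $E$, because each value $S\omega$ is $T^{j}\omega$ for some $j$, and $E$ is invariant under $T$. The map $S$ is periodic with $S^{\,n\cdot K!} = \id$, so $S \in \Tor([E])$. It differs from $T$ only on $T^{n-1}B$, the Rokhlin remainder, and $\bigcup_{k>K}P_{k}$. Hence $d_{\mu}(S,T) \leq 1/n + \epsilon/3 + \epsilon/3 \leq \epsilon$, using $\mu(T^{n-1}B) = \mu(B) \leq 1/n$.

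\textbf{Main obstacle.} The only nontrivial input is the Rokhlin lemma for aperiodic maps that are not assumed ergodic. This is standard (e.g.\ Kechris, \emph{Global aspects of ergodic group actions}), and I would cite it rather than reprove it. If needed, it can be obtained from a Borel marker set of small measure with return times bounded below by $n$, cutting each return-time column into blocks of length $n$. The remaining steps are routine measurability checks.
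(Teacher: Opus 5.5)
Your proof is correct. The periodic/aperiodic splitting, the Rokhlin tower on the aperiodic part with the top level sent back to the base, the bound $S^{n\cdot K!}=\id$, and the estimate $d_{\mu}(S,T) \leq 1/n + 2\epsilon/3 \leq \epsilon$ all check out, and the Rokhlin lemma for aperiodic (not necessarily ergodic) transformations is a legitimate standard citation.

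The paper gives no argument of its own here; it only defers to the proof of 494C(c) in Fremlin's treatise. That argument (as I recall it, not checked against the text) uses the same ingredients: a periodic/aperiodic decomposition and the Halmos--Rokhlin--Kakutani lemma, in measure-algebra language. So your write-up is in effect a self-contained version of the standard argument behind the cited reference.
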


\begin{proof} This follows from~\cite[Proof of 494C(c)]{FremlinV4II}. \end{proof}

\begin{lem}\label{lemma:torsion} Let $E$ be a countable measure-preserving equivalence relation on a non-atomic standard probability space. If $T \in \Tor([E])$, then there exists a continuous homomorphism $\phi \colon L^{0}([0,1],\lambda;\Z) \to [E]$ such that $T = \phi(\mathbf{1})$, where $\mathbf{1} \in L^{0}([0,1],\lambda;\Z)$ denotes (the $\lambda$-equivalence class of) the constant function of value $1 \in \Z$. \end{lem}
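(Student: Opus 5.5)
The idea is to decompose $\Omega$ according to the orbit structure of $T$ and to construct $\phi$ as a ``measurable power map'' $f \mapsto T^{f}$. The one subtlety is that a single exponent $f(\omega)$ does not define a map on $\Omega$; this is handled by first making $f$ constant along $T$-orbits, via a fundamental domain.

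Let $T \in \Tor([E])$ with $T^{n} = 1$ for some $n \in \N_{>0}$. For $k \in \{1,\ldots,n\}$ let $\Omega_{k}$ denote the measurable, $T$-invariant set of points whose $T$-orbit has exactly $k$ elements; these sets partition $\Omega$ up to a null set. Standard arguments (e.g.\ choosing, via a Borel linear order on $\Omega$, the least point of each finite orbit) give a measurable fundamental domain $D_{k} \subseteq \Omega_{k}$. Thus $\Omega_{k} = \bigsqcup_{j=0}^{k-1} T^{j}D_{k}$, and $\mu(\Omega_{k}) = k\,\mu(D_{k})$. Set $D \defeq \bigcup_{k} D_{k}$ and let $r \colon \Omega \to D$ send each point to the representative of its orbit.

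Next we fix a measurable map $\psi \colon \Omega \to [0,1]$ such that $\psi_{\ast}(\mu\vert_{D})$ is absolutely continuous with respect to $\lambda$. Since $\mu$ is non-atomic, $\mu\vert_{D}$ is a non-atomic finite measure on a standard Borel space, so such a $\psi$ exists, for instance an injective Borel map with image of measure-class absolutely continuous (after normalising, a measure isomorphism onto an interval). For $f \in L^{0}([0,1],\lambda;\Z)$ we define
\begin{displaymath}
	\phi(f)(\omega) \, \defeq \, T^{f(\psi(r(\omega)))}(\omega) .
\end{displaymath}
Absolute continuity ensures that $\phi(f)$ is well defined on $\lambda$-classes. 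On each orbit, $\phi(f)$ acts as a fixed power of $T$, so it is a bijection of the orbit and preserves $\mu$. Indeed, on $\{ f\circ\psi\circ r = m \}$, which is a $T$-invariant set, it coincides with $T^{m}$. Its graph lies in $E$, hence $\phi(f) \in [E]$. Because the exponent is constant on orbits, $\phi(f+g) = \phi(f)\phi(g)$, so $\phi$ is a homomorphism. Finally, $\phi(\mathbf{1}) = T$.

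It remains to check continuity, and this is the main point. Since $\phi$ is a homomorphism, it suffices to check continuity at $0$. Let $f_{i} \to 0$ in measure, i.e.\ $\lambda(\{f_{i} \neq 0\}) \to 0$, using the $\{0,1\}$-metric from Remark~\ref{remark:L0.identification}. Then
\begin{displaymath}
	d_{\mu}(\phi(f_{i}),\id) \, \leq \, \mu\bigl(r^{-1}(\psi^{-1}\{f_{i} \neq 0\})\bigr) \, \leq \, n\,\mu\bigl(D \cap \psi^{-1}\{f_{i}\neq 0\}\bigr) ,
\end{displaymath}
since each point of $D$ represents at most $n$ points of its orbit. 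The right-hand side tends to $0$ because $\psi_{\ast}(\mu\vert_{D}) \ll \lambda$, by the absolute continuity of a finite measure in the $\epsilon$--$\delta$ form.

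The only genuinely delicate step is the measurable choice of the fundamental domain $D$ together with the transfer map $\psi$. This is handled by the standard Borel selection for finite equivalence relations and the isomorphism theorem for non-atomic measures. If $\mu(D)=0$ were to occur, it is impossible since $\mu(D) \geq 1/n$; and if $D$ had atoms, this is ruled out by non-atomicity of $\mu$.
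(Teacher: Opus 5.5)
Your proof is correct, but it takes a genuinely different route from the paper. (The routine step of passing to a $T$-invariant conull set on which a representative satisfies $T^{n}=\id$ everywhere is implicit in your setup.)

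\textbf{The paper's route.} It invokes the Kittrell--Tsankov lemma to obtain an increasing family $(B(t))_{t \in [0,1]}$ of $T$-invariant sets with $\mu(B(t)) = t$. It then defines $\phi$ only on the subgroup $S$ of step functions, setting $\phi(\sigma_{t,z}) = T^{z_{k}}$ on $B(t_{k}) \setminus B(t_{k-1})$, and checks that this is a $1$-Lipschitz homomorphism. Finally it extends $\phi$ to all of $L^{0}([0,1],\lambda;\Z)$, using density of $S$ and Ra\u{\i}kov completeness of $[E]$.

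\textbf{Your route.} You write down $\phi(f) = T^{f \circ \psi \circ r}$ directly on the whole group. Because the exponent is constant on $T$-orbits, several things are immediate:
\begin{enumerate}
\item the homomorphism property;
\item membership in $[E]$;
\item well-definedness on $\lambda$-classes, via $\mu(r^{-1}(A)) \leq n\,\mu(A)$ for $A \subseteq D$ together with $\psi_{\ast}(\mu\vert_{D}) \ll \lambda$;
\item continuity.
\end{enumerate}
So you need neither density of step functions nor completeness of $[E]$. The price is that you do the measurable bookkeeping (Borel transversal, isomorphism theorem for non-atomic measures) yourself rather than outsourcing it to Kittrell--Tsankov.

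\textbf{How the two relate.} The constructions are essentially the same. Choose $\psi$ so that it pushes forward to $\lambda$ the measure $A \mapsto \mu(r^{-1}(A))$ on $D$; this measure is non-atomic of total mass $1$, so such a $\psi$ exists. Then:
\begin{enumerate}
\item $B(t) \defeq r^{-1}(\psi^{-1}([0,t]))$ is exactly a chain as in the Kittrell--Tsankov lemma;
\item your $\phi$ agrees with the paper's on step functions;
\item your continuity estimate sharpens to the paper's $1$-Lipschitz bound $d_{\mu}(\phi(f),\phi(g)) \leq \lambda(\{f \neq g\})$.
\end{enumerate}
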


\begin{proof} We start off by introducing an auxiliary object. If $n \in \N_{>0}$, then we let \begin{displaymath}
	I_{n} \, \defeq \, \left. \! \left\{ (t_{0},\ldots,t_{n}) \in [0,1]^{n+1} \, \right\vert 0 = t_{0} < \ldots < t_{n} = 1 \right\}
\end{displaymath} and define, for each $(t,z) \in I_{n} \times \Z^{n}$, an element $\sigma^{t,z} \in L^{0}([0,1],\lambda;\Z)$ by setting \begin{displaymath}
	\sigma_{t,z}(r) \, \defeq \, z_{k} \qquad (r \in [t_{k-1},t_{k}), \, k \in \{ 1,\ldots,n\}) .
\end{displaymath} Note that $\left(\bigcup_{n=1}^{\infty}I_{n},{\preceq}\right)$ is a directed set, where \begin{displaymath}
	t \preceq t' \ \ \ :\Longleftrightarrow \ \ \ \forall k' \in \{ 1,\ldots,n' \} \ \exists k \in \{ 1,\ldots,n \} \colon \ \, t_{k-1} \leq t'_{k'-1} \leq t'_{k'} \leq t_{k}
\end{displaymath} whenever $t \in I_{n}$, $t' \in I_{n'}$ and $n,n' \in \N_{>0}$. One readily checks that, for any $n,n' \in \N_{>0}$ and $t \in I_{n}$, $t' \in I_{n'}$ with $t \preceq t'$, there is a unique map $p_{t,t'} \colon \{ 1,\ldots,n'\} \to \{ 1,\ldots,n \}$ such that \begin{displaymath}
	\forall k' \in \{ 1,\ldots,n' \} \colon \qquad t_{p_{t,t'}(k')-1} \, \leq \, t'_{k'-1} \, \leq \, t'_{k'} \, \leq \, t_{p_{t,t'}(k')} ;
\end{displaymath} and, moreover, \begin{equation}\label{eq:refinement}
	\forall z \in \Z^{n} \colon \qquad \sigma_{t,z} \, = \, \sigma_{t',z \circ {p_{t,t'}}} .
\end{equation} Using~\eqref{eq:refinement} and \begin{equation}\label{eq:addition}
	\forall n \in \N_{>0} \ \forall z,z' \in \Z^{n} \colon \qquad \sigma_{t,z} + \sigma_{t,z'} \, = \, \sigma_{t,z + z'} ,
\end{equation} we conclude that \begin{displaymath}
	S \, \defeq \, \{ \sigma_{t,z} \mid (t,z) \in I_{n} \times \Z^{n}, \, n \in \N_{>0} \}
\end{displaymath} constitutes a subgroup of $L^{0}([0,1],\lambda;\Z)$.

Now, suppose that $E$ is given on the non-atomic standard probability space $(\Omega,\mu)$. Let $T \in \Tor([E])$. According to~\cite[Lemma 2.2]{KittrellTsankov}, there exists a map $B \colon [0,1] \to \mathcal{B}_{\mu}$ such that \begin{itemize}
	\item[---\,] $B(s) \subseteq B(t)$ for all $s,t \in [0,1]$ with $s \leq t$,
	\item[---\,] $\mu(B(t)) = t$ for every $t \in [0,1]$, and
	\item[---\,] $T(B(t)) = B(t)$ for every $t \in [0,1]$.
\end{itemize} If $n \in \N_{>0}$ and $t \in I_{n}$, then the family $\Omega^{t} \defeq (\Omega^{t}_{k})_{k \in \{ 1,\ldots,n \}}$ defined by \begin{displaymath}
	\Omega^{t}_{k} \, \defeq \, B(t_{k}) \setminus B(t_{k-1}) \qquad (k \in \{1,\ldots,n\})
\end{displaymath} constitutes a $T$-invariant partition of unity in $\mathcal{B}_{\mu}$. Consider the map $\phi \colon S \to [E]$ defined by \begin{displaymath}
	\phi(\sigma_{t,z})\vert_{\Omega^{t}_{k}} \, \defeq \, T^{z_{k}}\vert_{\Omega^{t}_{k}} \qquad (k \in \{1,\dots,n\}).
\end{displaymath} Note that $\phi$ is well defined: indeed, if $n \in \N_{>0}$ and $(t,z) \in I_{n} \times \Z^{n}$, then $T(\Omega^{t}_{k}) = \Omega^{t}_{k}$ and $T^{z_{k}} \in [E]$ for each $k \in \{ 1,\ldots,n \}$, which readily implies that $\phi(\sigma_{t,z}) \in [E]$. Evidently, $\mathbf{1} = \sigma_{(0,1),1} \in S$ and $\phi(\mathbf{1}) = T$.

We claim that $\phi$ is a continuous homomorphism. To see this, let $s,s' \in S$. Due to $\left(\bigcup_{n=1}^{\infty}I_{n},{\preceq}\right)$ being a directed set and~\eqref{eq:refinement}, there exist $n \in \N_{>0}$, $t \in I_{n}$ and $z' \in \Z^{n}$ with $s = \sigma_{t,z}$ and $s' = \sigma_{t,z'}$. For each $k \in \{ 1,\ldots,n\}$, since $T(\Omega^{t}_{k}) = \Omega^{t}_{k}$, we infer that \begin{displaymath}
	\phi(\sigma_{t,z+z'})\vert_{\Omega^{t}_{k}} \, = \, T^{z_{k}+z'_{k}}\vert_{\Omega^{t}_{k}} \, = \, \left(T^{z_{k}}\vert_{\Omega^{t}_{k}}\right)\! \left(T^{z'_{k}}\vert_{\Omega^{t}_{k}}\right) \, = \,  \phi(\sigma_{t,z})\phi(\sigma_{t,z'})\vert_{\Omega^{t}_{k}} .
\end{displaymath} It follows that \begin{displaymath}
	\phi(s+s') \, \stackrel{\eqref{eq:addition}}{=} \, \phi(\sigma_{t,z+z'}) \, = \, \phi(\sigma_{t,z})\phi(\sigma_{t,z'}) \, = \, \phi(s)\phi(s') ,
\end{displaymath} which proves that $\phi$ is a homomorphism. Moreover, note that \begin{displaymath}
	\mu(\Omega^{t}_{k}) \, = \, \mu(B(t_{k})) - \mu(B(t_{k-1})) \, = \, t_{k}-t_{k-1} \, = \, \lambda([t_{k-1},t_{k}))
\end{displaymath} for all $k \in \{ 1,\ldots,n\}$. Considering $K \defeq \{ k \in \{1,\ldots,l\} \mid z_{k} \ne z'_{k} \}$, we deduce that \begin{align*}
	d_{\mu}(\phi(s),\phi(s')) \, &\leq \, \mu\!\left( \bigcup\nolimits_{k \in K} \Omega^{t}_{k} \right) \, = \, \sum\nolimits_{k \in K} \mu(\Omega^{t}_{k}) \\
	& = \, \sum\nolimits_{k \in K} \lambda([t_{k-1},t_{k})) \, = \, \lambda\!\left( \bigcup\nolimits_{k \in K} [t_{k-1},t_{k}) \right) \, \stackrel{\ref{remark:L0.identification}}{=} \, d^0_{\lambda}(s,s') ,
\end{align*} where $d$ denotes the $\{ 0,1 \}$-valued metric on $\Z$. This shows that $\varphi$ is $1$-Lipschitz and, in particular, continuous.

Finally, since $S$ is dense in $L^{0}([0,1],\lambda;\Z)$ by~\cite[Lemma~4.1]{PestovSchneider} and $[E]$ is Ra\u{\i}kov complete by~\cite[Proposition~3.2]{KechrisBook} and Remark~\ref{remark:pestov.uspenskij}, the continuous homomorphism $\phi \colon S \to [E]$ extends to a continuous homomorphism from $L^{0}([0,1],\lambda;\Z)$ to $[E]$ (see, e.g.,~\cite[Proposition~3.6.12]{AT}), which implies the desired conclusion. \end{proof}

\section{Proofs of Theorem~\ref{theorem:examples} and Corollary~\ref{corollary:main}}\label{section:final}

\begin{proof}[Proof of Theorem~\ref{theorem:examples}] \ref{theorem:examples.1} Let $\mathcal{A}$ be a Boolean algebra and let $\mu \colon \mathcal{A} \to \R$ be a diffuse submeasure. According to~\cite[Remark~2.5]{SchneiderSolecki25}, the map $\eta \colon \Z \to S(\mu,\Z)$ defined by \begin{displaymath}
	\eta(i) \colon \, \Z \, \longrightarrow \, \mathcal{A}, \quad j \, \longmapsto \, \begin{cases}
			\, 1 & \text{if } j = i, \\
			\, 0 & \text{otherwise}
		\end{cases} \qquad (i \in \Z)
\end{displaymath} is a homomorphism. Now, let $G$ be a topological group. Due to Lemma~\ref{lemma:inert.persistence}\ref{lemma:inert.persistence.2} (resp., Lemma~\ref{lemma:whirly.persistence}\ref{lemma:whirly.persistence.2}), proving inertness (resp., whirliness) of $L^{0}(\mu,G)$ amounts to establishing the same for $S(\mu,G)$. For the latter, by Lemma~\ref{lemma:inert.persistence}\ref{lemma:inert.persistence.1} (resp., Lemma~\ref{lemma:whirly.persistence}\ref{lemma:whirly.persistence.1}), it suffices to show that every element of $S(\mu,G)$ is contained in an inert (resp., whirly) topological subgroup of $S(\mu,G)$. To this end, let $a \in S(\mu,G)$. Consider the homomorphism $\pi \colon \Z \to S(\mu,G), \, i \mapsto a^{i}$. Then, by~\cite[Lemma~2.7\,+\,Remark~2.6]{SchneiderSolecki25}, there exists a continuous homomorphism $\pi_{\#} \colon S(\mu,\Z) \to S(\mu,G)$ such that $\pi = {\pi_{\#}} \circ \eta$. In turn, \begin{displaymath}
	a \, = \, \pi(1) \, = \, \pi_{\#}(\eta(1)) \, \in \, \pi_{\#}(S(\mu,\Z)) .
\end{displaymath} Since $S(\mu,\Z)$ is inert by Theorem~\ref{theorem:sabok} and Lemma~\ref{lemma:inert.persistence}\ref{lemma:inert.persistence.2} and thus $\pi_{\#}(S(\mu,\Z))$ is inert by Lemma~\ref{lemma:inert.persistence}\ref{lemma:inert.persistence.3}, this completes the argument for inertness. Finally, if $\mu$ is a measure (or, more generally, is non-elliptic in the sense of~\cite[Definition~4.6]{SchneiderSolecki21}), then $S(\mu,\Z)$ is whirly by Theorem~\ref{theorem:L0.amenable} (or by~\cite[Corollary~7.6]{SchneiderSolecki21} in the general case) and Lemma~\ref{lemma:whirly.persistence}\ref{lemma:whirly.persistence.2} and so $\pi_{\#}(S(\mu,\Z))$ is whirly thanks to Lemma~\ref{lemma:whirly.persistence}\ref{lemma:whirly.persistence.3}, which gives the desired conclusion for whirliness.

\ref{theorem:examples.2} Let $M$ be a $\mathrm{II}_{1}$ factor. According to Lemma~\ref{lemma:whirly.persistence}\ref{lemma:whirly.persistence.1} and Lemma~\ref{lemma:inert.persistence}\ref{lemma:inert.persistence.1}, it will be sufficient to verify that every element of $\U(M)$ is contained in a whirly, inert topological subgroup of $\U(M)$. For this purpose, let $u \in \U(M)$. Then the ${}^{\ast}$-subalgebra of $M$ generated by $u$ is abelian, whence a standard application of Zorn's lemma shows that $u \in N$ for some maximal abelian ${}^{\ast}$-subalgebra $N \leq M$. Thanks to Lemma~\ref{lemma:masa}, we find a diffuse probability space $(\Omega,\mathcal{B},\mu)$ such that $L^{\infty}(\Omega,\mathcal{B},\mu;\C) \cong N$. In turn, the topological group \begin{displaymath}
	\U(L^{\infty}(\Omega,\mathcal{B},\mu;\C)) \, \stackrel{\ref{remark:unitary.group.vs.L0}}{=} \, L^{0}(\Omega,\mathcal{B},\mu;\T)
\end{displaymath} is isomorphic to $\U(N)$, and the latter is a topological subgroup of $\U(M)$ (cf.~\cite[Proposition~1.24.5, p.~78]{sakai}). Now, as $u \in \U(N)$ and \begin{displaymath}
	\U(N) \, \cong \, L^{0}(\Omega,\mathcal{B},\mu;\T) \, \stackrel{\ref{remark:L0.identification}}{\cong} \, L^{0}(\mu,\T)
\end{displaymath} is whirly and inert by Theorem~\ref{theorem:L0.amenable}, this completes the argument.

\ref{theorem:examples.3} Let $E$ be a countable measure-preserving equivalence relation on a non-atomic standard probability space. Let $\mathcal{H} \defeq \{ H \leq [E] \mid H \text{ whirly, inert} \}$. Recall that \begin{displaymath}
	L^{0}([0,1],\lambda;\Z) \, \stackrel{\ref{remark:L0.identification}}{\cong} \, L^{0}(\lambda,\Z)
\end{displaymath} is whirly and inert by Theorem~\ref{theorem:L0.amenable}. Hence, Lemma~\ref{lemma:whirly.persistence}\ref{lemma:whirly.persistence.3} and Lemma~\ref{lemma:inert.persistence}\ref{lemma:whirly.persistence.3} together assert that $\im(\phi) \in \mathcal{H}$ for every continuous homomorphism $\phi \colon L^{0}([0,1],\lambda;\Z) \to [E]$. From this and Lemma~\ref{lemma:torsion}, we infer that $\Tor([E]) \subseteq \bigcup\mathcal{H}$.
Since $\Tor([E])$ is dense in $[E]$ by Lemma~\ref{lemma:tor.dense}, this entails density of $\bigcup \mathcal{H}$ in $[E]$.
Thus, $[E]$ is whirly and inert due to Lemma~\ref{lemma:whirly.persistence}\ref{lemma:whirly.persistence.1} and Lemma~\ref{lemma:inert.persistence}\ref{lemma:inert.persistence.1}.

\emph{Alternative argument}. Once again, thanks to Lemma~\ref{lemma:whirly.persistence}\ref{lemma:whirly.persistence.1} and Lemma~\ref{lemma:inert.persistence}\ref{lemma:inert.persistence.1}, it suffices to check that every element of $[E]$ is contained in a whirly, inert topological subgroup of $[E]$. To this end, let $T \in [E]$. Choosing any representative $T_{0}$ of the $\mu$-equivalence class $T$, we see that $R \defeq \bigcup\nolimits_{n \in \Z} \gr(T_{0}^{n})$ is a countable measure-preserving equivalence relation on $(\Omega,\mu)$ and that $R$ is hyperfinite by~\cite[Theorem~1]{dye} (see also~\cite[II.6, Theorem~6.6, p.~19]{KechrisMiller}). In turn, $[R]$ is whirly and inert by Theorem~\ref{theorem:lemaitre}. Since $T \in [R]$ and $[R]$ is a topological subgroup of $[E]$, this yields the desired conclusion. \end{proof}

\begin{remark} The source of whirliness for each of the three examples established in Theorem~\ref{theorem:examples} is the phenomenon of \emph{concentration of measure}, which constitutes a central ingredient in the proofs of Theorem~\ref{theorem:L0.amenable}, \cite[Corollary~7.6]{SchneiderSolecki21}, and Theorem~\ref{theorem:lemaitre}. Due to work of Moore and Solecki~\cite[Theorem~1.1]{MooreSolecki}, the group $\CM(M,\T)$ of all continuous maps from any uncountable compact metric space $M$ to the circle $\T$, equipped with the topology of uniform convergence, possesses a whirly weakly continuous non-trivial near-action on a non-atomic standard probability space; and whirliness does not come from any form of measure concentration on the acting group in this case. Moreover, as $\CM(M,\T)$ admits an injective continuous homomorphism into the compact group $\T^{M}$, we see that the hypothesis in Corollary~\ref{corollary:whirly.vs.locally.compact} cannot be weakened to the existence of a single non-trivial whirly weakly continuous near-action. \end{remark}

\begin{proof}[Proof of Corollary~\ref{corollary:main}] Using Theorem~\ref{theorem:examples} and the material of Section~\ref{section:examples}, we find an example of a non-amenable, unitarily representable, whirly Polish group $G$, namely \begin{itemize}
	\item[---\,] $G = L^{0}(\Omega,\mathcal{B},\nu;H)$ for any non-amenable, unitarily representable Polish group $H$ and any diffuse probability space $(\Omega,\mathcal{B},\nu)$ such that the $\sigma$-algebra $\mathcal{B}$ is countably generated (cf.~Subsection~\ref{subsection:L0.groups}),
	\item[---\,] $G = \U(M)$ for any non-injective $\mathrm{II}_{1}$ factor $M$ with separable predual (cf.~Subsection~\ref{subsection:unitary.groups}), or
	\item[---\,] $G = [E]$ for any non-hyperfinite, ergodic countable measure-preserving equivalence relation $E$ on
		      a non-atomic standard probability space (cf.~Subsection~\ref{subsection:full.groups}).
\end{itemize} According to Theorem~\ref{theorem:construction}, there exist a standard probability space $(\Omega',\mu')$ and a topological group embedding $\theta \colon G \hookrightarrow \Aut(\Omega',\mu')$ such that the induced near-action $G \curvearrowright^{\theta} (\Omega',\mu')$ is ergodic. Ergodicity of $G \curvearrowright^{\theta} (\Omega',\mu')$ implies that either $(\Omega',\mu')$ is non-atomic, or $(\Omega',\mu')$ decomposes into a finite number of atoms of the same measure. However, due to $\Aut(\Omega',\mu')$ containing $\theta(G)$ and thus being infinite, the latter alternative is impossible. Hence, $(\Omega',\mu')$ is non-atomic. Thanks to~\cite[(17.41) Theorem, p.~116]{KechrisBookClassical}, there exists a Borel isomorphism $f \colon \Omega \to \Omega'$ such that $f_{\ast}(\mu) = \mu'$. In turn, \begin{displaymath}
	\iota \colon \, G \, \longrightarrow \, \Aut(\Omega,\mu), \quad g \, \longmapsto \, f \circ {\theta(g)} \circ {f^{-1}}
\end{displaymath} is a topological group embedding and the induced near-action $G \curvearrowright^{\iota} (\Omega,\mu)$ is ergodic, i.e., $\iota(G) \curvearrowright (\Omega,\mu)$ is ergodic. From Remark~\ref{remark:pestov.uspenskij} and Theorem~\ref{theorem:raikov}, we know that $\iota(G)$ is closed in $\Aut(\Omega,\mu)$. Due to the respective properties of the topological group $G$, its isomorphic copy $\iota(G)$ is non-amenable and whirly, where the latter entails whirliness of $\iota(G) \curvearrowright (\Omega,\mu)$ according to Proposition~\ref{proposition:whirly.polish}. \end{proof}

\section*{Acknowledgments}

The second-named author gratefully acknowledges inspiring conversations with Vladimir Pestov, in which he learned about Problem~\ref{problem:pestov} and Gaussian near-actions. Moreover, the authors would like to thank S\l awomir Solecki for his comments on an earlier version of the present manuscript.

\end{document}